\tikzset{
    cross/.pic = {
        \draw[rotate = 45] (-#1,0) -- (#1,0);
        \draw[rotate = 45] (0,-#1) -- (0, #1);
    }
}
\tikzset{
    toideal/.style = {draw=\torange,ultra thick}
}
\tikzset{
    toidealthin/.style = {draw=\torange, very thick}
}
\tikzset{
    pics/coordsys/.style 2 args = {
        code = {
            \node[anchor=north east] (0, 0) {\small 1};
            \foreach \x in {1, ..., #1} {
                \draw[thin,black!15] (\x,-.1) -- (\x,#2+.2);
            }
            \foreach \y in {1, ..., #2} {
                \draw[thin,black!15] (-.1, \y) -- (#1+.2, \y);
            }
            \draw[->] (-.1, 0) -- (#1+.2, 0) node[anchor=west] {$x$};
            \draw[->] (0, -.1) -- (0, #2+.2) node[anchor=south] {$y$};
        }
    }
}
\tikzset{alt double/.style={decorate,decoration=double deco}}
\newcommand\cA{\mathcal{A}}
\newcommand\cB{\mathcal{B}}
\newcommand\cC{\mathcal{C}}
\newcommand\cD{\mathcal{D}}
\newcommand\cF{\mathcal{F}}
\newcommand\cI{\mathcal{I}}
\newcommand\cJ{\mathcal{J}}
\newcommand\cP{\mathcal{P}}
\newcommand\cT{\mathcal{T}}
\newcommand\cV{\mathcal{V}}
\newcommand\CVec{\mathrm{Vec}}
\newcommand\CSet{\mathrm{Set}}
\DeclareMathOperator*\colim{colim}
\newcommand\vfi{\varphi}
\newcommand\bGamma{\mathbf{\Gamma}}
\newcommand\bDelta{\mathbf{\Delta}}
\newcommand\oday{\mathrel{\otimes_{\mathrm{Day}}}}
\newcommand\ohat{\mathrel{\hat{\kern0.6pt\otimes}}}
\newcommand\id{\mathrm{id}}
\newcommand\PSh{\mathrm{PSh}}
\newcommand\Sh{\mathrm{Sh}}
\newcommand\Ind{\mathrm{Ind}}
\newcommand\End{\mathrm{End}}
\newcommand\Ob{\mathrm{Ob}}
\newcommand\op{\mathrm{op}}
\newcommand\DY{\mathrm{DY}}
\newcommand\Hom{\mathrm{Hom}}
\newcommand\hoch{\mathrm{hoch}}
\newcommand\Choch{C_\hoch}
\newcommand\dhoch{d_\hoch}
\newcommand\CDY{C_{\mathrm{DY}}}
\newcommand\Hhoch{\mathit{HH}}
\newcommand\HDY{H_{\mathrm{DY}}}
\newcommand\Fun{\mathrm{Fun}}
\newcommand\ex{\mathrm{ex}}
\newcommand\faith{\mathrm{faith}}
\newcommand\uu{\mathrm{u}}
\newcommand\ps{\mathrm{ps}}
\newcommand\sk{\mathrm{sk}}
\newcommand\one{\mathds{1}}
\newcommand{\N}{\ensuremath{\mathbb N}}
\DeclarePairedDelimiter\ceil{\lceil}{\rceil}
\crefname{subsection}{Subsection}{Subsections} 
\newtheoremstyle{style}
{3pt} 
{3pt} 
{} 
{} 
{\bfseries} 
{.} 
{.5em} 
{} 
\theoremstyle{style}
\newtheorem{thm}{Theorem}[section]
\newtheorem{theorem}[thm]{Theorem}
\newtheorem{proposition}[thm]{Proposition}
\newtheorem*{proposition*}{Proposition}
\newtheorem*{theorem*}{Theorem}
\newtheorem*{ack}{Acknowledgements}
\newtheorem{lemma}[thm]{Lemma}
\newtheorem{setup}[thm]{Setup}
\newtheorem{corollary}[thm]{Corollary}
\theoremstyle{definition}
\newtheorem{definition}[thm]{Definition}
\newtheorem{example}[thm]{Example}
\newtheorem{remark}[thm]{Remark}
\newtheorem{thmx}{Theorem}
\newtheorem{propx}{Proposition}
\newtheorem{thmy}{Theorem}
\title[Invariance Properties of Davydov-Yetter Cohomology]{Invariance Properties of Davydov-Yetter Cohomology}
\author{Peter Mader}
\date{November 8, 2025}
\begin{document}

    \begin{abstract}
        Davydov-Yetter (DY) cohomology is a cohomology theory for
        linear semigroupal (i.e.~monoidal but not necessarily unital)
        categories and functors, measuring deformations of their coherence
        isomorphisms. We show that DY cohomology is invariant under freely
        adjoining a unit object, and under adjoining colimits. This implies
        that constructions such as $\Ind$-completion and monoidal abelian
        envelope do not change the cohomology.
    \end{abstract}

    \maketitle
    \tableofcontents

    \section*{Introduction} \label{s:intro}

Linear monoidal categories and functors come with coherence isomorphisms whose
deformations are classified by Davydov-Yetter (DY) cohomology -- much like
deformations of associative algebras are classified by Hochschild cohomology.
As for algebras, the existence of a unit object is irrelevant for deformations
of the associator and for the the cohomology. Consequently, we will often speak
about \textit{semigroupal} categories and functors to emphasize that we do not
require unitality.

For a $k$-linear semigroupal functor $F: \cC \to \cD$ with coherence isomorphism
\[
    \Phi: F(-) \otimes F(-) \xrightarrow{\;\cong\;} F(- \otimes -),
\]
we denote by $H_\DY^\bullet(F)$ its Davydov-Yetter cohomology, with $\Phi$
understood. The vector space $H_\DY^2(F)$ measures infinitesimal deformations
of $F$. If $F = \id_\cC$ is an identity functor, $H_\DY^3(\cC) \coloneqq
H_\DY^3(\id_\cC)$ measures deformations of the associator of~$\cC$
(see~\cite{davydov1997twisting,craneyetter1996,yetter1997}).

Often, cohomology theories are functorial: A morphism $f: A \to B$ between two
objects (groups, sheaves, \dots) canonically induces a morphism $H^\bullet(B)
\to H^\bullet(A)$ on cohomology, which may be used to compare the cohomology
groups of $A$ and $B$. The construction $H^\bullet(-)$ can be seen as a
contravariant functor.

For Hochschild and Davydov-Yetter cohomology, not every morphism $f$ induces a
sensible morphism between cohomology groups. \Cref{s:preliminaries} discusses
for which morphisms this is possible, building on ideals
in~\cite{davydovcenter,Davydov2022DeformationCO}: A morphism between two
$k$-linear semigroupal functors $F: \cC_1 \to \cD_1, G: \cC_2 \to \cD_2$ is a
commutative diagram of semigroupal categories
\begin{equation}
    \begin{tikzcd}
        \cC_1 \arrow[swap]{d}{F} \arrow{r}{S} & \cC_2 \arrow{d}{G} \\
        \cD_1 \arrow[swap]{r}{T} & \cD_2,
    \end{tikzcd}
\end{equation}
with the additional requirement that $T$ be fully faithful on endomorphisms.
Such pairs $(S, T)$ indeed induce a morphism of complexes $C_\DY^\bullet(G) \to
C_\DY^\bullet(F)$ in a functorial way. Specializing $F$ and $G$ to be identity
functors, we obtain the following functoriality result:

\begin{propx}[\Cref{c:functoriality-cat}]
    For each $n \in \N_+$, DY cohomology induces a functor
    \[
        \left\{
            \text{\parbox[m]{5.7cm}{$k$-linear semigroupal categories with
            $k$-linear semigroupal functors fully faithful on endomorphisms}}
        \right\} \longrightarrow \CVec_k^\op.
    \]
\end{propx}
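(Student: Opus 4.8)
The plan is to derive the statement from the chain-level functoriality recorded in \Cref{s:preliminaries}: a commutative square of $k$-linear semigroupal functors — that is, functors $F\colon\cC_1\to\cD_1$, $G\colon\cC_2\to\cD_2$, $S\colon\cC_1\to\cC_2$, $T\colon\cD_1\to\cD_2$ with $G\circ S=T\circ F$ and $T$ fully faithful on endomorphisms — induces a morphism of complexes $C_\DY^\bullet(G)\to C_\DY^\bullet(F)$, and this assignment is functorial in the square $(S,T)$. First I would fix the source category $\cE$: its objects are $k$-linear semigroupal categories and its morphisms are $k$-linear semigroupal functors that are fully faithful on endomorphisms. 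Checking that $\cE$ is a category takes only a line: the identity functor is fully faithful on endomorphisms; if $S,S'$ are semigroupal and fully faithful on endomorphisms then so is $S'\circ S$, carrying the usual composite coherence isomorphism, because the induced map $\End(x)\to\End(S'Sx)$ is a composite of bijections; associativity and unitality are inherited from composition of functors.

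Then I would specialize the square to identity functors. Taking $F=\id_{\cC_1}$ and $G=\id_{\cC_2}$ forces $\cD_1=\cC_1$ and $\cD_2=\cC_2$, and the constraint $G\circ S=T\circ F$ collapses to $S=T$ as semigroupal functors (the coherence datum of an identity functor being trivial). Hence a morphism of semigroupal functors $\id_{\cC_1}\to\id_{\cC_2}$ is exactly one $k$-linear semigroupal functor $S$ that is fully faithful on endomorphisms — precisely a morphism of $\cE$. Feeding this into the construction of \Cref{s:preliminaries} produces a chain map $S^{\ast}\colon C_\DY^\bullet(\cC_2)=C_\DY^\bullet(\id_{\cC_2})\to C_\DY^\bullet(\id_{\cC_1})=C_\DY^\bullet(\cC_1)$, and post-composing with the degree-$n$ cohomology functor $H^n\colon\sCh\to\CVec_k$ gives $H_\DY^n(S)\colon H_\DY^n(\cC_2)\to H_\DY^n(\cC_1)$.

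Finally I would check the functor axioms for $\cC\mapsto H_\DY^n(\cC)$, $S\mapsto H_\DY^n(S)$. The square $(\id_\cC,\id_\cC)$ is the identity endomorphism of $\id_\cC$ in the source of the chain-level construction, so by functoriality it induces the identity chain map, whence $H_\DY^n(\id_\cC)=\id$. For composability, the composite of the square attached to $S\colon\cC_1\to\cC_2$ with the square attached to $S'\colon\cC_2\to\cC_3$ is the square attached to $S'\circ S$; granting this, functoriality of the chain-level assignment yields $(S'\circ S)^{\ast}=S^{\ast}\circ(S')^{\ast}$, and applying the functor $H^n$ preserves the identity, so $H_\DY^n(S'\circ S)=H_\DY^n(S)\circ H_\DY^n(S')$. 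This order reversal is exactly what it means for $H_\DY^n$ to be a functor $\cE\to\CVec_k^\op$, for each $n\in\N_+$.

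Since the real content — that $(S,T)$ induces a chain map and that this is functorial — lives in \Cref{s:preliminaries} and is invoked as a black box, the only genuine work here is bookkeeping around the specialization: verifying that the composition law for morphisms of semigroupal functors restricts, on the sub-collection of identity functors, to ordinary composition of semigroupal functors, and that the trivial coherence isomorphism of $\id_\cC$ is what the general construction takes as input without adjustment. I expect this matching to be the main (indeed the only) obstacle; once it is in place the proposition is formal.
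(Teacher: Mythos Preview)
Your proposal is correct and follows essentially the same route as the paper: the paper states \Cref{c:functoriality-cat} as an immediate corollary of \Cref{p:functoriality-square} obtained by restricting the category $\cF$ to the subcategory where $F$ and $G$ are identity functors, which is precisely the specialization you carry out. Your write-up is in fact more explicit than the paper's, spelling out the bookkeeping (that $S=T$ is forced, that composites of functors fully faithful on endomorphisms remain so, and that passing from chain maps to cohomology preserves functoriality) which the paper leaves implicit.
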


\Cref{s:unit} discusses invariance of DY cohomology under adjoining a formal
unit object to a semigroupal category. As observed by Hochschild
in \cite[\S~2]{hochschild}, Hochschild cohomology is invariant under
unitalization, i.e.~the adjunction of a formal unit element. In order to prove
an analogous statement for DY cohomology, we recall Hochschild's argument for
algebras and then transfer it to the case of semigroupal categories.

A unitalization $\cC^\uu$ of a $k$-linear semigroupal category $\cC$ has an
additional object~$\one$ such that $\Hom_{\cC^\uu}(\one, X) = \{0\}$ and
$\Hom_{\cC^\uu}(X, \one) = \{0\}$ for all $X \in \cC$.

\begin{thmx}[\Cref{p:hdy-unitalization}] \label{thm1}
    Let $\cC^\uu$ be a unitalization of a small $k$-linear semigroupal category
    $\cC$. Assume that $\cC$ is semigroupally equivalent to a strict skeletal
    semigroupal category. Then the morphism $\HDY^n(\cC^\uu) \to \HDY^n(\cC)$
    induced by $\iota: \cC \hookrightarrow \cC^\uu$ is an isomorphism for all
    $n \geqslant 1$.
\end{thmx}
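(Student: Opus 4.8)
The plan is to transcribe Hochschild's normalization argument to DY cochains. Write $C_\DY^\bullet$ for the DY cochain complex, so that $C_\DY^n(\cC)$ is the space of natural transformations of the $n$-fold tensor product functor $(X_1,\dots,X_n)\mapsto X_1\otimes\cdots\otimes X_n$ to itself, with differential $d=\sum_{i=0}^{n+1}(-1)^i d^i$ where $(d^0\alpha)_{X_1,\dots,X_{n+1}}=\id_{X_1}\otimes\alpha_{X_2,\dots,X_{n+1}}$, $(d^{n+1}\alpha)_{X_1,\dots,X_{n+1}}=\alpha_{X_1,\dots,X_n}\otimes\id_{X_{n+1}}$, and $d^i$ for $1\le i\le n$ merges the $i$-th and $(i{+}1)$-st inputs along $\otimes$. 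Since $\iota\colon\cC\hookrightarrow\cC^\uu$ is strict monoidal and fully faithful, the map in the statement is $H^n$ of the restriction chain map $r\colon C_\DY^\bullet(\cC^\uu)\to C_\DY^\bullet(\cC)$, $\alpha\mapsto(\alpha_{X_1,\dots,X_n})_{X_i\in\cC}$; so it suffices to show that $r$ is a quasi-isomorphism in degrees $\ge 1$.

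First I would reduce to the strict case. By \Cref{c:functoriality-cat} (together with the invariance of the induced cohomology map under semigroupal natural isomorphism) DY cohomology is invariant under semigroupal equivalence, and unitalization carries a semigroupal equivalence $\cC\simeq\cC'$ to a semigroupal equivalence of their unitalizations, compatibly with the two inclusions $\iota$; so, using the hypothesis, we may assume that $\cC$ is strict and that $\one$ is a strict unit of $\cC^\uu$, while $\Hom_{\cC^\uu}(\one,X)=\Hom_{\cC^\uu}(X,\one)=0$ for $X\in\cC$. In this strict situation $n\mapsto C_\DY^n(\cC^\uu)$ underlies a cosimplicial $k$-vector space: the cofaces are the $d^i$ above, and the codegeneracies $s^i\colon C_\DY^{n}(\cC^\uu)\to C_\DY^{n-1}(\cC^\uu)$ insert the strict unit into the $(i{+}1)$-st slot, $(s^i\alpha)_{Y_1,\dots,Y_{n-1}}=\alpha_{Y_1,\dots,Y_i,\one,Y_{i+1},\dots,Y_{n-1}}$, and the associated alternating-sum cochain complex is $C_\DY^\bullet(\cC^\uu)$. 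Verifying the cosimplicial identities is the one genuinely computational step; it is where strictness of $\one$ enters, and it is the exact analogue of the corresponding verification for the Hochschild cochain complex of the unitalization of an algebra, with $\otimes$ in the role of multiplication and $\id\otimes(-)$, $(-)\otimes\id$ in the role of the regular bimodule actions.

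Next I would exploit conormalization. Let $\bar C^n\subseteq C_\DY^n(\cC^\uu)$ consist of the cochains $\alpha$ with $\alpha_{Y_1,\dots,Y_n}=0$ whenever some $Y_i=\one$; then $\bar C^n=\bigcap_i\ker s^i$ is the conormalized subcomplex of the above cosimplicial object, so by the normalization theorem for cosimplicial objects (dual Dold--Kan; concretely, via the explicit contracting homotopy on the degenerate quotient complex that directly transcribes Hochschild's homotopy) the inclusion $\bar C^\bullet\hookrightarrow C_\DY^\bullet(\cC^\uu)$ is a quasi-isomorphism. On the other hand $r$ restricts to an isomorphism $\bar C^n\xrightarrow{\ \cong\ }C_\DY^n(\cC)$ in every degree $n\ge 1$: it is a chain map; it is injective, since an element of $\bar C^n\cap\ker r$ vanishes on all object tuples of $\cC^\uu$ (on those containing $\one$ by the $\bar C^n$-condition, on those from $\cC$ by the $\ker r$-condition); and it is surjective, since any $g\in C_\DY^n(\cC)$ extends to a natural transformation on $\cC^\uu$ by declaring it $0$ on every tuple containing $\one$ — the extension is natural because $\Hom_{\cC^\uu}(\one,X)=\Hom_{\cC^\uu}(X,\one)=0$ and $\otimes$ is $k$-bilinear, so every naturality square involving $\one$ reduces to $0=0$.

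Composing these, $r\circ\bigl(\bar C^\bullet\hookrightarrow C_\DY^\bullet(\cC^\uu)\bigr)$ equals the isomorphism $\bar C^\bullet\xrightarrow{\cong}C_\DY^\bullet(\cC)$ in degrees $\ge 1$, so $r$ is a quasi-isomorphism there and induces $\HDY^n(\cC^\uu)\xrightarrow{\cong}\HDY^n(\cC)$ for all $n\ge 1$. Degree $0$ is genuinely different — $\bar C^0=C_\DY^0(\cC^\uu)=\End_{\cC^\uu}(\one)=k$, while the DY complex of the non-unital $\cC$ has no $0$-cochains — which is exactly why the statement is restricted to $n\ge 1$; note also that the extra differential $\bar C^0\to\bar C^1$ vanishes (both cofaces send $\lambda\,\id_\one$ to $\lambda\,\id_{(-)}$), so no spurious coboundaries are introduced in degree $1$. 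The main obstacle will be the cosimplicial-identity check of the second paragraph — i.e.\ showing that Hochschild's normalization homotopy transcribes verbatim to the DY setting — everything else being formal bookkeeping made routine by the strict-unit hypothesis and the vanishing of $\Hom$'s into and out of $\one$.
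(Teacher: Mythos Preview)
Your proof is correct and shares the paper's core idea (Hochschild-style normalization), but the execution differs. The paper reduces to the strict skeletal case so as to form the semigroup $S=\Ob(\cC)$ and embed $\CDY^\bullet(\cC^\uu)$ into the Hochschild complex $\Choch^\bullet(k[S^\uu],M)$ as a subcomplex that is ``stable under evaluation and multiplication'' (\Cref{l:cdy-stable}); it then tracks Hochschild's explicit inductive homotopy (\Cref{l:cohomologous-to-1}, \Cref{l:stable-cohomologous-to-1}) through that subcomplex. You instead recognize $\CDY^\bullet(\cC^\uu)$ directly as a cosimplicial $k$-vector space (codegeneracies insert the strict unit) and invoke conormalization/dual Dold--Kan. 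Your route is cleaner and avoids the auxiliary Hochschild embedding altogether, making transparent that the strict unit is precisely what forces $s^jd^j=s^jd^{j+1}=\id$; the paper's route, in turn, keeps the link to Hochschild's original argument explicit and self-contained. Two small points: in your reduction you should also take $\cC^\uu$ skeletal (the hypothesis allows it), so that $\Ob(\cC^\uu)=\Ob(\cC)\sqcup\{\one\}$ and your bijectivity check for $r|_{\bar C^\bullet}$ does not have to handle objects isomorphic but unequal to $\one$; and the equality $\End_{\cC^\uu}(\one)=k$ is not part of \Cref{d:unitalization-cat}, which leaves $\End(\one)$ free --- your claim that the differential $\bar C^0\to\bar C^1$ vanishes is nevertheless correct in general, since $\id_X\otimes f=f\otimes\id_X$ for every $f\in\End(\one)$ in any monoidal category.
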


In case all non-zero morphisms in a category are endomorphisms, \Cref{thm1}
implies that the unit object can be ignored when computing the DY cohomology.
This simplification is used tacitly in \cite[\S~3]{Davydov2022DeformationCO}.

\Cref{s:limits} derives a different kind of invariance result: Many completions
or envelopes of linear monoidal categories (additive envelope, pseudo-abelian
envelope, Ind-completion, free cocompletion) do not
change the DY cohomology. More precisely:

\begin{thmx}[\Cref{t:colimit-invariant}]
    Consider a commutative diagram of $k$-linear semigroupal categories
    \[\begin{tikzcd}
        \cC_0 \arrow{r}{F_0} \arrow[hook]{d} \arrow[bend right=50,swap]{dd}{Y_{\cC_0}} &[1.3cm] \cD_0 \arrow[hook,swap]{d} \arrow[bend left=50]{dd}{Y_{\cD_0}} \\
        \cC \arrow{r}{F} \arrow[hook]{d} & \cD \arrow[hook,swap]{d} \\
        \PSh(\cC_0) \arrow{r}{\PSh(F_0)} & \PSh(\cD_0),
    \end{tikzcd}\]
    where $Y$ denotes the Yoneda embedding and $\hookrightarrow$ denotes fully
    faithful functors. Then we have an isomorphism of complexes
    $C_\DY^\bullet(F) \to C_\DY^\bullet(F_0)$.
\end{thmx}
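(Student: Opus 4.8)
The plan is to route both complexes through $C_\DY^\bullet(\PSh(F_0))$, exploiting that $\PSh(F_0)$ is cocontinuous (colimit-preserving) and that the three rows are linked by fully faithful semigroupal functors factoring the Yoneda embeddings. Recall from the discussion preceding \Cref{c:functoriality-cat} that a commuting square of semigroupal categories whose lower functor is fully faithful on endomorphisms is a morphism of semigroupal functors and induces, contravariantly, a morphism of DY complexes. Here the square $\bigl(\cC\hookrightarrow\PSh(\cC_0),\ \cD\hookrightarrow\PSh(\cD_0)\bigr)$ is a morphism $F\to\PSh(F_0)$ (its commutativity and the full faithfulness of the lower inclusion are hypotheses), and $(Y_{\cC_0},Y_{\cD_0})$ is a morphism $F_0\to\PSh(F_0)$. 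These induce chain maps $\rho\colon C_\DY^\bullet(\PSh(F_0))\to C_\DY^\bullet(F)$ and $\rho_0\colon C_\DY^\bullet(\PSh(F_0))\to C_\DY^\bullet(F_0)$ which, unwinding the construction of the functoriality, are restriction maps: precomposition of natural transformations with $\bigl(\cC\hookrightarrow\PSh(\cC_0)\bigr)^{\times n}$, resp.\ with $Y_{\cC_0}^{\times n}$, followed by the canonical identifications supplied by the commutativity of the diagram and the coherence isomorphisms. Since $\cC_0\hookrightarrow\cC\hookrightarrow\PSh(\cC_0)$ equals $Y_{\cC_0}$ and likewise for $\cD$, the morphism $F_0\to\PSh(F_0)$ is the composite $F_0\to F\to\PSh(F_0)$, whence $\rho_0=r\circ\rho$, where $r\colon C_\DY^\bullet(F)\to C_\DY^\bullet(F_0)$ is the restriction map of the theorem. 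So it suffices to prove that $\rho$ and $\rho_0$ are isomorphisms; then $r=\rho_0\circ\rho^{-1}$ is one.

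\textbf{The key lemma.} In each cohomological degree $C_\DY^n(\PSh(F_0))$ is a space of natural transformations between functors $\PSh(\cC_0)^{\times n}\to\PSh(\cD_0)$ built from $\PSh(F_0)$ and the iterated tensor product on the presheaf categories, which is Day convolution (under which the Yoneda embeddings are semigroupal). Now $\PSh(F_0)$ is a left adjoint, hence cocontinuous, and Day convolution preserves colimits in each variable; therefore these functors are cocontinuous in each variable separately. The key lemma is: for any full semigroupal subcategory $\cB\subseteq\PSh(\cC_0)$ containing all representables, restriction along $\cB^{\times n}\hookrightarrow\PSh(\cC_0)^{\times n}$ is a bijection from the natural transformations between these two functors to the natural transformations between their restrictions to $\cB^{\times n}$. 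For the proof, write each presheaf as the canonical colimit of representables over its category of elements; if $X_i\in\cB$ then the cocone maps $Y_{\cC_0}(d)\to X_i$ again lie in $\cB$ since $\cB$ is full, so by separate cocontinuity each of the two functors sends $(X_1,\dots,X_n)$ to the colimit, over the product of the categories of elements, of its values on tuples of representables, with jointly epic cocone. Hence a natural transformation over $\cB^{\times n}$ is determined on all of $\cB^{\times n}$ by its restriction to tuples of representables, and this further restriction to representables alone is still injective; conversely $\eta_{(X_1,\dots,X_n)}:=\colim\eta_{(Y_{\cC_0}(d_1),\dots,Y_{\cC_0}(d_n))}$ extends any natural transformation on representables to one on all of $\PSh(\cC_0)^{\times n}$. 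As the composite restriction to representables is a bijection and its second leg is injective, restriction to $\cB^{\times n}$ is a bijection as well.

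\textbf{Conclusion.} It remains to see that $\rho$ and $\rho_0$, under the identifications of $C_\DY^\bullet(F)$ and $C_\DY^\bullet(F_0)$ with natural transformations of the appropriate \emph{restricted} functors, are exactly the key lemma's restriction maps — for $\cB=\cC$, respectively for $\cB=$ the full subcategory of representables. For $F$ this identification uses that $\cD\hookrightarrow\PSh(\cD_0)$ is fully faithful (so postcomposing the functors defining $C_\DY^n(F)$ with it changes nothing), together with the semigroupal coherences of $\cC\hookrightarrow\PSh(\cC_0)$, of $\cD\hookrightarrow\PSh(\cD_0)$, and the commutativity $\PSh(F_0)\circ(\cC\hookrightarrow\PSh(\cC_0))\cong(\cD\hookrightarrow\PSh(\cD_0))\circ F$; for $F_0$ it is analogous. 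Granting this, the key lemma makes $\rho$ and $\rho_0$ bijective in every degree, and they are chain maps by the cited functoriality, hence isomorphisms of complexes; therefore so is $r$.

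\textbf{Main obstacle.} The heart of the matter is the key lemma — that a natural transformation between functors on $\PSh(\cC_0)^{\times n}$ which are cocontinuous in each variable is freely and uniquely determined by its restriction to $n$-tuples of representables, and that this restriction may be performed in two steps through the intermediate full subcategory $\cC$. The remaining work is careful bookkeeping rather than genuinely hard: checking the separate cocontinuity of the functors defining the DY complex of $\PSh(F_0)$, that the density colimits and their cocone maps stay inside $\cB$, naturality of the colimit extension, and — the fiddliest point — tracking the coherence isomorphisms through the functoriality construction to confirm that $\rho$ and $\rho_0$ really are the naive restriction maps; to lighten this last point one may first strictify, replacing the categories involved by equivalent strict skeletal ones as in the proof of \Cref{thm1}.
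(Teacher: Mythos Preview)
Your proposal is correct and follows essentially the same route as the paper: both set up the commutative triangle through $C_\DY^\bullet(\PSh(F_0))$, use density of representables together with separate cocontinuity of $\PSh(F_0)$ and Day convolution to get injectivity, and extend cochains from $\cC_0$ by the colimit-of-components formula to get surjectivity. The only packaging difference is that the paper carries out the extension step inside the strict free cocompletion $\hat{\cC_0}$ (where a morphism is literally a compatible family, so the formula $\hat\eta_{\bGamma,\bDelta}=(\eta_{\bGamma(i),\bDelta(j)})_{i,j}$ is manifestly well-defined), whereas you argue the same colimit extension directly in $\PSh(\cC_0)$ via your key lemma; the underlying idea is identical.
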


In particular, $k$-linear semigroupal subcategories $\cC_0 \subseteq \cC
\subseteq \PSh(\cC_0)$ all have isomorphic cohomologies and therefore
corresponding deformations. For example, if~$\cC$ is abelian with enough
projectives, then $\cC_0$ may be taken to be the full subcategory of projective
objects~(\Cref{x:enough-projectives}). In~\Cref{t:abelian-envelope}, we deduce
that DY cohomology is invariant under those monoidal abelian envelopes of
pseudo-tensor categories which arise from the construction given by Coulembier
in~\cite{Cou}.

\begin{ack}
    The results presented here first appeared in my master's thesis \cite{ma}
    supervised by Catharina Stroppel. I heartily thank her and Johannes Flake
    for their guidance, helpful comments, and feedback during the development
    of these results and the preparation of this paper.
\end{ack}

    \section{Preliminaries} \label{s:preliminaries}

First we recall some notions which we frequently use. The term
\textit{semigroupal category} (i.e.~``monoidal category without a unit'') was
taken from \cite{semigroupal}.

\begin{definition}
    A \textbf{semigroupal category} is a category $\cC$ equipped with a
    bifunctor $\otimes: \cC \times \cC \to \cC$ (the \textbf{semigroupal product})
    and a natural isomorphism
    \[
        a_{XYZ}: X \otimes (Y \otimes Z) \xrightarrow{\;\cong\;} (X \otimes Y)
        \otimes Z, \qquad X, Y, Z \in \cC
    \]
    called the \textbf{associator}, satisfying the pentagon axiom. It is called \textbf{monoidal} if there exists
    an object $\one \in \cC$ and an isomorphism $i: \one \otimes \one \to \one$
    such that the endofunctors $- \otimes \one$ and $\one \otimes -$ are
    equivalences.
\end{definition}

For a detailed account on the pentagon axiom and monoidal categories, see
\cite[\S~2.1]{EGNO}. The unit $(\one, i)$ of a monoidal category is essentially
unique (see \cite[Proposition~2.2.6]{EGNO}).

\begin{definition}
    A \textbf{semigroupal functor} between two semigroupal categories
    $(\cC, \otimes, a)$ and $(\cD, \otimes, b)$ is a functor $F: \cC \to
    \cD$ equipped with a natural isomorphism
    \[
        \Phi_{XY}: F(X) \otimes F(Y) \xrightarrow{\;\cong\;} F(X \otimes Y),
        \qquad X, Y \in \cC
    \]
    satisfying the hexagon axiom (see \cite[\S~2.4]{EGNO}). If $\cC, \cD$ are
    monoidal with unit objects $\one_\cC, \one_\cD$, respectively, then $(F,
    \Phi)$ is called \textbf{monoidal} if $F(\one_\cC) \cong \one_\cD$.
\end{definition}

\begin{definition}
    Let $k$ be a field. A semigroupal category $(\cC, \otimes, a)$ is called
    \textbf{\mbox{$k$-linear} semigroupal} if the category $\cC$ and the functors $(X
    \otimes -), (- \otimes X)$ for each $X \in \cC$ are $k$-linear. A
    semigroupal functor $(F, \Phi)$ is $k$-linear if $F$ is.

    By a \textbf{tensor category} over $k$, we mean an essentially small
    abelian rigid symmetric $k$-linear monoidal category with $\End(\one)
    \cong k$. A \textbf{pseudo-tensor category} is merely pseudo-abelian
    (i.e. additive and idempotent-complete) instead of abelian.
\end{definition}

\subsection{Davydov-Yetter cohomology}

Let $k$ be field and let $F: \cC \to \cD$ be a strict $k$-linear semigroupal
functor between strict $k$-linear semigroupal categories. For~$n \in \N_+$
consider the functor
\[
    F^{\otimes,n}: \cC^{\times n} \longrightarrow \cD, \qquad (X_1, \dots, X_n)
    \longmapsto F(X_1 \otimes \cdots \otimes X_n)
\]
defined on morphisms in the obvious way. An endomorphism of
$F^{\otimes,n}$ is a collection
\[
    \eta_{X_1, \dots, X_n}: F(X_1 \otimes \dots \otimes X_n) \longrightarrow
    F(X_1 \otimes \cdots \otimes X_n), \qquad X_1, \dots, X_n \in \cC
\]
of endomorphisms in $\cD$ which is natural in each of the $X_i$. For $n \in
\N_+$, define the vector space $C_\DY^n(F) \coloneqq \End(F^{\otimes,n})$ and,
for each $i \in \{0, \dots, n+1\}$, the linear map $\partial_i^n: C_\DY^n(F)
\to C_\DY^{n+1}(F)$ given by
\begin{equation} \label{e:coface-dy}
    \partial^n_i(\eta)_{X_0,\dots,X_n} = \begin{cases}
        \id_{F(X_0)} \otimes \eta_{X_1,\dots,X_n} & i = 0 \\
        \eta_{X_1,\dots,X_{i-2},X_{i-1}\otimes X_i,X_{i+1},\dots,X_n} & 1
        \leqslant i \leqslant n \\
        \eta_{X_0,\dots,X_{n-1}} \otimes \id_{F(X_n)} & i = n+1.
    \end{cases}
\end{equation}
These maps satisfy the coface relations
\begin{equation}\label{coface-relations}
    \partial^{n+1}_j\partial^n_i = \partial^{n+1}_i\partial^n_{j-1}, \qquad 0
    \leqslant i < j \leqslant n+2,
\end{equation}
so the collection $C_\DY^\bullet(F)$ is a cochain complex with differentials
\[
    d_\DY^n \coloneqq \sum_{i=0}^{n+1} (-1)^i \partial^n_i\;:\;C_\DY^n(F)
    \xrightarrow{\hphantom{1cm}} C_\DY^{n+1}(F).
\]

\begin{definition} \label{d:cdy}
    The cochain complex $C_\DY^\bullet(F)$ is called the \textbf{Davydov-Yetter
    (cochain) complex of $F$}. Its cohomology is called the
    \textbf{Davydov-Yetter cohomology of $F$} and is denoted by
    $\HDY^\bullet(F)$. In the special case $F = \id_\cC$, we also use the
    notations $\CDY^\bullet(\cC)$ and $\HDY^\bullet(\cC)$, respectively.
\end{definition}

\begin{remark} \label{r:non-strict}
    Davydov-Yetter cohomology as defined by Crane and Yetter
    \cite{craneyetter1996,yetter1997} works just as well for non-strict
    semigroupal categories and functors. We briefly explain how \Cref{d:cdy}
    generalizes to non-strict \mbox{$(\cC, \otimes, a_\cC), (\cD, \otimes,
    a_\cD)$} and $(F, \Phi): \cC \to \cD$.

    Consider first the following example: For objects $X_1, X_2, X_3 \in \cC$,
    there are multiple expressions involving semigroupal products, parentheses
    and the functor~$F$, e.g.~$A \coloneqq F(X_1 \otimes X_2) \otimes F(X_3), B
    \coloneqq (F(X_1) \otimes F(X_2)) \otimes F(X_3)$. Given any morphism $f: A
    \to B$ in $\cC$, we can pre- and postcompose with the associators $a_\cC,
    a_\cD$ and structure isomorphism $\Phi$ and their inverses to obtain the
    morphism $\ceil{f}$ given by
    \[\begin{tikzcd}
        F((X_1 \otimes X_2) \otimes X_3) \arrow[swap]{d}{\Phi^{-1}} & F(X_1) \otimes (F(X_2) \otimes F(X_3))). \\
        F(X_1 \otimes X_2) \otimes F(X_3) \arrow{r}{f} & (F(X_1) \otimes
        F(X_2)) \otimes F(X_3) \arrow[swap]{u}{a_\cD}
    \end{tikzcd}\]
    There are other combinations of $a_\cC, a_\cD, \Phi$ that can be applied to $f$
    to produce a morphism of the same signature as $\ceil{f}$. All such
    combinations coincide by MacLane's coherence theorem
    (\cite[Theorem~2.9.2]{EGNO}; for an elementary proof, see
    \cite[\S~XI.5]{kasselqg}).

    More generally, given $X_1, \dots, X_n \in \cC$, two parenthesizations $A,
    B \in \cD$ of the expression $F(X_1) \otimes \cdots \otimes F(X_n)$ and a
    morphism $f: A \to B$, the \textit{padding} operator~$\ceil{-}$ assigns to
    $f$ the morphism $\ceil{f}$ with signature
    \begin{equation} \label{e:signature-padding}
        F(\underbrace{(\cdots(X_1 \otimes X_2) \otimes
        \cdots) \otimes X_n}_{\text{left parenthesized}}) \to
        \underbrace{F(X_1) \otimes (\cdots \otimes (F(X_{n-1}) \otimes F(X_n))
        \cdots)}_{\text{right parenthesized}}
    \end{equation}
    obtained by applying appropriate associators of $\cC, \cD$ and structure
    isomorphisms~$\Phi$. Now define $\CDY^\bullet(F)$ to be the space of natural
    transformations with signature~\eqref{e:signature-padding}
    and replace the coface maps $\partial_i^n$ defined in~\eqref{e:coface-dy}
    by their padded versions. Again, this yields a complex called the
    \textit{Davydov-Yetter cochain complex} of $F$. If $\cC, \cD, F$ happen to
    be strict, then this construction clearly recovers~\Cref{d:cdy}. Moreover,
    it is easy to see that this complex is preserved under semigroupal
    equivalence and in particular under strictification; this allows us to
    restrict to the strict case in most situations.
\end{remark}

Note that~\Cref{d:cdy} defines $\CDY^n(F)$ and $d_\DY^n$ only in positive
degrees, as do \cite{craneyetter1996,yetter1997}. To extend it to $n=0$, one
should assume that $\cC, \cD, F$ are not only semigroupal but monoidal. Under
this assumption, \cite{EGNO} defines $\cC^{\times 0}$ to be the category with a single
object $*$ and a single morphism $\id_*$. Then $F^{\otimes,0}: \cC^{\times 0}
\to \cD$ is the functor sending $* \mapsto \one_\cD$ and we obtain
\[
    \CDY^0(F) \coloneqq \End(F^{\otimes,0}) \cong \End(\one_\cD)
\]
and
\[
    d_\DY^0: \End(\one_\cD) \to \End(F), \qquad d_\DY^0(f)_X = \id_{F(X)}
    \otimes f - f \otimes \id_{F(X)}.
\]

\subsection{(Non-)Functoriality} \label{ss:functoriality}

Unlike many cohomology theories, Davydov-Yetter cohomology does not immediately
give rise to a functor
\[
    H_\DY^n: \left\{
        \text{\parbox[m]{5.4cm}{$k$-linear semigroupal categories with
        $k$-linear semigroupal functors}}
    \right\} \longrightarrow \CVec_k^\op
\]
because it is not clear how a semigroupal functor induces a map on cohomology.
As noted by Davydov, this is parallel to centers and the Hochschild cohomology
of associative algebras: There is no functor $\text{$k$-Alg} \to
\text{$k$-CAlg}$ sending an algebra to its center. However, the center can seen
as a functor if we modify the codomain category: A morphism of algebras $\vfi:
A \to B$ induces a cospan of algebras
\[\begin{tikzcd}
    Z(A) \arrow{r}{\vfi} & Z_B(\vfi(A)) & Z(B) \arrow[hook']{l}
\end{tikzcd}\]
and $Z(-)$ extends to a functor from $\text{$k$-Alg}$ to an appropriate
category of cospans (see~\cite[\S~4]{davydovcenter}). Moreover, if $\vfi$ is
surjective, then $Z(B) = Z_B(\vfi(A))$ and we indeed obtain a morphism $Z(A)
\to Z(B)$ between the centers.

Similarly, a $k$-linear semigroupal functor $F: \cC \to \cD$ induces a cospan
of complexes $C_\DY^\bullet(\cC) \rightarrow C_\DY^\bullet(F) \leftarrow
C_\DY^\bullet(\cD)$; the first arrow is an isomorphism if $F$ is fully faithful
on endomorphisms (see \cite[\S~2.1]{Davydov2022DeformationCO}). We use these
ideas to formulate a kind of functoriality for $C_\DY^\bullet$ both for
semigroupal categories and semigroupal functors.

%

Consider a commutative square of $k$-linear semigroupal categories and functors
\begin{equation} \label{e:functoriality-square} \tag{$\square$}
    \begin{tikzcd}
        \cC_1 \arrow[swap]{d}{F} \arrow{r}{S} & \cC_2 \arrow{d}{G} \\
        \cD_1 \arrow[swap]{r}{T} & \cD_2,
    \end{tikzcd}
\end{equation}
where $T$ is fully faithful on endomorphisms. This diagram $\square$ induces a
morphism of complexes $\vfi_\square: \CDY^\bullet(G) \to \CDY^\bullet(F)$ in
the following way. We denote the inverses of the bijections $\End_{\cD_1}(X)
\to \End_{\cD_2}(T(X))$ simply by $T^{-1}$. It is easy to check that for $n \in
\N_+$, the linear maps
\begin{equation*} \label{e:functoriality-morphism} \begin{aligned}
    &\CDY^n(G) \longrightarrow \CDY^n(F), \quad &&\eta \longmapsto
    (T^{-1}\eta_{S(X_1),\dots,S(X_n)})_{X_1,\dots,X_n \in \cC_1}, \\
\end{aligned}\end{equation*}
are well-defined and constitute a morphism of complexes $\vfi_\square$. This
construction makes $\CDY^\bullet(-)$ functorial in the following sense: Let
$\cF$ be the category with objects the $k$-linear semigroupal functors and with
morphisms between $F, G$ the squares as in~\eqref{e:functoriality-square}. Composition is
given by placing squares side by side. Then the above implies the
following statement:

\begin{proposition} \label{p:functoriality-square}
    For each $n \in \N_+$, DY cohomology induces a functor $\cF \to \CVec_k^\op$
    sending $F$ to $C_\DY^n(F)$ and a square~\eqref{e:functoriality-square} to
    the morphism~$\vfi_\square$.
\end{proposition}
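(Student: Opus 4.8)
The plan is to verify directly that the assignment $F \mapsto C_\DY^n(F)$, $\square \mapsto \vfi_\square$ satisfies the axioms of a functor $\cF \to \CVec_k^\op$, relying on the fact — asserted in the paragraph preceding the statement — that each $\vfi_\square$ is a well-defined morphism of complexes. Since that well-definedness is taken as given, what remains is purely the bookkeeping of functoriality: preservation of identities and of composition, with the contravariance (hence the $\op$) coming from the fact that a square from $F$ to $G$ produces an arrow $C_\DY^n(G) \to C_\DY^n(F)$ pointing the ``wrong'' way.

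First I would pin down the category $\cF$ precisely. An identity morphism on an object $F \colon \cC \to \cD$ is the square with $S = \id_\cC$ and $T = \id_\cD$ (which is trivially fully faithful on endomorphisms). Given composable squares, one from $F$ to $G$ with horizontal functors $(S, T)$ and one from $G$ to $H$ with horizontal functors $(S', T')$, the composite is the square from $F$ to $H$ with horizontal functors $(S' \circ S, T' \circ T)$; one checks $T' \circ T$ is fully faithful on endomorphisms because a composite of bijections on $\End$-sets is a bijection. Associativity and unitality in $\cF$ are then inherited from associativity and unitality of functor composition, so $\cF$ is indeed a category.

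Next I would check the two functoriality axioms for $\vfi$. For the identity square on $F$, the formula gives $\eta \mapsto (\id_\cD^{-1}\, \eta_{\id_\cC(X_1),\dots,\id_\cC(X_n)})_{X_i} = (\eta_{X_1,\dots,X_n})_{X_i} = \eta$, so $\vfi_{\id_F} = \id_{C_\DY^n(F)}$. For composition, with the composite square as above I would compute
\[
    \bigl(\vfi_{\square} \circ \vfi_{\square'}\bigr)(\eta)_{X_1,\dots,X_n}
    = T^{-1}\bigl((T')^{-1}\eta_{S'(S(X_1)),\dots,S'(S(X_n))}\bigr)
    = (T' \circ T)^{-1}\,\eta_{(S'\circ S)(X_1),\dots,(S'\circ S)(X_n)},
\]
where the last equality uses that the inverse of the composite bijection $\End_{\cD_1}(X) \to \End_{\cD_3}(T'T(X))$ on endomorphism spaces is $T^{-1} \circ (T')^{-1}$ — a formal consequence of the uniqueness of inverses. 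The right-hand side is exactly $\vfi_{\square \circ \square'}(\eta)$, which establishes the composition axiom in $\CVec_k^\op$ (note the order of composition is reversed, as required for a contravariant assignment turned into a covariant functor to the opposite category).

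I do not expect a genuine obstacle here: every step is a routine verification once $\vfi_\square$ is known to be a chain map, and the only mildly delicate point is being careful about the direction of arrows and the identification $(T'T)^{-1} = T^{-1}(T')^{-1}$ of the ``inverse-on-endomorphisms'' operators. If anything needs spelling out, it is a remark that the padded (non-strict) version of $\vfi_\square$ from \Cref{r:non-strict} satisfies the same identities verbatim, since padding is applied uniformly and commutes with the substitution $X_i \mapsto S(X_i)$ and the transfer $T^{-1}$.
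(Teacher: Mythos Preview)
Your proposal is correct and follows the same approach as the paper, namely direct verification that $\vfi_\square$ respects identities and composition. In fact the paper does not include a separate proof at all: it states that the maps $\vfi_\square$ ``are well-defined and constitute a morphism of complexes'' and that ``the above implies'' the proposition, so your write-up simply spells out what the paper leaves implicit.
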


The following special cases arise from restricting $\cF$ to a subcategory where
$T$ or where $F$ and $G$ are the identity functors.

\begin{corollary}
    Fix a $k$-linear semigroupal category $\cD$ and let
    $\text{$k$-\text{SemigrpCat}} / \cD$ be the overcategory with objects the
    $k$-linear semigroupal functors $\cC \to \cD$ and with morphisms
    the commutative triangles of the form
    \[\begin{tikzcd}
        \cC_1 \arrow{rr}{S} \arrow[swap]{rd}{F} && \cC_2 \arrow{ld}{G} \\
        & \cD &
    \end{tikzcd}\]
    For each $n \in \N_+$, DY cohomology induces a functor
    $\text{$k$-\text{SemigrpCat}} / \cD \to \CVec_k^\op$.
\end{corollary}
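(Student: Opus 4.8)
The plan is to realize this corollary as a direct specialization of \Cref{p:functoriality-square}. First I would observe that the overcategory $\text{$k$-SemigrpCat}/\cD$ embeds as a (non-full) subcategory of the category $\cF$ from \Cref{p:functoriality-square}: an object $F\colon \cC \to \cD$ of the overcategory is the same as an object of $\cF$, and a morphism of the overcategory, i.e.\ a commutative triangle with $S\colon \cC_1 \to \cC_2$ over $\cD$, corresponds to the square \eqref{e:functoriality-square} in which the bottom functor $T$ is taken to be $\id_\cD$ and $\cD_1 = \cD_2 = \cD$. Since $\id_\cD$ is certainly fully faithful on endomorphisms, this square is a legitimate morphism in $\cF$, so the assignment is well-defined on objects and morphisms.

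Next I would check that this assignment is functorial, i.e.\ that it respects identities and composition. Identities are clear. For composition, two composable triangles over $\cD$ with top arrows $S\colon \cC_1 \to \cC_2$ and $S'\colon \cC_2 \to \cC_3$ compose to the triangle with top arrow $S' \circ S$; under the embedding into $\cF$ this corresponds to placing side by side the two squares with bottom arrow $\id_\cD$, whose horizontal composite is again a square with top arrow $S' \circ S$ and bottom arrow $\id_\cD \circ \id_\cD = \id_\cD$. Hence the embedding $\text{$k$-SemigrpCat}/\cD \hookrightarrow \cF$ is a functor. Composing it with the functor $\cF \to \CVec_k^\op$ from \Cref{p:functoriality-square} yields the desired functor $\text{$k$-SemigrpCat}/\cD \to \CVec_k^\op$, which sends $F$ to $\CDY^n(F)$ and the triangle to $\vfi_\square$ with $T = \id_\cD$; explicitly, $\vfi_\square(\eta) = (\eta_{S(X_1),\dots,S(X_n)})_{X_i \in \cC_1}$, since $T^{-1}$ is now the identity.

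There is essentially no obstacle here; the only mild point to address is bookkeeping about which square in $\cF$ a given overcategory triangle maps to, and in particular confirming that the condition ``$T$ fully faithful on endomorphisms'' in \eqref{e:functoriality-square} is automatically met by $T = \id_\cD$. One could present this even more tersely by noting that $\text{$k$-SemigrpCat}/\cD$ is isomorphic to the fiber of the functor $\cF \to (k\text{-SemigrpCat})$ sending a square to its bottom arrow $T$ over the object $\id_\cD$ — or more simply, to the subcategory of $\cF$ on those squares with $T = \id_\cD$ — so the corollary is immediate from \Cref{p:functoriality-square}. I would include one sentence making the embedding explicit and then invoke the proposition.
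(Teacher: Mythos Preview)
Your proposal is correct and matches the paper's approach exactly: the paper states (without further proof) that this corollary arises from \Cref{p:functoriality-square} by restricting $\cF$ to the subcategory where $T$ is the identity functor, which is precisely the embedding $\text{$k$-SemigrpCat}/\cD \hookrightarrow \cF$ you describe. Your additional bookkeeping about identities and composition is fine but more detailed than the paper deems necessary.
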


\begin{corollary} \label{c:functoriality-cat}
    For each $n \in \N_+$, DY cohomology induces a functor
    \[
        H_\DY^n: \left\{
            \text{\parbox[m]{5.7cm}{$k$-linear semigroupal categories with
            $k$-linear semigroupal functors fully faithful on endomorphisms}}
        \right\} \longrightarrow \CVec_k^\op.
    \]
\end{corollary}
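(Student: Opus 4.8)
The plan is to realise the category in the statement as a (non-full) subcategory of the category $\cF$ of \Cref{p:functoriality-square} and then restrict the functor constructed there. I would send a $k$-linear semigroupal category $\cC$ to the identity functor $\id_\cC \in \cF$, and a $k$-linear semigroupal functor $S \colon \cC_1 \to \cC_2$ that is fully faithful on endomorphisms to the commutative square~\eqref{e:functoriality-square} in which $F = \id_{\cC_1}$, $G = \id_{\cC_2}$, and both horizontal functors are equal to $S$.

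First I would check that this assignment, call it $E$, is well defined: the square commutes strictly, and the only requirement for it to be a morphism of $\cF$ is that its lower horizontal functor be fully faithful on endomorphisms — here that functor is $S$ itself, which is precisely the hypothesis imposed on morphisms of our source category. Next I would verify that $E$ is functorial: the identity of $\cC$ maps to the identity square on $\id_\cC$, and pasting $E(S)$ and $E(S')$ side by side — which is how composition in $\cF$ is defined — yields the square whose two rows are the composite $S' \circ S$, i.e.\ $E(S' \circ S)$. Along the way I would note that the source category really is a category, since a composite of functors fully faithful on endomorphisms is again such.

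Finally I would invoke \Cref{p:functoriality-square}: for a fixed $n \in \N_+$ it produces a functor $\cF \to \CVec_k^\op$ at the level of the cochain groups $\CDY^n$, implemented by the chain maps $\vfi_\square$. Since a morphism of cochain complexes induces a morphism on $H^n$ functorially, this upgrades to a functor $\HDY^n \colon \cF \to \CVec_k^\op$, whose value on $\id_\cC$ is $\HDY^n(\id_\cC) = \HDY^n(\cC)$ by \Cref{d:cdy}. The composite $\HDY^n \circ E$ is then the desired functor; on a morphism $S$ it is by construction the map on cohomology induced by $\eta \mapsto (T^{-1}\eta_{S(X_1),\dots,S(X_n)})_{X_1,\dots,X_n \in \cC_1}$.

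There is no substantial obstacle here — all the real content lives in \Cref{p:functoriality-square} — and the only point needing attention is the verification that ``fully faithful on endomorphisms'' is exactly the condition making $(S,S)$ a morphism of $\cF$, so that the right subcategory is carved out. This is parallel to, but distinct from, the previous corollary, where one instead freezes the lower functor $T$ to be an identity.
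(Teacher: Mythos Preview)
Your proposal is correct and is exactly the approach the paper takes: the corollary is stated without proof as the special case of \Cref{p:functoriality-square} obtained by restricting $\cF$ to squares in which $F$ and $G$ are identity functors, which forces $T = S$ and hence the condition that $S$ be fully faithful on endomorphisms.
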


\begin{example} \label{c:forgetful}
    Let $\cC \subseteq \cD$ be a full $k$-linear semigroupal subcategory. Then
    the forgetful morphism of complexes $C_\DY^\bullet(\cD) \to
    C_\DY^\bullet(\cC)$ is induced by the inclusion $\cC \hookrightarrow \cD$
    via \Cref{c:functoriality-cat}.
\end{example}

    \section{Adjoining a unit} \label{s:unit}

DY cohomology is fundamentally a cohomology theory for (non-unital) semigroupal
categories, just as Hochschild cohomology is a cohomology theory for non-unital
algebras. As observed by Hochschild \cite[\S~2]{hochschild}, Hochschild
cohomology is invariant under unitalization, i.e.~the adjunction of a formal
unit element. In order to prove an analogous statement for DY cohomology, we
recall the central points of Hochschild's argument for algebras and later
transfer it to the case of semigroupal categories.

\subsection{Unitalization of algebras} \label{ss:unitalization-alg}

Let $k$ be a field, $A$ a $k$-algebra and $M$ an $(A, A)$-bimodule. Even if
there is a unit $1 \in A$, we do not require $1 \cdot m = m = m \cdot 1$ for~$m
\in M$.

\begin{definition}
    The \textbf{unitalization} $A^\uu$ of the $k$-algebra $A$ is the $k$-algebra
    with underlying $k$-vector space $A \oplus k$ and multiplication $(a,
    \lambda)(b, \mu) \coloneqq (ab + \lambda b + \mu a, \lambda\mu)$ for all
    $a, b \in A$ and scalars $\lambda, \mu \in k$.
\end{definition}

Clearly, the unitalization is unital with unit $(0, 1)$ and $A$ is canonically
a subalgebra of $A^\uu$ via $\iota: A \hookrightarrow A^\uu, a \mapsto (a, 0)$.
In addition, $M$ becomes an $(A^\uu, A^\uu)$-bimodule by letting the new unit
element $(0, 1)$ act as the identity.

The vector space $\Choch^1(A, M) = \Hom_k(A, M)$ becomes an $(A, A)$-bimodule
by defining
\begin{equation} \label{e:action-on-hom}
    (a \cdot f)(b) \coloneqq a \cdot f(b), \qquad (f \cdot a)(b) \coloneqq
    f(ab) - f(a) \cdot b
\end{equation}
for all $a, b \in A$ and $f \in \Hom_k(A, M)$. For $n \in \N_+$, we have the
currying and uncurrying isomorphisms
\begin{equation} \label{e:currying}\begin{aligned}
    \Gamma: \Choch^n(A, M) &\longrightarrow \Choch^{n-1}(A, \Hom_k(A, M)) \\
    f & \longmapsto \left((a_1, \dots, a_{n-1}) \mapsto (a_n \mapsto
    f(a_1, \dots, a_n))\right) \\
    \left((a_1, \dots, a_n) \mapsto f(a_1, \dots, a_{n-1})(a_n)\right) & \longmapsfrom f.
\end{aligned}\end{equation}
Currying commutes with differentials, i.e.~the isomorphisms~\eqref{e:currying}
yield an isomorphism of complexes $\Choch^n(A, M) \cong \Choch^{n-1}(A,
\Hom_k(A, M))$. This allows us to perform induction on the degree of Hochschild
cochains and obtain the following crucial ingredient (see
{\cite[Lemma~1]{hochschild}}):

\begin{lemma}
    \label{l:cohomologous-to-1}
    Assume that the $k$-algebra $A$ has a unit $1 \in A$ and let $f \in
    \Choch^n(A, M)$ be an $n$-cocycle for some $n \in \N_+$. Then
    there exists a cochain $h \in \Choch^{n-1}(A, M)$ such that $(f +
    \dhoch^{n-1}h)(a_1, \dots, a_n) = 0$ whenever $a_i = 1$ for some $i$.
\end{lemma}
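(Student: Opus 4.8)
The plan is to induct on the cochain degree $n$, using the currying isomorphism of complexes $\Choch^n(A,M) \cong \Choch^{n-1}(A,\Hom_k(A,M))$ from~\eqref{e:currying} to reduce the statement about ``$a_i = 1$ for some $i$'' to two coupled sub-statements: one about the last slot ($a_n = 1$) and one about the remaining slots, the latter handled by the inductive hypothesis applied to the bimodule $\Hom_k(A,M)$ with the action~\eqref{e:action-on-hom}. First I would set up the base case $n = 1$: given a $1$-cocycle $f \in \Choch^1(A,M) = \Hom_k(A,M)$, the cocycle condition reads $a\cdot f(b) - f(ab) + f(a)\cdot b = 0$; plugging $a = b = 1$ gives $f(1) = f(1) + f(1) \cdot 1$, so $f(1) = f(1)\cdot 1$, and plugging $b=1$ gives $a \cdot f(1) - f(a) + f(a) = 0$, i.e.\ $a \cdot f(1) = 0$; one then checks that $h \coloneqq f(1) \in M = \Choch^0(A,M)$ and $\dhoch^0 h$ has the property that $(f + \dhoch^0 h)(1) = f(1) + 1\cdot f(1) - f(1)\cdot 1 = 0$, using the computed identities. (One must be slightly careful because $M$ is not assumed unital, so $1 \cdot m$ and $m \cdot 1$ need not be $m$; the cocycle identities supply exactly what is needed.)

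For the inductive step, suppose the lemma holds in degree $n-1$ for all bimodules, and let $f \in \Choch^n(A,M)$ be an $n$-cocycle. Write $\Gamma f \in \Choch^{n-1}(A,\Hom_k(A,M))$ for its curried version, which is an $(n-1)$-cocycle since currying commutes with differentials. By the inductive hypothesis applied to $\Gamma f$ and the bimodule $\Hom_k(A,M)$, there is $g \in \Choch^{n-2}(A,\Hom_k(A,M))$ such that $(\Gamma f + d g)(a_1,\dots,a_{n-1})$ vanishes in $\Hom_k(A,M)$ whenever some $a_i = 1$ with $i \leqslant n-1$. Uncurrying, $f' \coloneqq f + \dhoch^{n-1}(\Gamma^{-1}g)$ is cohomologous to $f$ and satisfies $f'(a_1,\dots,a_n) = 0$ whenever $a_i = 1$ for some $i \leqslant n-1$. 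It remains to kill the case $a_n = 1$ without disturbing the other slots: here I would use that $f'$ is still an $n$-cocycle and apply a direct argument à la the base case, setting $h(a_1,\dots,a_{n-1}) \coloneqq (-1)^n\, f'(a_1,\dots,a_{n-1},1)$ (or the analogous sign), and checking via the cocycle identity $\dhoch^n f' = 0$ — evaluated on $(a_1,\dots,a_{n-1},1,a_{n+1})$ or with a $1$ inserted in the last position — that $f' + \dhoch^{n-1}h$ vanishes on all the required tuples, while the correction term $\dhoch^{n-1}h$ still vanishes when $a_i = 1$ for $i \leqslant n-1$ because $h$ inherits that vanishing from $f'$.

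The main obstacle I expect is the bookkeeping in this last step: one has to verify that the correction $h$ built from the ``last slot'' values does not reintroduce nonzero values in the slots that were already cleared. This comes down to a careful inspection of the coface maps $\partial_i$ appearing in $\dhoch^{n-1}h$ when one of the arguments $a_i$ ($i \leqslant n-1$) is set to $1$: the terms $\partial_0$ and $\partial_j$ for $j$ near that slot must be seen to cancel in pairs or vanish individually, using both the vanishing property of $h$ and the cocycle relation for $f'$. The signs and the asymmetry between the $i=0$, $1\leqslant i \leqslant n$, and $i = n+1$ cases of the Hochschild coface maps make this the fiddly part; conceptually, though, it is forced, and the currying reduction does most of the real work.
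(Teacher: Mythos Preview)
Your overall architecture---induct on $n$, use the currying isomorphism~\eqref{e:currying} to handle the first $n-1$ slots, then add a further correction to kill the last slot---is exactly the paper's strategy. But the specific corrections you propose do not work, and the reason is not sign bookkeeping: it is precisely the non-unitality of $M$ that you flag and then ignore in the computations.

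In the base case, your derivation ``plugging $b=1$ gives $a\cdot f(1) - f(a) + f(a) = 0$'' silently uses $f(a)\cdot 1 = f(a)$. The correct identity is $a\cdot f(1) = f(a) - f(a)\cdot 1$, and your choice $h = f(1)$ gives $(f + \dhoch^0 h)(1) = f(1) + 1\cdot f(1) - f(1)\cdot 1 = 2\bigl(1\cdot f(1)\bigr)$, which need not vanish. The paper instead takes $h = 1\cdot f(1) - f(1)\cdot 1$ and uses the consequence $1\cdot f(1)\cdot 1 = 0$ of the cocycle identity.

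The same issue recurs in the inductive step. With your $h(a_1,\dots,a_{n-1}) = (-1)^n f'(a_1,\dots,a_{n-1},1)$, a direct computation using $\dhoch^n f' = 0$ on $(a_1,\dots,a_n,1)$ yields
\[
(f' + \dhoch^{n-1}h)(a_1,\dots,a_n) \;=\; f'(a_1,\dots,a_n)\cdot 1 \;+\; f'(a_1,\dots,a_{n-1},1)\cdot a_n,
\]
which at $a_n = 1$ equals $2\,f'(a_1,\dots,a_{n-1},1)\cdot 1$, not zero. The paper's fix is to take instead
\[
g'(a_1,\dots,a_{n-1}) \;=\; f'(a_1,\dots,a_{n-1},1) \;-\; 2\,f'(a_1,\dots,a_{n-1},1)\cdot 1,
\]
so that with $h = g + (-1)^n g'$ one obtains $(f+\dhoch^{n-1}h)(a_1,\dots,a_n) = f'(a_1,\dots,a_n)\cdot 1 - f'(a_1,\dots,a_{n-1},1)\cdot a_n$, which visibly vanishes both when $a_n = 1$ and when some earlier $a_i = 1$. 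The extra ``$-2(\cdots)\cdot 1$'' term is the missing idea; without it the argument does not close for non-unital $M$.
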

\begin{proof}
    We proceed by induction on $n$. For $n = 1$, take $h \coloneqq 1 \cdot f(1)
    - f(1) \cdot 1$. Then
    \[\begin{aligned}
        (f - \dhoch^0h)(a) &= f(a) - a \cdot h + h \cdot a \\
        &= f(a) - a \cdot f(1) + a \cdot f(1) \cdot 1 + 1 \cdot f(1) \cdot a -
        f(1) \cdot a.
    \end{aligned}\]
    Since $f$ is a 1-cocycle, we have $f(1) = 1 \cdot f(1) + f(1) \cdot 1$, so
    $1 \cdot f(1) = 1 \cdot f(1) + 1 \cdot f(1) \cdot 1$, which yields $1 \cdot
    f(1) \cdot 1 = 0$. Therefore, substituting $a = 1$, the above equation
    becomes $(f-\dhoch^0h)(1) = 0$.

    For the induction step, assume that $n > 1$. Since currying commutes with
    differentials, $\Gamma(f) \in \Choch^{n-1}(A, \Hom_k(A, M))$ is an
    $(n-1)$-cocycle. The induction hypothesis yields a cochain $p \in
    \Choch^{n-2}(A, \Hom_k(A, M))$ such that $(\Gamma(f) + \dhoch^{n-2}p)(a_1,
    \dots, a_{n-1}) = 0$ whenever $a_i = 1$ for some $i$. Consequently, $g
    \coloneqq \Gamma^{-1}(p) \in \Choch^{n-1}(A, M)$ satisfies
    \[
        f'(a_1, \dots, a_n) \coloneqq (f + \dhoch^{n-1}g)(a_1, \dots,
        a_n) = 0
    \]
    whenever $a_i = 1$ for some $i \in \{1, \dots, n-1\}$. To make the
    expression vanish also for $a_n = 1$, we need to modify $g$ in the
    following way: Consider the $(n-1)$-cochain $g'$ given by
    \[
        g'(a_1, \dots, a_{n-1}) \coloneqq f'(a_1, \dots, a_{n-1}, 1) - 2f'(a_1,
        \dots, a_{n-1}, 1) \cdot 1,
    \]
    where the $2$ denotes scalar multiplication. Using the fact that $f'$ is a
    cocycle, a direct calculation shows
    \[\begin{aligned}
        &\;(\dhoch^{n-1}g')(a_1, \dots, a_n) \\
        =&\;(-1)^n \big(-f'(a_1, \dots, a_{n-1}, 1) \cdot a_n - f'(a_1, \dots,
        a_n) + f'(a_1, \dots, a_n) \cdot 1 \big).
    \end{aligned}\]
    Now choose $h = g + (-1)^n g'$. We obtain
    \[\begin{aligned}
        (f + \dhoch^{n-1}h)(a_1, \dots, a_n) =&\;f'(a_1, \dots, a_n) +
        (-1)^n (\dhoch^{n-1}g')(a_1, \dots, a_n) \\
        =&\;f'(a_1, \dots, a_n) - f'(a_1, \dots, a_{n-1}, 1) \cdot a_n \\
        &- f'(a_1, \dots, a_n) + f'(a_1, \dots, a_n) \cdot 1 \\
        =&\;f'(a_1, \dots, a_n) \cdot 1 - f'(a_1, \dots, a_{n-1}, 1) \cdot a_n.
    \end{aligned}\]
    This expression clearly vanishes for $a_n = 1$, and it vanishes for $a_i =
    0$ with $i \in \{1, \dots, n-1\}$ by construction of $f'$.
\end{proof}

Recall that we have a morphism of algebras $\iota: A \hookrightarrow A^\uu, a
\mapsto (a, 0)$. The projection $\pi: A^\uu \twoheadrightarrow A, (a, \lambda)
\mapsto a$ is not in general a morphism of algebras, but still induces maps
\[\begin{aligned}
    \pi^*: \Choch^n(A, M) &\longrightarrow \Choch^n(A^\uu, M) \\
    f &\longmapsto \left((a_1, \dots, a_n) \mapsto f(\pi(a_1), \dots,
    \pi(a_n))\right)
\end{aligned}\]
which are readily seen to constitute a morphism of complexes. From this setup,
Hochschild deduces invariance under adjoining a unit (see {\cite[Theorem
2]{hochschild}}):

\begin{proposition}
    \label{p:unitalization-alg} The morphism of complexes $\iota^*:
    \Choch^\bullet(A^\uu, M) \to \Choch^\bullet(A, M)$ induced by $\iota: A
    \hookrightarrow A^\uu$ descends to an isomorphism $\Hhoch^n(A^\uu, M) \cong
    \Hhoch^n(A, M)$ for all $n \geqslant 1$.
\end{proposition}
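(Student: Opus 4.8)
The plan is to exhibit $\iota^*$ as a map that is surjective on cohomology via the retraction $\pi$, and injective on cohomology via the normalization supplied by \Cref{l:cohomologous-to-1}.

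First I would dispatch the easy half. Since $\pi \circ \iota = \id_A$, the induced chain maps satisfy $\iota^* \circ \pi^* = \id$ on $\Choch^\bullet(A, M)$; hence $\iota^*$ is a split epimorphism of complexes and in particular induces a surjection $\Hhoch^n(A^\uu, M) \twoheadrightarrow \Hhoch^n(A, M)$ for every $n \geqslant 1$.

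For injectivity, fix $n \geqslant 1$ and let $f \in \Choch^n(A^\uu, M)$ be an $n$-cocycle whose class lies in the kernel, i.e.~$\iota^* f = \dhoch^{n-1} g$ for some $g \in \Choch^{n-1}(A, M)$. Because $A^\uu$ is unital with unit $\one \coloneqq (0,1)$, \Cref{l:cohomologous-to-1} yields $h \in \Choch^{n-1}(A^\uu, M)$ such that $f' \coloneqq f + \dhoch^{n-1} h$ vanishes on any tuple one of whose entries equals $\one$; by $k$-multilinearity $f'$ then vanishes whenever one entry lies in $k \cdot \one$. Writing each $x_i \in A^\uu$ as $x_i = \iota(\pi(x_i)) + \lambda_i \one$ with $\lambda_i \in k$ and expanding multilinearly, every summand containing some factor $\lambda_j \one$ dies, so $f'(x_1, \dots, x_n) = f'(\iota\pi(x_1), \dots, \iota\pi(x_n))$; that is, $f' = \pi^*(\iota^* f')$.

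Finally I would assemble the pieces. From $f' = f + \dhoch^{n-1} h$ we get $\iota^* f' = \dhoch^{n-1} g + \dhoch^{n-1}(\iota^* h) = \dhoch^{n-1}(g + \iota^* h)$, a coboundary in $\Choch^\bullet(A, M)$. Applying the chain map $\pi^*$ and using the identity $f' = \pi^*(\iota^* f')$ shows $f' = \dhoch^{n-1}\big(\pi^*(g + \iota^* h)\big)$ is a coboundary in $\Choch^\bullet(A^\uu, M)$, whence so is $f = f' - \dhoch^{n-1} h$. Thus $\iota^*$ is also injective on $\Hhoch^n$, completing the argument. The step requiring the most care — and the only real obstacle, since \Cref{l:cohomologous-to-1} does the heavy lifting — is the identification $f' = \pi^*(\iota^* f')$ for the normalized cochain, resting on the multilinear decomposition above together with the (already recorded) fact that $\pi^*$ is a morphism of complexes; the rest is formal.
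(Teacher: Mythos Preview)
Your proof is correct and follows essentially the same approach as the paper: surjectivity via the retraction $\pi^*$, injectivity by applying \Cref{l:cohomologous-to-1} to normalize a cocycle so that it vanishes whenever an argument equals the unit, then using the multilinear decomposition $A^\uu = A \oplus k\cdot\one$ to conclude. If anything, your write-up is more careful at the injectivity step: the paper tacitly replaces $f$ by a representative with $\iota^* f = 0$ and then asserts the normalized cocycle is identically zero, whereas you track the coboundary $\iota^* f = \dhoch g$ explicitly and deduce only that the normalized cochain is itself a coboundary---which is all that is needed.
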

\begin{proof}
    Let $n \in \N_+$. We show that $\iota^*: \Hhoch^n(A^\uu, M) \to \Hhoch^n(A,
    M)$ is bijective. For injectivity, let $[f] \in \ker(\iota^*)$, i.e.~$f \in
    \Choch^n(A^\uu, M)$ is an $n$-cocycle such that $f(a_1, \dots, a_n) = 0$
    for all $a_1, \dots, a_n \in A$. Since $A^\uu$ is unital, we may apply
    \Cref{l:cohomologous-to-1} to obtain a cocycle $g \in \Choch^n(A^\uu, M)$
    such that $[f] = [g]$ and $g(a_1, \dots, a_n) = 0$ whenever $a_i = 1 \in
    A^\uu$ for some $i$. Since $g$ is linear in every argument, this implies
    that $g = 0$, so $[f] = [g] = 0$.

    For surjectivity, let $f \in \Choch^n(A, M)$ be an $n$-cocycle. Then
    $\pi^*f \in \Choch^n(A^\uu, M)$ is also an $n$-cocycle and $\iota^*\pi^*f =
    (\pi\iota)^*f = f$, so $\iota^*([\pi^*f]) = [f]$.
\end{proof}

\Cref{l:cohomologous-to-1} is central to the proof of
\Cref{p:unitalization-alg}. It will later be used in a slightly stronger form,
namely for a subcomplex of $\Choch^\bullet(A, M)$ satisfying certain stability
conditions:

\begin{definition} \label{d:stable-ev-mul}
    We say that a subcomplex $D^\bullet \subseteq \Choch^\bullet(A, M)$ is
    \textbf{stable under evaluation and multiplication} if it meets the
    following stability condition: If $f \in D^n$, $b \in A$ and $i \in \{1,
    \dots, n\}$, then the following function is in $D^{n-1}$:
    \begin{equation} \label{e:cond-1}
        (a_1, \dots, a_{n-1}) \longmapsto f(a_1, \dots, a_{i-1}, b, a_i, \dots,
        a_{n-1})
    \end{equation}
    and the following functions are in $D^{n+1}$:
    \begin{equation} \label{e:cond-2}
        (a_1, \dots, a_{n+1}) \longmapsto f(a_1, \dots, a_{i-1},
        a_ia_{i+1}, a_{i+2}, \dots, a_n),
    \end{equation}
    \begin{equation} \label{e:cond-3}
        (a_1, \dots, a_{n+1}) \mapsto a_1 f(a_2, \dots, a_{n+1}),
    \end{equation}
    \begin{equation} \label{e:cond-4}
        (a_1, \dots, a_{n+1}) \mapsto f(a_1, \dots, a_n) a_{n+1}.
    \end{equation}
\end{definition}

\begin{lemma} \label{l:stable-cohomologous-to-1}
    Assume that $A$ is unital and let $D^\bullet \subseteq \Choch^\bullet(A,
    M)$ be a subcomplex stable under evaluation and multiplication. Let $f \in
    D^n$ be an $n$-cocycle for some $n \in \N_+$. Then there exists $h \in
    D^{n-1}$ such that $(f+\dhoch^{n-1}h)(a_1, \dots, a_n) = 0$ whenever $a_i =
    1$ for some $i$.
\end{lemma}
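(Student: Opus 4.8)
The statement is a verbatim strengthening of \Cref{l:cohomologous-to-1}: we want the same conclusion, but with the correcting cochain $h$ living inside a prescribed subcomplex $D^\bullet$ rather than all of $\Choch^\bullet(A,M)$. The natural strategy is therefore to \emph{re-run the proof of \Cref{l:cohomologous-to-1} verbatim and check at each step that every cochain produced lies in $D^\bullet$}, using exactly the four stability conditions \eqref{e:cond-1}--\eqref{e:cond-4} of \Cref{d:stable-ev-mul}. In fact these four conditions are designed precisely to close the induction, so the content of the lemma is almost bookkeeping.

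\smallskip

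\textbf{Base case $n=1$.} One takes $h \coloneqq 1\cdot f(1) - f(1)\cdot 1$. We must see $h \in D^0$. Since $f \in D^1$, condition \eqref{e:cond-1} with $b=1$, $i=1$ tells us the constant $f(1) \in D^0$; then conditions \eqref{e:cond-3} and \eqref{e:cond-4} (applied to a $0$-cochain, with $a_1=1$) give $1\cdot f(1) \in D^0$ and $f(1)\cdot 1 \in D^0$, and $D^0$ is a subspace, so $h\in D^0$. The rest of the base case is the identical calculation showing $(f-\dhoch^0 h)(1)=0$.

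\smallskip

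\textbf{Induction step.} Here the one genuine subtlety is that the proof of \Cref{l:cohomologous-to-1} passes through the currying isomorphism $\Gamma:\Choch^n(A,M)\to\Choch^{n-1}(A,\Hom_k(A,M))$ and invokes the induction hypothesis for the coefficient bimodule $\Hom_k(A,M)$. To make this legitimate we must transport the subcomplex: set $\Gamma(D)^\bullet \coloneqq \Gamma(D^\bullet) \subseteq \Choch^{\bullet}(A,\Hom_k(A,M))$, a subcomplex since $\Gamma$ commutes with differentials, and verify that $\Gamma(D)^\bullet$ is again stable under evaluation and multiplication over the bimodule $\Hom_k(A,M)$ (whose bimodule structure is \eqref{e:action-on-hom}). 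Unwinding the definitions, the four stability conditions for $\Gamma(D)^\bullet$ translate back, under $\Gamma^{-1}$, into combinations of \eqref{e:cond-1}--\eqref{e:cond-4} for $D^\bullet$ together with one extra instance of \eqref{e:cond-1} coming from the curried last variable — this is the place to be careful, and I expect it to be the main (though still routine) obstacle. Granting this, the induction hypothesis applied in $\Choch^{\bullet}(A,\Hom_k(A,M))$ with subcomplex $\Gamma(D)^\bullet$ produces $p\in\Gamma(D)^{n-2}$, hence $g\coloneqq\Gamma^{-1}(p)\in D^{n-1}$, so $f' = f+\dhoch^{n-1}g$ lies in $D^n$ and vanishes whenever one of the first $n-1$ arguments is $1$.

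\smallskip

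\textbf{Fixing the last variable.} The remaining correction $g'(a_1,\dots,a_{n-1}) = f'(a_1,\dots,a_{n-1},1) - 2f'(a_1,\dots,a_{n-1},1)\cdot 1$ must be shown to lie in $D^{n-1}$: evaluating the last slot of $f'\in D^n$ at $1$ uses \eqref{e:cond-1} with $b=1$, $i=n$, giving the first summand in $D^{n-1}$; the second summand is that cochain post-multiplied by the scalar $2\in k$, still in the subspace $D^{n-1}$. (Note \eqref{e:cond-4} is what guarantees the ``$\cdot 1$'' operation is internal to $D^\bullet$ if one prefers to phrase it that way, but scalar multiplication alone suffices here.) Then $h\coloneqq g+(-1)^n g' \in D^{n-1}$, and the displayed identity $(f+\dhoch^{n-1}h)(a_1,\dots,a_n) = f'(a_1,\dots,a_n)\cdot 1 - f'(a_1,\dots,a_{n-1},1)\cdot a_n$ is the same computation as in \Cref{l:cohomologous-to-1} and vanishes for $a_n=1$, while vanishing for $a_i=1$, $i<n$, by construction of $f'$. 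This completes the induction. The only real work, to be spelled out, is the compatibility of $\Gamma(D)^\bullet$ with the stability conditions; everything else is a line-by-line transcription of the earlier proof with membership checks inserted.
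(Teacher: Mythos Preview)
Your approach is essentially identical to the paper's: observe that the curried subcomplex $\Gamma(D^\bullet)\subseteq\Choch^\bullet(A,\Hom_k(A,M))$ is again stable under evaluation and multiplication, and then re-run the proof of \Cref{l:cohomologous-to-1} checking that each cochain produced stays in $D^\bullet$. The paper's proof says exactly this in two sentences; your write-up is more detailed but follows the same line.

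One correction worth flagging: in the ``fixing the last variable'' step you claim that for the term $f'(a_1,\dots,a_{n-1},1)\cdot 1$, ``scalar multiplication alone suffices here.'' This is not scalar multiplication by $1\in k$; it is the right $A$-action of $1_A$ on $M$, and the paper explicitly does \emph{not} assume $m\cdot 1_A = m$ for $m\in M$. So this term is genuinely different from $f'(a_1,\dots,a_{n-1},1)$, and you really do need \eqref{e:cond-4} followed by \eqref{e:cond-1} (as your parenthetical already indicates) to place it in $D^{n-1}$. The same remark applies in the base case to $1\cdot f(1)$ and $f(1)\cdot 1$.
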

\begin{proof}
    Note that by definition of the $(A, A)$-bimodule structure on $\Hom_k(A,
    M)$ in~\eqref{e:action-on-hom}, the curried complex $\Gamma(D^\bullet)
    \subseteq \Choch^\bullet(A, \Hom_k(A, M))$ is also stable under evaluation
    and multiplication. Consequently, the cochain $h$ in the proof of
    \Cref{l:cohomologous-to-1} is constructed as a linear combination of
    functions that arise from repeated application of the
    functions~\eqref{e:cond-1}--\eqref{e:cond-4}.
\end{proof}

\subsection{Unitalization of semigroupal categories} \label{ss:unitalization-cat}

For ``sufficiently'' strict categories, the DY complex may be seen as a
subcomplex of some Hochschild complex. This allows us to transport the results
of the previous subsection to the case of semigroupal categories. Here is the
analog of the unitalization $A^\uu$ of an algebra $A$:

\begin{definition} \label{d:unitalization-cat}
    Let $\cC$ be a $k$-linear semigroupal category. A $k$-linear monoidal
    category $\cC^\uu$ with unit object $\one$ is called a
    \textbf{unitalization} of $\cC$ if $\cC \subseteq \cC^\uu$ is the full
    semigroupal subcategory of objects not isomorphic to $\one$ and
    \[
        \Hom_{\cC^\uu}(\one, X) = \{0\}, \qquad \Hom_{\cC^\uu}(X, \one) =
        \{0\}, \qquad X \in \cC.
    \]
\end{definition}

In other words, a unitalization $\cC^\uu$ is obtained from $\cC$ by freely
adjoining a unit object~$\one$. It is unique up to choice of the endomorphism
algebra $\End_{\cC^\uu}(\one)$. Recall that the category $\cC$ is
\textit{skeletal} if $X \cong Y$ implies $X = Y$ for all $X, Y \in \cC$. Below,
we will prove the following:

\begin{theorem} \label{p:hdy-unitalization}
    Let $\cC^\uu$ be a unitalization of a small $k$-linear semigroupal category
    $\cC$. Assume that $\cC$ is semigroupally equivalent to a skeletal strict
    semigroupal category. Then the morphism $\HDY^n(\cC^\uu) \to \HDY^n(\cC)$
    induced by $\iota: \cC \hookrightarrow \cC^\uu$ is an isomorphism for all
    $n \geqslant 1$.
\end{theorem}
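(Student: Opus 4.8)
The plan is to mirror the algebra argument from \Cref{ss:unitalization-alg} by realizing the DY complex as a subcomplex of a suitable Hochschild complex, and then invoking \Cref{l:stable-cohomologous-to-1}. By \Cref{r:non-strict}, the DY complex is invariant under semigroupal equivalence, so I may replace $\cC$ by a skeletal strict semigroupal category $\cC_{\str}$; correspondingly, one should check that a unitalization $\cC^\uu$ is then equivalent to a skeletal strict \emph{monoidal} category, so that the whole setup becomes strict and skeletal. In that situation, for a strict skeletal monoidal category $\cD$ one can form the ``big'' algebra $A = \bigoplus_{X,Y} \Hom_\cD(X,Y)$ (or a variant adapted to the grading by objects) together with a bimodule $M$ encoding $\bigoplus_X \End_\cD(X)$, in such a way that $\CDY^n(\cD) = \End(F^{\otimes,n})$ embeds as the subspace of $\Choch^n(A,M)$ cut out by the naturality conditions, and the DY differential $d_\DY$ is the restriction of the Hochschild differential $\dhoch$. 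This is essentially the observation alluded to at the start of \Cref{ss:unitalization-cat} (``the DY complex may be seen as a subcomplex of some Hochschild complex''), and it is the technical heart of the reduction.

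Next I would verify that this subcomplex $D^\bullet \subseteq \Choch^\bullet(A,M)$ is \textbf{stable under evaluation and multiplication} in the sense of \Cref{d:stable-ev-mul}. Concretely: evaluating a DY cochain at a fixed object $X_i = B$ (condition \eqref{e:cond-1}) produces a natural transformation of $F^{\otimes,n-1}$ with $B$ ``plugged in'', still natural in the remaining arguments; merging two adjacent arguments via $X_i \otimes X_{i+1}$ (condition \eqref{e:cond-2}) is exactly the coface map $\partial_i^n$ and again lands in $\CDY^{n+1}$; and tensoring on the left or right with $\id_{F(X)}$ (conditions \eqref{e:cond-3}, \eqref{e:cond-4}) are the coface maps $\partial_0^n$ and $\partial_{n+1}^n$. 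So all four stability conditions hold because the relevant operations are precisely the building blocks of the DY coface maps, which preserve naturality. Here the skeletality and strictness are what make ``plugging in a fixed object'' and ``merging objects'' well-defined on the nose rather than only up to canonical isomorphism, so the subcomplex description is clean.

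With stability in hand, \Cref{l:stable-cohomologous-to-1}, applied to the unitalized algebra $A^\uu$ corresponding to $(\cC^\uu)_{\str}$ and the subcomplex of DY cochains, says: every DY $n$-cocycle $\eta$ on $\cC^\uu$ is cohomologous, \emph{within the DY complex}, to one that vanishes whenever some $X_i = \one$. I would then run the injectivity/surjectivity argument of \Cref{p:unitalization-alg} verbatim in the categorical setting. For injectivity: if $[\eta] \in \ker(\iota^*)$, then $\eta$ already vanishes whenever all $X_i \in \cC$; after replacing $\eta$ by its representative vanishing on any $X_i = \one$, and using $\Hom_{\cC^\uu}(\one, X) = \Hom_{\cC^\uu}(X, \one) = \{0\}$ together with $\one \otimes X = X = X \otimes \one$ in the strict skeletal model, multilinearity forces $\eta = 0$. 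For surjectivity: the projection-type map $\pi^*$ from the proof of \Cref{p:unitalization-alg} is realized categorically by the functor $\cC^\uu \to \cC^\uu$ collapsing $\one$ (extended appropriately), inducing a splitting $\CDY^\bullet(\cC) \to \CDY^\bullet(\cC^\uu)$ with $\iota^* \circ \pi^* = \id$; I should check this $\pi^*$ genuinely lands in the DY subcomplex (it does, since precomposition with a strict semigroupal functor preserves naturality and compatibility with coface maps).

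The main obstacle I anticipate is the first step: setting up the Hochschild algebra $A$, the bimodule $M$, and the identification $\CDY^\bullet(\cD) \hookrightarrow \Choch^\bullet(A,M)$ so that differentials match \emph{and} so that the bimodule structure on $\Hom_k(A,M)$ from \eqref{e:action-on-hom} interacts correctly with the currying used in \Cref{l:cohomologous-to-1} --- in particular, that $\Gamma$ carries the DY subcomplex to a subcomplex that is again stable under evaluation and multiplication (as needed for \Cref{l:stable-cohomologous-to-1}'s inductive construction of $h$). A secondary subtlety is handling the endomorphism algebra $\End_{\cC^\uu}(\one)$, which is arbitrary: since all DY cochains of positive degree take values in $\bigoplus_X \End(X)$ over $X$ and the cocycle representatives are arranged to vanish on $\one$-arguments, the value of $\End(\one)$ never enters, consistent with the theorem only claiming $n \geqslant 1$. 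Once the subcomplex-of-Hochschild picture is nailed down, the rest is a faithful transcription of \Cref{p:unitalization-alg}.
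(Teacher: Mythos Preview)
Your overall strategy matches the paper's proof exactly: reduce to the strict skeletal case, embed the DY complex into a Hochschild complex stable under evaluation and multiplication, invoke \Cref{l:stable-cohomologous-to-1} for injectivity, and use a $\pi^*$-type splitting for surjectivity.

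The one place where your proposal diverges---and where you correctly anticipate the main obstacle---is the choice of algebra $A$. The paper does \emph{not} use the morphism algebra $\bigoplus_{X,Y}\Hom_\cD(X,Y)$. Instead, since $\cC$ is strict and skeletal, the set $S = \Ob(\cC)$ is a semigroup under $\otimes$, and one takes $A = k[S]$ together with the bimodule $M = \prod_{X \in S}\End_\cC(X)$, where $X \cdot f = \id_X \otimes f$ and $f \cdot X = f \otimes \id_X$. With this choice the embedding $\CDY^n(\cC)\hookrightarrow\Choch^n(k[S],M)$ is immediate (a DY cochain is already a function on tuples of objects valued in endomorphisms), the differentials match on the nose, and stability under evaluation and multiplication becomes a one-line naturality check. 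This also dissolves your currying worry: with $A = k[S]$ the bimodule structure \eqref{e:action-on-hom} on $\Hom_k(A,M)$ is built from the same tensor-with-identity operations, so $\Gamma(\CDY^\bullet)$ is automatically stable.

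For surjectivity, your ``functor collapsing $\one$'' does not literally exist as a semigroupal functor (there is nowhere in $\cC$ for $\one$ to go). On the Hochschild side the linear map $k[S^\uu] \to k[S]$ sending $\one \mapsto 0$ plays the role of the non-multiplicative $\pi$ from \Cref{p:unitalization-alg}, and the resulting $\pi^*\eta$ is precisely the extension of $\eta$ by zero on any $\one$-argument. The paper simply writes this extension down directly and checks it is a well-defined natural transformation using $\Hom_{\cC^\uu}(\one,X) = \Hom_{\cC^\uu}(X,\one) = 0$.
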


\begin{remark}
    The requirement of a strict semigroupal skeleton is not always satisfied:
    Some semigroupal or monoidal categories do not admit a skeletal
    strictification (see e.g.~\cite[Remark~2.8.7]{EGNO}).

    Any category $\cC$ is equivalent to a skeletal category $\cC_\sk$, its
    ``skeleton''. If~$\cC$ carries additional structure such as being
    semigroupal or $k$-linear, then by transfer of structure the category
    $\cC_\sk$ and the equivalence $\cC \to \cC_\sk$ can also be made
    semigroupal or $k$-linear. However, the resulting semigroupal category
    $\cC_\sk$ may be non-strict even if $\cC$ is.
\end{remark}

For the rest of this subsection, let $\cC$ be a small skeletal strict
$k$-linear semigroupal category. Then the objects $S = \Ob(\cC)$ of $\cC$ form
a semigroup under $\otimes$ and we may consider the semigroup algebra $k[S]$:
It is the $k$-vector space with basis $S$ and multiplication extended linearly
from $\otimes$. The $k$-vector space $M = \prod_{X \in S}
\End_\cC(X)$ becomes a $(k[S], k[S])$-bimodule by defining, for all $X, Y \in
S$ and $f \in \End_\cC(Y)$,
\[
    X \cdot f \coloneqq \id_X \otimes f \in \End_\cC(X \otimes Y), \qquad f
    \cdot X \coloneqq f \otimes \id_X \in \End_\cC(Y \otimes X).
\]

\begin{lemma} \label{l:cdy-stable}
    The inclusions
    \[\begin{aligned}
        \CDY^n(\cC) &\lhook\joinrel\longrightarrow \Choch^n(k[S], M), \\
        \eta &\longmapsto \left((X_1, \dots, X_n) \mapsto \eta_{X_1,\dots,X_n}
        \right)
    \end{aligned}\]
    for $n \in \N_+$ realize $\CDY^\bullet(\cC)$ as a subcomplex of
    $\Choch^\bullet(k[S], M)$ that is stable under evaluation and
    multiplication in the sense of \Cref{d:stable-ev-mul}.
\end{lemma}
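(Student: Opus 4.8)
The plan is to verify directly that the image of $\CDY^\bullet(\cC)$ under the given inclusions is closed under each of the four stability operations in \Cref{d:stable-ev-mul}, using the fact that a DY cochain $\eta \in \CDY^n(\cC)$ is precisely a family $\eta_{X_1,\dots,X_n}\in\End_\cC(X_1\otimes\cdots\otimes X_n)$ that is natural in each slot. First I would observe that the map is well-defined and injective: a natural endotransformation of $F^{\otimes,n}$ is literally a function $(X_1,\dots,X_n)\mapsto \eta_{X_1,\dots,X_n}\in\End_\cC(X_1\otimes\cdots\otimes X_n)\subseteq M$, linear in each argument because the category is $k$-linear, so it defines an element of $\Choch^n(k[S],M)=\Hom_k(k[S]^{\otimes n},M)$; distinct DY cochains give distinct functions. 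Then I would check that the coface maps \eqref{e:coface-dy} match the Hochschild cofaces under the chosen bimodule structure $X\cdot f = \id_X\otimes f$, $f\cdot X = f\otimes\id_X$, and that the inner cofaces $\eta_{\dots, X_{i-1}\otimes X_i,\dots}$ correspond to the Hochschild face that multiplies adjacent arguments in $k[S]$; hence the inclusion is a chain map and $\CDY^\bullet(\cC)$ is genuinely a subcomplex.

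The main work is stability. For condition \eqref{e:cond-1}, given $\eta\in\CDY^n(\cC)$, an object $B\in S$ and an index $i$, I must show the family $(X_1,\dots,X_{n-1})\mapsto \eta_{X_1,\dots,X_{i-1},B,X_i,\dots,X_{n-1}}$ is again a DY cochain, i.e.~natural in each $X_j$. Naturality in $X_j$ for $j\ne i$ is inherited immediately from naturality of $\eta$ in the corresponding slot; there is no slot to worry about for $B$ since it is now fixed. So \eqref{e:cond-1} reduces to restricting a natural family to a constant value in one coordinate, which is automatic. For conditions \eqref{e:cond-3} and \eqref{e:cond-4}, the new families are $(X_1,\dots,X_{n+1})\mapsto \id_{X_1}\otimes\eta_{X_2,\dots,X_{n+1}}$ and $(X_1,\dots,X_{n+1})\mapsto \eta_{X_1,\dots,X_n}\otimes\id_{X_{n+1}}$; these are exactly $\partial_0^n(\eta)$ and $\partial_{n+1}^n(\eta)$ from \eqref{e:coface-dy}, which lie in $\CDY^{n+1}(\cC)$ by construction of the DY complex, so nothing new is needed. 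The only slightly more substantive case is \eqref{e:cond-2}: the family $(X_1,\dots,X_{n+1})\mapsto \eta_{X_1,\dots,X_{i-1},X_i\otimes X_{i+1},X_{i+2},\dots,X_{n+1}}$ must be shown natural in $X_i$ and $X_{i+1}$ separately. Here I would use that $\otimes$ is a bifunctor together with naturality of $\eta$ in its $i$-th slot: a morphism $X_i\to X_i'$ induces $X_i\otimes X_{i+1}\to X_i'\otimes X_{i+1}$, and naturality of $\eta$ against this composite morphism gives exactly the required square; symmetrically for $X_{i+1}$. (This is also just $\partial_i^n(\eta)$ for $1\le i\le n$, so in fact all four conditions say no more than that $\CDY^\bullet(\cC)$ is closed under the cofaces and under coordinate restriction.)

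I expect the only mild subtlety — and the step deserving a careful sentence — to be \eqref{e:cond-2}, because it is the one place where naturality must be combined with functoriality of $\otimes$ rather than simply read off from an existing coface. Everything else is bookkeeping: matching indices between the DY and Hochschild cofaces and invoking $k$-bilinearity for the claim that DY cochains land in $\Hom_k(k[S]^{\otimes n},M)$. Since strictness of $\cC$ is assumed throughout the subsection, there are no associator insertions to track, so the identifications are on the nose.
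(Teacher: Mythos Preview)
Your proposal is correct and follows essentially the same approach as the paper: verify that the inclusion is a chain map by matching the DY cofaces with the Hochschild cofaces, then check that each of the four stability operations preserves naturality in the remaining variables. The paper's proof is terser, simply listing the four families $\eta_{\dots,Y,\dots}$, $\eta_{\dots,X_i\otimes X_{i+1},\dots}$, $\id_{X_1}\otimes\eta$, $\eta\otimes\id_{X_{n+1}}$ and asserting their naturality; your observation that conditions \eqref{e:cond-2}--\eqref{e:cond-4} are literally the cofaces $\partial_i^n$ is a mild streamlining but not a different route. One small quibble: your remark that cochains are ``linear in each argument because the category is $k$-linear'' is misplaced---the $X_i$ are objects, not elements of a vector space---but this is harmless, since any set-map $S^n\to M$ extends uniquely to an element of $\Hom_k(k[S]^{\otimes n},M)$.
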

\begin{proof}
    The algebra structure on $k[S]$ and the bimodule structure on $M$ are
    defined in such a way that the inclusions preserve the respective
    differentials and therefore assemble into a morphism of complexes. For
    $\eta \in \CDY^n(\cC)$ and $X_i \in \cC$ and a fixed $Y \in \cC$, the
    morphisms
    \[\begin{aligned}
        \eta_{X_1, \dots, X_{i-1}, Y, X_i, \dots, X_{n-1}}, &\qquad \eta_{X_1,
        \dots, X_i \otimes X_{i+1}, \dots, X_n}, \\
        \id_{X_1} \otimes \eta_{X_2, \dots, X_{n+1}}, &\qquad \eta_{X_1, \dots,
        X_n} \otimes \id_{X_{n+1}}
    \end{aligned}\]
    are natural in the $X_i$, which implies stability under evaluation and
    multiplication.
\end{proof}

\begin{proof}[Proof of~\Cref{p:hdy-unitalization}]
    Since unitalization and Davydov-Yetter cohomology are invariant under
    semigroupal equivalence, we may assume $\cC$ to be strict and skeletal and
    invoke \Cref{l:cdy-stable}.

    To see that $\iota^*: \HDY^n(\cC^\uu) \to \HDY^n(\cC)$ is injective, let
    $[\eta] \in \ker(\iota^*)$, i.e.~$\eta_{X_1,\dots,X_n} = 0$ for all $X_1,
    \dots, X_n \in S$. Since $\cC^\uu$ is unital, \Cref{l:cdy-stable} implies
    that $\CDY^\bullet(\cC^\uu)$ satisfies the assumptions of
    \Cref{l:stable-cohomologous-to-1}. Thus there exists $\zeta \in
    \CDY^n(\cC^\uu)$ such that $[\zeta] = [\eta] \in \HDY^n(\cC^\uu)$ and
    $\zeta_{X_1,\dots,X_n} = 0$ whenever $X_i = \one$ for some $i$. Altogether,
    we have $g = 0$ and therefore $[f] = 0$, as desired.

    To see that $\iota^*$ is surjective, let $\eta \in \CDY^n(\cC)$. We can
    extend $\eta$ to a cochain $\eta' \in \CDY^n(\cC^\uu)$ by setting
    \[
        \eta'_{X_1, \dots, X_n} = \begin{cases}
            \eta_{X_1, \dots, X_n} & X_1, \dots, X_n \in S \\
            0 & X_i = \one \text{ for some } i.
        \end{cases}
    \]
    for all $X_1, \dots, X_n \in \cC^\uu$. This yields a natural transformation
    due to naturality of $\eta$ and the requirements $\Hom_{\cC^\uu}(X, \one) =
    \Hom_{\cC^\uu}(\one, X) = 0$ for all $X \in S$. Therefore, $\eta'$ is a
    well-defined preimage of $\eta$ under $\iota^*$.
\end{proof}

    \section{Adjoining Colimits} \label{s:limits}

Linear monoidal categories are often constructed from smaller categories $\cC$
using various types of completion. Frequent examples include the additive envelope $\cC_\oplus$
(adjunction of direct sums), pseudo-abelian or Karoubi envelope $\cC_\ps$
(adjunction of direct summands), Ind-completion $\Ind(\cC)$ (adjunction of
filtered colimits), cocompletion $\PSh(\cC)$, and more. They
often satisfy a universal property, implying that a linear monoidal functor $F:
\cC \to \cD$ can be upgraded uniquely to an (additive, cocomplete, ...) functor
$F_\oplus: \cC_\oplus \to \cD_\oplus, \Ind(F): \Ind(\cC) \to \Ind(\cD)$.

In this section, we show that the DY cohomology does not change when moving
from $F$ to one of these upgraded
versions. In other words, DY cohomology is invariant under the adjunction of
colimits. We denote the diagrams indexing these colimits by boldface greek
letters~$\bGamma, \bDelta, \dots$.

In this section, we study the DY cohomology of a linear semigroupal functor $F:
\cC \to \cD$ which arises from a functor $F_0: \cC_0 \to \cD_0$ by such an
adjunction of colimits. More precisely, we consider in the following setting:

\begin{setup} \label{st:setup}
    Throughout the section, we work with a commutative diagram of
    $k$-linear semigroupal categories
    \[\begin{tikzcd}
        \cC_0 \arrow{r}{F_0} \arrow[hook]{d}{i_\cC} \arrow[bend right=50,swap]{dd}{Y_{\cC_0}} &[1.3cm] \cD_0 \arrow[hook,swap]{d}{i_\cD} \arrow[bend left=50]{dd}{Y_{\cD_0}} \\
        \cC \arrow{r}{F} \arrow[hook]{d}{j_\cC} & \cD \arrow[hook,swap]{d}{j_\cD} \\
        \PSh(\cC_0) \arrow{r}{\PSh(F_0)} & \PSh(\cD_0),
    \end{tikzcd}\]
    where $\PSh(\cC_0)$ is the category of $k$-linear presheaves $\cC_0^\op \to
    \CVec_k$. The functors $i_\cC, j_\cC, i_\cD, j_\cD$ are fully faithful and
    $Y$ denotes the Yoneda embedding. These embeddings will sometimes be left
    implicit.
\end{setup}

The semigroupal structure on $\PSh(\cC_0)$ and on the functors $Y$ and
$\PSh(F_0)$ is explained below. By \Cref{ss:functoriality}, \Cref{st:setup}
induces a commutative triangle of complexes
\begin{equation} \label{e:setup-triangle} \tag{\text{$\nabla$}}
\begin{tikzcd}
    C_\DY^\bullet(\PSh(F_0)) \arrow{rr} \arrow{rd} && C_\DY^\bullet(F)
    \arrow{ld} \\
    & C_\DY^\bullet(F_0).
\end{tikzcd}
\end{equation}

The main goal of this section is to prove the following

\begin{theorem} \label{t:colimit-invariant}
    The morphisms in the triangle~(\ref{e:setup-triangle}) are isomorphisms.
\end{theorem}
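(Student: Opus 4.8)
The plan is to prove that both maps out of $C_\DY^\bullet(\PSh(F_0))$ in the triangle $(\nabla)$ are isomorphisms of complexes; since the triangle commutes, the third map is then automatically an isomorphism. By the functoriality discussion in \Cref{ss:functoriality}, the map $\vfi_\square : C_\DY^\bullet(G) \to C_\DY^\bullet(F)$ attached to a square $(\square)$ is an \emph{isomorphism} of complexes whenever both vertical functors $S$ and $T$ are fully faithful (not merely $T$) -- because then $\eta \mapsto (T^{-1}\eta_{S(X_1),\dots,S(X_n)})$ has an obvious inverse given by extending a natural transformation on the image of $S$ along the fully faithful $S$, using that $S$ and $T$ are moreover essentially surjective onto their images and that naturality transports across. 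So the task reduces to showing that the two relevant squares have \emph{essentially surjective} vertical functors in the appropriate sense, i.e. that restricting a DY cochain on $\PSh(\cC_0)$ (resp. $\PSh(\cD_0)$) along the fully faithful embeddings $\cC_0 \hookrightarrow \PSh(\cC_0)$ and $\cC \hookrightarrow \PSh(\cC_0)$ loses no information and every cochain downstairs extends.

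The key steps, in order. First, establish (or recall) that $\PSh(\cC_0)$ is $k$-linear semigroupal via Day convolution of the semigroupal product on $\cC_0$, that the Yoneda embedding $Y_{\cC_0}$ is semigroupal and fully faithful, and that $\PSh(F_0)$ is the left Kan extension, which is semigroupal because $F_0$ is; check that the diagram in \Cref{st:setup} commutes up to the structure isomorphisms, so that $(\nabla)$ really is a triangle of complexes. Second, for the map $C_\DY^\bullet(\PSh(F_0)) \to C_\DY^\bullet(F_0)$: this is $\vfi_\square$ for the square with verticals $Y_{\cC_0}$ and $Y_{\cD_0}$, both fully faithful. Show it is an isomorphism by constructing the inverse explicitly: given a DY $n$-cochain $\eta$ on $F_0$, i.e. a natural family $\eta_{X_1,\dots,X_n}: F_0(X_1\otimes\cdots\otimes X_n) \to F_0(X_1\otimes\cdots\otimes X_n)$ for $X_i \in \cC_0$, one must produce a natural family indexed by presheaves $P_1,\dots,P_n$. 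The crucial observation is that every presheaf is canonically a colimit of representables, that $\otimes_{\mathrm{Day}}$ and $\PSh(F_0)$ both preserve colimits in each variable, and that a natural endomorphism of a functor valued in a cocomplete category is determined by its values on a colimit-dense subcategory -- so $\eta$ extends uniquely by the formula $\tilde\eta_{P_1,\dots,P_n} \coloneqq \colim_{Y(X_i)\to P_i} \eta_{X_1,\dots,X_n}$, and naturality plus compatibility with the coface maps is inherited because Day convolution is computed pointwise as a colimit of the representable case. Third, for the map $C_\DY^\bullet(\PSh(F_0)) \to C_\DY^\bullet(F)$: this is $\vfi_\square$ for the square with verticals $j_\cC$ and $j_\cD$, again both fully faithful; the same colimit-density argument applies verbatim, since $\cC_0 \subseteq \cC \subseteq \PSh(\cC_0)$ and $\cC$ contains the representables, so $\cC$ is still colimit-dense in $\PSh(\cC_0)$ in the weak sense needed (or: restrict from $\PSh(\cC_0)$ to $\cC$ and then the composite $C_\DY^\bullet(\PSh(F_0))\to C_\DY^\bullet(F)\to C_\DY^\bullet(F_0)$ agrees with the Yoneda restriction). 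Fourth, conclude from commutativity of $(\nabla)$.

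The main obstacle I expect is the careful bookkeeping in step two: verifying that the extended cochain $\tilde\eta$ is genuinely \emph{natural} in each presheaf variable and that extension commutes with the coface operators $\partial_i^n$ -- in particular that $\partial_0$ and $\partial_{n+1}$, which involve $\id \otimes (-)$ and $(-)\otimes\id$ for the Day product, interact correctly with the colimit formula. This hinges on the fact that $\otimes_{\mathrm{Day}}$ preserves colimits separately in each argument and that $\PSh(F_0)$ (as a left adjoint / left Kan extension) preserves all colimits, so both sides of each coface relation are colimits of their representable restrictions; once this is set up cleanly, uniqueness of colimit-extensions forces the identity. A secondary technical point is non-strictness: $\PSh(\cC_0)$ with Day convolution is typically not strict, so one should either work with the padded DY complex from \Cref{r:non-strict} throughout, or strictify at the outset -- I would state at the beginning of the proof that we use the padded version and that all structure isomorphisms ($a$, $\Phi$, the associativity/unit constraints of Day convolution) are transported along the colimit formula without obstruction.
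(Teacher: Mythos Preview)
Your opening sentence overstates: full faithfulness of $S$ and $T$ alone does not make $\vfi_\square$ an isomorphism (every functor is ``essentially surjective onto its image'', so that clause is empty), but you immediately correct course and identify the real content as extending cochains along colimit-density, so the plan is sound. The core idea---extend $\eta \in C_\DY^n(F_0)$ to presheaves by a colimit formula---is the paper's as well, but the paper packages it so as to dissolve precisely the bookkeeping you flag as the main obstacle. Instead of working directly in $\PSh(\cC_0)$ with Day convolution and its associators, the paper introduces the \emph{strict} free cocompletion $\hat{\cC}$, whose objects are literally small diagrams $\bDelta: \cI \to \cC$ and whose semigroupal product (product of index categories followed by $\otimes$) is strict; there the extension $\hat\eta_{\bGamma,\bDelta} \coloneqq (\eta_{\bGamma(i),\bDelta(j)})_{i,j}$ is a tautology, naturality is a one-square check against the lim-colim description of morphisms, and the semigroupal equivalence $\hat{\cC} \simeq \PSh(\cC)$ then transports the result. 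The paper also decouples the two directions of bijectivity: it first proves all three arrows in $(\nabla)$ injective by a uniqueness-on-dense-subcategory argument (\Cref{p:injective}), and then only needs the left arrow $C_\DY^\bullet(\PSh(F_0)) \to C_\DY^\bullet(F_0)$ to be an isomorphism---commutativity of $(\nabla)$ together with injectivity of the remaining arrow forces the other two. Your direct route is viable but requires exactly the coherence verifications you anticipate; the $\hat{\cC}$ device is what makes them evaporate.
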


In particular, this is true whenever $F$ arises from $F_0$ as an additive or
pseudo-abelian envelope, $\Ind$-completion, etc.

\begin{definition}
    Let $G: \cA \to \cB$ be an arbitrary functor and $\cI$ a category. We say
    that $G$ \textbf{commutes with limits of shape $\cI$} if for all diagrams
    $\bDelta: \cI \to \cA$ for which the limit $\lim_{i \in \cI} \bDelta(i)$
    exists in $\cC$, the limit $\lim_{i \in \cI} G(\bDelta(i))$ exists in $\cB$
    and the natural morphism
    \[
        G\left(\lim_{i \in \cI} \bDelta(i)\right) \longrightarrow \lim_{i \in \cI}
        G(\bDelta(i))
    \]
    is an isomorphism. An analogous definition is made for colimits.
\end{definition}

\subsection{Categories of presheaves} \label{ss:presheaves}

In this subsection, we recall basic facts about the category of presheaves
$\PSh(\cC)$, which realizes the free cocompletion of $\cC$. We also consider an
alternative realization $\hat{\cC}$. We will later equip both constructions
with a semigroupal structure and extend cochains $\eta \in \CDY^n(\cC)$ to
cochains over the free cocompletion. To this end, $\hat{\cC}$ will give us more
control than $\PSh(\cC)$.

In the following, $\cV$ will be a complete and cocomplete closed symmetric
monoidal category, i.e.~$\cV$ has all small limits and colimits and the
functors $X \otimes -$ and $- \otimes X$ for $X \in \cV$ have right adjoints
(the internal hom) and therefore commute with colimits. We will be particularly
interested in the case where $\cV = \CVec_k$ is the category of vector spaces
over the field $k$. In addition, we fix a small $\cV$-enriched category $\cC$.
A comprehensive reference for enriched category theory is~\cite{kelly}.

The following is standard terminology: A $\cV$-enriched functor $\cC^\op \to
\cV$ is called a \textit{presheaf} on $\cC$. The $\cV$-enriched category of
presheaves and $\cV$-enriched natural transformations between them is denoted
by $\PSh_\cV(\cC)$. The functor
\[
    Y: \cC \to \PSh_\cV(\cC), \qquad X \mapsto \Hom_\cC(-, X)
\]
is called the \textit{Yoneda embedding}. Presheaves in the image of $Y$ are
called \textit{representable}.

\begin{proposition}[{Enriched Yoneda lemma, see \cite[\S~3.10]{kelly}}]
    \label{p:enriched-yoneda}
    For a presheaf $P: \cC^\op \to \cV$, there is an isomorphism
    \begin{equation} \label{e:coyoneda}
        \Hom_{\PSh_\cV(\cC)}(Y(X), P) \cong P(X) \cong \int^{A \in \cC} Y(A)(X)
        \otimes P(A)
    \end{equation}
    in $\cV$ natural in $X \in \cC$.
\end{proposition}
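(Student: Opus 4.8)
The plan is to prove the two isomorphisms in~\eqref{e:coyoneda} separately: the left one is the enriched Yoneda lemma proper, the right one the enriched co-Yoneda (``density'') formula, and both ultimately rest on the same computation. I would first record the standard description of hom-objects in the enriched presheaf category: for $\cV$-functors $Q,P\colon\cC^\op\to\cV$,
\[
    \Hom_{\PSh_\cV(\cC)}(Q,P)\;=\;\int_{A\in\cC}\Hom_\cV\bigl(Q(A),P(A)\bigr),
\]
the end that computes the object of $\cV$-natural transformations; it exists since $\cV$ is complete and $\cC$ is small. Taking $Q=Y(X)=\Hom_\cC(-,X)$, so that $Q(A)=\Hom_\cC(A,X)$, reduces the left isomorphism of~\eqref{e:coyoneda} to the claim $\int_{A}\Hom_\cV\bigl(\Hom_\cC(A,X),P(A)\bigr)\cong P(X)$.

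For this I would build two mutually inverse morphisms in $\cV$. One direction, $E$, is obtained by composing the end's projection at $A=X$ with the map $\Hom_\cV(\Hom_\cC(X,X),P(X))\to\Hom_\cV(\one,P(X))\cong P(X)$ induced by the unit $j_X\colon\one\to\Hom_\cC(X,X)$ (the enriched name of $\id_X$). The other, $I\colon P(X)\to\int_{A}\Hom_\cV(\Hom_\cC(A,X),P(A))$, comes from transposing, through the closed structure of $\cV$, the enriched action of $P$ with signature $\Hom_\cC(A,X)\to\Hom_\cV(P(X),P(A))$ into a family $P(X)\to\Hom_\cV(\Hom_\cC(A,X),P(A))$, which one checks to be $\cV$-natural in $A$ and hence to factor through the end. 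The identity $E\circ I=\id_{P(X)}$ falls out of the unit axiom for the $\cV$-functor $P$. The reverse identity $I\circ E=\id$ carries the real content and is the step I expect to be the main obstacle: it is the enriched form of the classical fact that a natural transformation out of a representable is pinned down by the image of the identity, and in the enriched setting it must be extracted from the wedge condition defining the end rather than from a chase of elements. I would dispatch it by testing against an arbitrary object $V$ of $\cV$: the representable $\cV_0(V,-)$ preserves the end, and the closed structure rewrites $\cV_0\bigl(V,\int_A\Hom_\cV(\Hom_\cC(A,X),P(A))\bigr)$ as the set of $\cV$-natural families $V\otimes\Hom_\cC(-,X)\Rightarrow P$; the classical argument now applies verbatim to identify this set with $\cV_0(V,P(X))$, naturally in $V$, and Yoneda in the underlying category $\cV_0$ upgrades this to the desired isomorphism in $\cV$. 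Naturality in $X$ is checked by the same bookkeeping.

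For the right isomorphism $P(X)\cong\int^{A\in\cC}Y(A)(X)\otimes P(A)$, note first that $Y(A)(X)=\Hom_\cC(X,A)$, so the coend is $\int^{A}\Hom_\cC(X,A)\otimes P(A)$; it exists since $\cV$ is cocomplete and $\cC$ is small. I would dualize: for an arbitrary $V$ in $\cV$, applying $\Hom_\cV(-,V)$ turns the coend into an end and, with the closed adjunction of $\cV$, gives
\[
    \Hom_\cV\!\left(\int^{A}\Hom_\cC(X,A)\otimes P(A),\;V\right)\;\cong\;\int_{A}\Hom_\cV\bigl(\Hom_\cC(X,A),\,\Hom_\cV(P(A),V)\bigr).
\]
The right-hand side is the hom-object $\Hom_{\PSh_\cV(\cC^\op)}(\Hom_\cC(X,-),\Hom_\cV(P(-),V))$ between the representable $\Hom_\cC(X,-)$ and the $\cV$-functor $\Hom_\cV(P(-),V)\colon\cC\to\cV$; so the Yoneda lemma just proved, applied to the small $\cV$-category $\cC^\op$, identifies it with $\Hom_\cV(P(X),V)$, naturally in $V$, and Yoneda in $\cV_0$ yields $\int^{A}\Hom_\cC(X,A)\otimes P(A)\cong P(X)$ in $\cV$, naturally in $X$. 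Composing the two isomorphisms gives~\eqref{e:coyoneda}.
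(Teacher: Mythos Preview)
The paper does not give its own proof of this proposition; it is recorded with a citation to Kelly and then used as a black box. Your argument is correct and is essentially the standard one found in Kelly: compute the hom-object as an end, produce the two maps from the unit $j_X$ and from the enriched action of $P$, verify the triangle identities (with the ``testing against $V$'' reduction handling the non-obvious direction), and then derive the co-Yoneda half by dualizing through the closed structure and invoking the Yoneda half for $\cC^\op$. There is nothing in the paper to compare against beyond the reference.
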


\begin{proposition}[{Coends as coequalizers see \cite[\S~2.1]{kelly}}]
    \label{p:coend}
    Let $F: \cC^\op \times \cC \to \cV$ be a $\cV$-enriched functor. There is a
    coequalizer diagram
    \[\begin{tikzcd}\displaystyle
        \bigsqcup_{A,B \in \cC} \Hom_\cC(B, A) \otimes F(A, B)
        \arrow[shift left=.75ex]{r}\arrow[shift right=.75ex]{r} & \displaystyle
        \bigsqcup_{A \in \cC} F(A, A)
        \arrow{r} & \displaystyle \int^{A \in \cC} F(A, A)
    \end{tikzcd}\]
\end{proposition}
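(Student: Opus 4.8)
The plan is to recognize the displayed diagram as the standard presentation of the coend through its universal property as the universal cowedge, and then to verify that the coequalizer condition is literally the enriched extranaturality condition. Everything is formal, so the work lies entirely in matching up two descriptions of the same data.

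First I would recall the defining universal property of the coend $\int^{A} F(A,A)$: for every $V \in \cV$ it corepresents the object of \emph{cowedges} from $F$ to $V$, that is, families of morphisms $\omega_A \colon F(A,A) \to V$ (one for each $A \in \cC$) that are extranatural in $A$. In the $\cV$-enriched setting, extranaturality is the requirement that for all $A, B \in \cC$ the two morphisms
\[
    \Hom_\cC(B,A) \otimes F(A,B) \longrightarrow V
\]
coincide, where the first is the contravariant action of $F$ in its first slot $\Hom_\cC(B,A)\otimes F(A,B) \to F(B,B)$ followed by $\omega_B$, and the second is the covariant action of $F$ in its second slot $\Hom_\cC(B,A)\otimes F(A,B) \to F(A,A)$ followed by $\omega_A$. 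These two action maps arise from the enriched functoriality of $F$, namely from its structure morphisms on $\Hom_{\cC^\op \times \cC}((A,B),(A',B')) = \Hom_\cC(A',A)\otimes \Hom_\cC(B,B')$, by inserting identities in one factor and transposing across the tensor--hom adjunction of the closed category $\cV$.

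Next I would match this data against the coequalizer. Since $\cV$ is cocomplete, the two coproducts and the coequalizer all exist. By the universal property of $\bigsqcup_A F(A,A)$, a family $(\omega_A)_A$ is the same datum as a single morphism $\omega \colon \bigsqcup_A F(A,A) \to V$; and by the universal property of $\bigsqcup_{A,B}\Hom_\cC(B,A)\otimes F(A,B)$, the two parallel arrows of the diagram are exactly the assembly of the two families of action maps above (the first landing in the $F(B,B)$-summand, the second in the $F(A,A)$-summand). Hence $\omega$ coequalizes the two arrows if and only if, summand by summand, the two composites out of each $\Hom_\cC(B,A)\otimes F(A,B)$ agree after postcomposition with $\omega$ -- which is precisely the condition that $(\omega_A)_A$ be a cowedge.

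Therefore the functor sending $V \in \cV$ to the object of cowedges from $F$ to $V$ is corepresented simultaneously by the coend $\int^A F(A,A)$ and by the coequalizer $Q$ of the displayed pair; by the Yoneda lemma these corepresenting objects are canonically isomorphic, and the coequalizing map $\bigsqcup_A F(A,A) \to Q$ is identified with the universal cowedge $\omega_A \colon F(A,A) \to \int^A F(A,A)$. The only point requiring care is the bookkeeping of the second paragraph: one must check that the enriched functoriality of $F$ produces exactly the two action maps and that enriched extranaturality is their equality after composition with the $\omega$'s. This is a routine transposition across the closed structure of $\cV$, and once the directions of the two actions are pinned down correctly, the remainder is a purely formal comparison of universal properties.
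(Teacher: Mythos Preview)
Your argument is correct and is the standard one: you identify the universal property of the coequalizer with that of the universal cowedge and conclude by Yoneda. The paper does not give its own proof of this proposition but simply cites it from \cite[\S~2.1]{kelly}, so there is nothing to compare against beyond noting that your write-up is exactly the argument Kelly records.
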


\begin{proposition}[Pointwise colimits, see {\cite[\S~3.3]{kelly}}]
    \label{p:pointwise}
    Let $F: \cI \to \PSh_\cV(\cC)$ be a small diagram of presheaves. For $X \in
    \cC$, we have an isomorphism
    \[
        \left(\colim_{i \in \cI} F(i)\right)(X) \cong \colim_{i \in \cI}
        F(i)(X).
    \]
    in $\cV$, provided the colimit on the right exists.
\end{proposition}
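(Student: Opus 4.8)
The plan is to exhibit the pointwise formula as an explicit colimit in $\PSh_\cV(\cC)$, by equipping the candidate presheaf with a $\cV$-functor structure and then verifying the universal property directly. Concretely, define $P$ on objects by $P(X) \coloneqq \colim_{i \in \cI} F(i)(X)$, the colimit being taken in $\cV$ (which exists by hypothesis), and write $\lambda_i^X \colon F(i)(X) \to P(X)$ for the legs of this colimit cocone. The goal is to show that $P$ underlies a $\cV$-enriched presheaf, that the $\lambda_i$ assemble into a cocone $F(i) \Rightarrow P$ in $\PSh_\cV(\cC)$, and that this cocone is universal; evaluating the resulting colimit at $X$ then yields precisely the claimed isomorphism.

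The first genuine step is to promote $P$ to a $\cV$-enriched functor $\cC^\op \to \cV$. Each presheaf $F(i)$ carries an action $a_i \colon \Hom_\cC(Y, X) \otimes F(i)(X) \to F(i)(Y)$ encoding its functoriality. Since $\cV$ is closed, the functor $\Hom_\cC(Y, X) \otimes (-)$ has a right adjoint (the internal hom) and therefore preserves this colimit, giving a canonical isomorphism $\Hom_\cC(Y, X) \otimes P(X) \cong \colim_{i} \bigl(\Hom_\cC(Y, X) \otimes F(i)(X)\bigr)$. The composites $\lambda_i^Y \circ a_i$ form a cocone over the right-hand diagram, so the universal property produces a unique morphism $a \colon \Hom_\cC(Y, X) \otimes P(X) \to P(Y)$ satisfying $a \circ (\id \otimes \lambda_i^X) = \lambda_i^Y \circ a_i$ for all $i$. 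The unit and composition axioms making $(P, a)$ a $\cV$-functor then follow formally: both sides of each axiom become equal after precomposition with the family $\id \otimes \lambda_i^X$, which are exactly the legs of the above colimit and hence jointly epimorphic, so the two sides agree by uniqueness in the universal property.

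The remaining step is the universal property in $\PSh_\cV(\cC)$. First, the same commuting-past-the-colimit argument shows each $\lambda_i$ is $\cV$-natural, so $(\lambda_i)_{i \in \cI}$ is a cocone of presheaves $F(i) \Rightarrow P$. Given any competing cocone $(\mu_i \colon F(i) \Rightarrow Q)_{i \in \cI}$, the components $\mu_i^X \colon F(i)(X) \to Q(X)$ form a cocone in $\cV$ for each $X$, inducing a unique $\phi^X \colon P(X) \to Q(X)$ with $\phi^X \circ \lambda_i^X = \mu_i^X$. One then checks that the family $(\phi^X)_X$ is itself $\cV$-natural---again by precomposing the relevant naturality square with $\id \otimes \lambda_i^X$ and invoking closedness of $\cV$ together with uniqueness---and it is manifestly the unique morphism of presheaves with $\phi \circ \lambda_i = \mu_i$. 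Hence $P$ together with $(\lambda_i)$ is the colimit of $F$ in $\PSh_\cV(\cC)$, and evaluation at $X$ recovers the asserted isomorphism.

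I expect the main obstacle to be bookkeeping rather than conceptual: every nontrivial verification---well-definedness of $a$, the $\cV$-functor axioms for $P$, and the $\cV$-naturality of the $\lambda_i$ and of $\phi$---reduces to the single structural fact that tensoring with a fixed object of $\cV$ preserves colimits, a consequence of $\cV$ being closed. The care required lies in drawing the enriched naturality and axiom diagrams correctly and in confirming at each point that the relevant family of maps is jointly epimorphic, so that uniqueness in the colimit may legitimately be applied; once this scaffolding is in place, each identity is forced.
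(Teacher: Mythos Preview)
The paper does not actually prove this proposition; it is stated with a reference to Kelly and no proof environment follows. Your direct argument is correct and is essentially the standard one: the only structural input is that $\cV$ is closed, so $(-) \otimes V$ preserves colimits, which is exactly what is needed to push the $\cV$-functor action through the pointwise colimit and to verify all the enriched naturality conditions by joint epimorphicity of the colimit legs. One small remark: the proviso ``provided the colimit on the right exists'' in the statement is in fact automatic here, since the paper's standing hypothesis is that $\cV$ is cocomplete; your proof implicitly uses that the pointwise colimit exists for \emph{every} $X \in \cC$, not just a single one, which is what is needed to define $P$ as a presheaf, and cocompleteness of $\cV$ guarantees this.
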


\begin{corollary} \label{c:y-dense}
    For $\cV = \CVec_k$, every presheaf is isomorphic to a colimit of
    representable presheaves.
\end{corollary}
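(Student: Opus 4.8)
The plan is to combine the enriched Yoneda lemma (\Cref{p:enriched-yoneda}) with the coend-as-coequalizer presentation (\Cref{p:coend}) and the fact that colimits of presheaves are computed pointwise (\Cref{p:pointwise}). Concretely, for a $k$-linear presheaf $P : \cC^\op \to \CVec_k$, the enriched Yoneda lemma gives a natural isomorphism
\[
    P(X) \cong \int^{A \in \cC} Y(A)(X) \otimes_k P(A)
\]
for each $X \in \cC$. The first step is to interpret the right-hand side as the value at $X$ of a colimit of representable presheaves: namely, apply \Cref{p:coend} to the functor $(A, B) \mapsto Y(B)(X) \otimes_k P(A)$ (or, more cleanly, set up the coequalizer in $\PSh_{\CVec_k}(\cC)$ itself so that its value at $X$ is the coequalizer of the corresponding diagram of vector spaces). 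In the category $\CVec_k$, every vector space $V$ is a direct sum of copies of $k$, so each term $Y(A)(X) \otimes_k P(A)$, and hence each coproduct $\bigsqcup_{A} Y(A) \otimes_k P(A)$ appearing in the coequalizer, is a coproduct of copies of the representable $Y(A)$; here I use that $\CVec_k$ being cocomplete, these (possibly infinite) coproducts exist in $\PSh_{\CVec_k}(\cC)$.

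The second step is to assemble this into a genuine colimit diagram of representables whose colimit is $P$. Since a coequalizer is a colimit and a coproduct indexed by a set (or by $\Hom_\cC(B,A)$-many, which after choosing a basis becomes set-indexed) is a colimit, the whole coend expression
\[
    \int^{A} Y(A) \otimes_k P(A)
    \;=\; \mathrm{coeq}\!\left(
        \textstyle\bigsqcup_{A,B} \Hom_\cC(B,A) \otimes_k P(A)
        \;\rightrightarrows\;
        \bigsqcup_{A} P(A)\text{-many copies of } Y(A)
    \right)
\]
is a colimit of a diagram all of whose objects are (coproducts of) representables. Rewriting the coproducts as colimits over discrete categories, one gets a single diagram $\bGamma : \cI \to \PSh_{\CVec_k}(\cC)$ landing in the full subcategory of representable presheaves, with $\colim \bGamma \cong \int^{A} Y(A) \otimes_k P(A)$. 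By \Cref{p:pointwise} this colimit is computed pointwise, so evaluating at $X$ recovers $\int^{A} Y(A)(X) \otimes_k P(A) \cong P(X)$ naturally in $X$; hence $\colim \bGamma \cong P$ in $\PSh_{\CVec_k}(\cC)$, which is the claim.

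I expect the only real subtlety to be bookkeeping rather than mathematics: making sure that the coequalizer/coproduct presentation of the coend is taken in the presheaf category (so that \Cref{p:pointwise} applies) and that the diagram is honestly indexed by a small category once one replaces the hom-objects $\Hom_\cC(B,A)$ and the spaces $P(A)$ by bases. All of this is routine over $\CVec_k$ because every object is free; the essential input is simply that a vector space is a colimit (coproduct) of copies of $k$, which is exactly what fails for a general enriching category $\cV$ and is why the corollary is stated for $\cV = \CVec_k$. No step should present a genuine obstacle beyond this.
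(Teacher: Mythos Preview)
Your proposal is correct and follows essentially the same route as the paper: combine the enriched Yoneda coend formula with the coequalizer presentation of coends, replace each tensor $Y(A)\otimes_k V$ by a coproduct of copies of $Y(A)$ using a basis of $V$, and then invoke pointwise computation of colimits to recognize the resulting diagram of representables as having colimit $P$. The paper's proof is terser but makes exactly the same moves, including the explicit observation that choosing bases is what makes this work for $\cV = \CVec_k$.
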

\begin{proof}
    Fix $P \in \PSh_\cV(\cC)$ and let $X \in \cC$. Combining
    \Cref{p:enriched-yoneda} and \Cref{p:coend}, we have a coequalizer diagram
    \begin{equation*}\begin{tikzcd} \displaystyle \label{e:psh-as-coeq}
        \bigsqcup_{A,B \in \cC}Y(A)(X) \otimes \Hom_\cC(A, B) \otimes P(A)
        \arrow[shift left=.75ex]{r}\arrow[shift right=.75ex]{r} & \displaystyle
        \bigsqcup_{A \in \cC} Y(A)(X) \otimes P(A) \arrow{r} & \displaystyle
        P(X).
    \end{tikzcd}\end{equation*}
    Moreover, for $A \in \cC$ and $V \in \CVec_k$, we can express $Y(A)(X)
    \otimes V$ as a coproduct $\bigsqcup_{i \in I} Y(A)(X)$, where $I$ is a basis
    of $V$. Thus $P(X)$ is expressed as the colimit of a
    diagram in $\CVec_k$ involving only $Y(A)(X)$ (for different $A \in \cC$),
    and the shape of this diagram depends only on $P$. Since colimits in
    $\PSh_\cV(\cC)$ are computed pointwise (\Cref{p:pointwise}), this realizes
    $P$ as a colimit of representable presheaves $Y(A)$.
\end{proof}

\begin{remark}
    The proof of \Cref{c:y-dense} depends crucially on the fact that we can
    choose a basis, or more generally that the monoidal unit in $\CVec_k$ is a
    generator. This implies that the coend expression~\eqref{e:coyoneda} can be
    turned into an (ordinary) colimit.

    In enriched category theory, the correct notion of a colimit is a
    \textit{weighted colimit} (see e.g.~\cite[Chapter~7]{Riehl_2014}), as
    opposed to an ordinary colimit (also called \textit{conical colimit} in this
    context). The coend~\eqref{e:coyoneda} is an example of such a
    weighted colimit: A presheaf $P$ is the colimit of the Yoneda embedding $Y:
    \cC \to \PSh_\cV(\cC)$ weighted by $P$, and in this sense, every presheaf
    is a (weighted) colimit of representable presheaves
    (see~\cite[Chapter~4]{Loregian_2021}). However, for categories $\cV$ where
    the unit object is a generator (such as $\cV = \CSet$ or $\cV = \CVec_k$),
    weighted limits can be expressed as conical limits as in \Cref{c:y-dense}.
\end{remark}

\Cref{c:y-dense} allows for a different description of $\PSh_\cV(\cC)$, namely
to identify diagrams in $\cC$ with their colimit in $\PSh_\cV(\cC)$. This
construction is known as the \textit{strict free cocompletion} (see
e.g.~\cite{BGP}) and reflects two common descriptions of the $\Ind$-completion:
either as the category of filtered diagrams $\cI \to \cC$ or as the subcategory
of $\PSh_{\CVec}(\cC)$ consisting of filtered colimits of representables.

\begin{definition}
    The \textbf{strict free cocompletion} $\hat{\cC}$ of $\cC$ is the
    $\cV$-enriched category defined as follows.

    \begin{itemize}
        \item The objects are the small diagrams in $\cC$, i.e.~functors
            $\bDelta: \cI \to \cC$ with $\cI$ small.
        \item Morphisms between small diagrams $\bGamma: \cI \to \cC, \bDelta:
            \cJ \to \cC$ are defined using the lim-colim-formula
            \begin{equation} \label{e:lim-colim}
                \Hom_{\hat{\cC}}(\bGamma, \bDelta) \coloneqq \lim_i \colim_j
                \Hom_{\cC}(\bGamma(i), \bDelta(j))
            \end{equation}
            with limits and colimits taken in $\cV$.
        \item Composition is inherited from the composition in $\cC$.
    \end{itemize}
\end{definition}

\begin{remark} \label{r:lim-colim}
    Let us make the lim-colim-formula \eqref{e:lim-colim} more explicit for
    $\cV = \CVec_k$. Consider small diagrams $\bGamma: \cI \to \cC$ and $\bDelta: \cJ \to
    \cC$ and fix $i \in \cI$. Then
    \[
        \colim_j \Hom_\cC(\bGamma(i), \bDelta(j)) = \frac{\bigoplus_j \Hom_\cC(\bGamma(i),
        \bDelta(j))}{R}
    \]
    where $R$ is the subspace spanned by $\vfi - \bDelta(g)\psi$ for all morphisms
    $g: j_1 \to j_2$ in $\cJ$ and all $\vfi: \bGamma(i) \to \bDelta(j_2), \psi: \bGamma(i) \to
    \bDelta(j_1)$ in $\cC$. Finally, we let $i$ vary and obtain that the limit
    \[
        \lim_j \colim_j \Hom_\cC(\bGamma(i), \bDelta(j)) \subseteq \prod_i \colim_j
        \Hom_\cC(\bGamma(i), \bDelta(j))
    \]
    consists of collections $\left(\vfi_i: \bGamma(i) \to \bDelta(j_i)\right)_{i \in \cI}$
    such that for all morphisms $f: i_1 \to i_2$ in $\cI$, $\vfi_{i_1} =
    \vfi_{i_2}\bGamma(f)$ in the colimit.
\end{remark}

\begin{remark}
    For $\cI = \{*\}$, we can always realize $\cC$ as the full subcategory of
    $\hat{\cC}$ consisting of diagrams of shape $\cI$.
\end{remark}

Later, we want $\hat{\cC}$ and $\PSh_\cV(\cC)$ to be equivalent. This works
because the lim-colim-formula also holds in $\PSh_\cV(\cC)$:

\begin{proposition} \label{p:ff-psh}
    There is a fully faithful $\cV$-enriched functor $L: \hat{\cC} \to
    \PSh_\cV(\cC)$ sending a diagram $\bDelta: \cI \to \cC$ to its colimit
    $\colim_i Y(\bDelta(i))$.
\end{proposition}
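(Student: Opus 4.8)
The plan is to prove \Cref{p:ff-psh} by exhibiting the functor $L$ explicitly and then verifying full faithfulness by a direct computation that reduces the $\Hom$-sets on both sides to the same lim-colim expression. First I would define $L$ on objects by $L(\bDelta) \coloneqq \colim_{i \in \cI} Y(\bDelta(i))$, the colimit being taken in $\PSh_\cV(\cC)$, which exists because $\PSh_\cV(\cC)$ is cocomplete. On morphisms, $L$ is forced by functoriality: a morphism $\bGamma \to \bDelta$ in $\hat{\cC}$, described concretely in \Cref{r:lim-colim} as a compatible collection $(\vfi_i \colon \bGamma(i) \to \bDelta(j_i))_i$, induces via $Y$ and the colimit cocones a map $\colim_i Y(\bGamma(i)) \to \colim_i Y(\bDelta(j_i)) \to \colim_j Y(\bDelta(j))$; one checks this is well-defined and functorial on the nose, using that composition in $\hat{\cC}$ is inherited from $\cC$.

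The heart of the argument is the full faithfulness, and the clean way to see it is to compute $\Hom_{\PSh_\cV(\cC)}(L(\bGamma), L(\bDelta))$ using that $\Hom_{\PSh_\cV(\cC)}(-, P)$ sends colimits in the first variable to limits: thus
\[
    \Hom_{\PSh_\cV(\cC)}\!\left(\colim_i Y(\bGamma(i)), L(\bDelta)\right) \cong \lim_i \Hom_{\PSh_\cV(\cC)}\!\left(Y(\bGamma(i)), L(\bDelta)\right).
\]
By the enriched Yoneda lemma (\Cref{p:enriched-yoneda}), each term is $L(\bDelta)(\bGamma(i)) = \left(\colim_j Y(\bDelta(j))\right)(\bGamma(i))$, and since colimits of presheaves are computed pointwise (\Cref{p:pointwise}) this equals $\colim_j Y(\bDelta(j))(\bGamma(i)) = \colim_j \Hom_\cC(\bGamma(i), \bDelta(j))$. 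Assembling these identifications gives
\[
    \Hom_{\PSh_\cV(\cC)}(L(\bGamma), L(\bDelta)) \cong \lim_i \colim_j \Hom_\cC(\bGamma(i), \bDelta(j)) = \Hom_{\hat{\cC}}(\bGamma, \bDelta),
\]
which is exactly the lim-colim-formula \eqref{e:lim-colim} defining morphisms in $\hat{\cC}$. One then checks that under these identifications the resulting bijection agrees with the action of $L$ on morphisms; this follows by tracing through how Yoneda and the colimit cocones interact, i.e.\ by chasing the image of a generator.

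The main obstacle I anticipate is not any single step but keeping the naturality and enrichment bookkeeping honest: the displayed isomorphisms must be checked to be $\cV$-natural (not merely bijections of underlying sets), and one must confirm that the composite identification is genuinely the map induced by $L$ rather than merely an abstract isomorphism of $\Hom$-objects. The pointwise-colimit step (\Cref{p:pointwise}) requires the relevant colimits to exist, which is guaranteed since $\PSh_\cV(\cC)$ is cocomplete, but when $\cV$ is a general complete cocomplete closed symmetric monoidal category one should be slightly careful that ``$\Hom(-,P)$ takes colimits to limits'' is the enriched statement, which holds because representables corepresent and colimits in functor categories are pointwise. Once the enriched Yoneda lemma and pointwise colimits are invoked in the enriched sense, the computation goes through verbatim, and full faithfulness of $L$ drops out; fully faithfulness is all that is claimed, so nothing further (such as essential surjectivity onto a recognizable subcategory) is needed here.
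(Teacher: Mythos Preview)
Your proposal is correct and follows essentially the same route as the paper: both compute $\Hom_{\PSh_\cV(\cC)}(L(\bGamma), L(\bDelta))$ by pulling the colimit out of the first slot as a limit, applying the enriched Yoneda lemma, and then using that colimits of presheaves are pointwise to recover the lim-colim formula~\eqref{e:lim-colim}. If anything, you are slightly more careful than the paper in spelling out the action of $L$ on morphisms and flagging that the abstract isomorphism must be identified with this action.
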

\begin{proof}
    Let $\bGamma: \cI \to \cC$ and $\bDelta: \cJ \to \cC$ be small diagrams. Then
    \[\begin{aligned}
        &\;\, \Hom_{\PSh_\cV(\cD)}\left(\colim_i Y(\bGamma(i)), \colim_j Y(\bDelta(j))\right) \\
        \cong &\; \lim_i \Hom_{\PSh(\cV(\cC))}(Y(\bGamma(i)), \colim_j Y(\bDelta(j))) \\
        \cong &\; \lim_i \left(\colim_j Y(\bDelta(j))\right)(\bGamma(i)) \\
        \cong &\; \lim_i \colim_j Y(\bDelta(j)(\bGamma(i)) \\
        =&\; \lim_i \colim_j \Hom_\cC(\bGamma(i), (\bDelta(j)).
    \end{aligned}\]
    The first isomorphism is due to the definition of limits and colimits; the
    second is the Yoneda lemma. The third isomorphism uses that colimits of
    presheaves are computed pointwise (\Cref{p:pointwise}). Thus, the
    assignment $L: \bDelta \mapsto \colim_i Y(\bDelta(i))$ extends to a fully
    faithful functor $L: \hat{\cC} \to \PSh_\cV(\cC)$.
\end{proof}

With the following corollary, we have reached the milestone of finding an
alternative description of $\PSh_\cV(\cC)$ using only diagrams. For $\cV =
\CSet$, this appears as \cite[Proposition~3.13]{BGP}.

\begin{corollary} \label{c:equiv-psh}
    For $\cV = \CVec_k$, the functor $L: \hat{\cC} \to \PSh_{\cV}(\cC)$ is an
    equivalence of categories.
\end{corollary}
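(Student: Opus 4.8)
The plan is to invoke the standard criterion that a functor is an equivalence of categories precisely when it is fully faithful and essentially surjective. Full faithfulness of $L$ has already been established in \Cref{p:ff-psh} (and for $\cV = \CVec_k$ in particular), so the only remaining task is to check that $L$ is essentially surjective, i.e.~that every presheaf $P \in \PSh_\cV(\cC)$ is isomorphic to $L(\bDelta) = \colim_i Y(\bDelta(i))$ for some small diagram $\bDelta \colon \cI \to \cC$.

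For this I would appeal directly to \Cref{c:y-dense}: for $\cV = \CVec_k$, every presheaf $P$ is isomorphic to a colimit of representable presheaves, say $P \cong \colim_{i \in \cI} Y(A_i)$ over some small index category $\cI$, where each $A_i$ is an object of $\cC$. The one point to verify carefully is that this colimit diagram $\cI \to \PSh_\cV(\cC)$ actually factors through the Yoneda embedding $Y$, so that it defines an object $\bDelta \colon \cI \to \cC$ of $\hat{\cC}$; this is where the argument has its only real content. Inspecting the proof of \Cref{c:y-dense}, the diagram whose colimit computes $P$ has all objects of the form $Y(A)$ and all transition morphisms induced (via $Y$) by the coproduct/coequalizer structure, i.e.~by morphisms of $\cC$; since $Y$ is fully faithful, each transition morphism $Y(A) \to Y(B)$ is $Y(f)$ for a unique $f \colon A \to B$ in $\cC$, and these assemble into a functor $\bDelta \colon \cI \to \cC$. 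Then $L(\bDelta) = \colim_i Y(\bDelta(i)) \cong P$, so $P$ lies in the essential image of $L$.

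Combining full faithfulness from \Cref{p:ff-psh} with the essential surjectivity just obtained, $L \colon \hat{\cC} \to \PSh_\cV(\cC)$ is an equivalence of $\cV$-enriched categories. The main obstacle, such as it is, is purely bookkeeping: making sure the diagram extracted from \Cref{c:y-dense} is genuinely a diagram in $\cC$ rather than merely in $\PSh_\cV(\cC)$; everything else is formal.
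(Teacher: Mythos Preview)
Your proposal is correct and follows exactly the same approach as the paper: full faithfulness from \Cref{p:ff-psh} combined with essential surjectivity from \Cref{c:y-dense}. Your additional remarks about the diagram from \Cref{c:y-dense} factoring through the Yoneda embedding simply make explicit a detail the paper leaves implicit.
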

\begin{proof}
    By \Cref{p:ff-psh}, $L$ is fully faithful. It is essentially surjective by
    \Cref{c:y-dense}.
\end{proof}

\subsection{Presheaves on semigroupal categories} \label{ss:finish}

The purpose of this subsection is to establish an equivalence $\hat{\cC} \cong
\PSh(\cC)$ as \textit{semigroupal} categories. This will imply that the
Davydov-Yetter complex is invariant under the free cocompletion. We only
consider categories enriched over $\cV = \CVec_k$, i.e.~$k$-linear categories,
where $k$ is a field. Let $\cC$ be a small strict $k$-linear semigroupal category
and abbreviate $\PSh(\cC) \coloneqq \PSh_{\CVec_k}(\cC)$.

The following two statements arise from well-known results about the Day
convolution for monoidal categories by removing the requirement of unitality,
as is done in~\cite[Theorem~2.24]{matti}:

\begin{proposition}[see {\cite{DayThesis}}] \label{p:day-cocont}
    There is a closed $k$-linear semigroupal structure (the \textit{Day
    convolution} $\oday$) on $\PSh(\cC)$. In particular, $\oday$ commutes
    with colimits in both variables. Moreover, the Yoneda embedding $Y: \cC \to
    \PSh(\cC)$ can be equipped with the structure $(Y, \Phi)$ of a
    $k$-linear semigroupal functor.
\end{proposition}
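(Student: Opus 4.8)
The plan is to construct the Day convolution explicitly as a coend and then verify the required properties, observing that every part of the classical argument which invokes the unit object $\one_\cC$ may simply be discarded.

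First I would set, for $P, Q \in \PSh(\cC)$,
\[
    (P \oday Q)(X) \coloneqq \int^{A, B \in \cC} \Hom_\cC(X, A \otimes B)
    \otimes P(A) \otimes Q(B), \qquad X \in \cC;
\]
the coend exists because $\CVec_k$ is cocomplete (compare \Cref{p:coend}), and bifunctoriality in $P$ and $Q$ is automatic from that of coends. To obtain the associator I would invoke the Fubini theorem for coends to rewrite $(P \oday Q) \oday R$ and $P \oday (Q \oday R)$ as iterated coends over $\cC^{\times 3}$; since $\otimes$ on $\CVec_k$ is cocontinuous, the inner coends pull out and the co-Yoneda lemma collapses the auxiliary variables, giving natural isomorphisms
\[
    \big((P \oday Q) \oday R\big)(X) \cong \int^{A,B,C} \Hom_\cC\big(X, (A \otimes B) \otimes C\big) \otimes P(A) \otimes Q(B) \otimes R(C)
\]
and the analogous formula with the opposite bracketing, so the associator $a_\cC$ of $\cC$ induces the desired natural isomorphism $(P \oday Q) \oday R \cong P \oday (Q \oday R)$. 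The pentagon axiom for $\PSh(\cC)$ then follows from the pentagon axiom in $\cC$ together with the coherence of the Fubini isomorphisms. The essential point is that in the standard treatment (\cite{DayThesis}) the only axioms whose verification uses $\one_\cC$ and the representable $\Hom_\cC(-, \one_\cC)$ are the unit and triangle axioms; dropping unitality simply removes those, exactly as carried out in \cite[Theorem~2.24]{matti}. The $k$-linearity of $\oday$ and of the associator is evident from the construction.

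Next I would record cocontinuity: coends commute with colimits and $\otimes$ on $\CVec_k$ preserves colimits in each argument (it is closed), so $(- \oday Q)$ and $(P \oday -)$ preserve colimits; as colimits in $\PSh(\cC)$ are pointwise (\Cref{p:pointwise}), this is checked objectwise in $X$. Since $\PSh(\cC)$ is a locally presentable $k$-linear category, the adjoint functor theorem then furnishes right adjoints for $(- \oday Q)$ and $(P \oday -)$, so the semigroupal structure is closed; in particular $\oday$ commutes with colimits in both variables, as claimed. (Alternatively, one writes the internal hom explicitly as the end $\underline{\Hom}(Q, R)(X) = \int_{B \in \cC} \Hom_{\CVec_k}\big(Q(B), R(X \otimes B)\big)$ and checks the adjunction directly.)

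Finally, for the Yoneda embedding, applying the co-Yoneda lemma twice inside the defining coend yields, naturally in $Z$,
\[
    \big(Y(X_1) \oday Y(X_2)\big)(Z) = \int^{A,B} \Hom_\cC(Z, A \otimes B) \otimes \Hom_\cC(A, X_1) \otimes \Hom_\cC(B, X_2) \cong \Hom_\cC(Z, X_1 \otimes X_2),
\]
i.e.~a natural isomorphism $\Phi_{X_1 X_2} \colon Y(X_1) \oday Y(X_2) \xrightarrow{\ \cong\ } Y(X_1 \otimes X_2)$, with naturality in $X_1, X_2$ immediate from the co-Yoneda identifications. The hexagon axiom for $(Y, \Phi)$ then reduces, through these identifications, to the compatibility between the associator of $\PSh(\cC)$ and that of $\cC$ already established, and again no unit axiom enters. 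I expect the only genuine labor to be the coherence bookkeeping for the pentagon and hexagon; but this is precisely the content of the cited results, and the semigroupal case amounts to verifying strictly fewer diagrams than the monoidal one.
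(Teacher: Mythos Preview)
Your sketch is correct and follows the standard coend construction of Day convolution, which is precisely what the paper defers to by citing \cite{DayThesis} and \cite[Theorem~2.24]{matti}; the paper itself gives no proof of this proposition. In effect you have unpacked the cited references, and the observation that dropping unitality merely removes the unit and triangle axioms is exactly the point the paper attributes to \cite{matti}.
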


\begin{proposition}[see {\cite[Theorem~5.1]{kellyday}}]
    \label{p:free-semigroupal-cocompletion}
    Let $\cD$ be a cocomplete $k$-linear semigroupal category whose semigroupal
    product commutes with colimits in both variables. Then precomposition with
    the Yoneda embedding $Y: \cC \to \PSh(\cC)$ induces an equivalence
    \[
        \Fun_\otimes^{\text{cocont}}(\PSh(\cC), \cD) \longrightarrow
        \Fun_\otimes(\cC, \cD)
    \]
    between the category of cocontinuous $k$-linear semigroupal functors $\PSh(\cC)
    \to \cD$ and the category of $k$-linear semigroupal functors $\cC \to \cD$.
    Consequently, a $k$-linear semigroupal functor $F: \cC \to \cD$ induces
    a cocomplete $k$-linear semigroupal functor $\PSh(F): \PSh(\cC) \to \PSh(\cD)$.
\end{proposition}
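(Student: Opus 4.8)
The plan is to reduce to the classical unstructured universal property of $\PSh(\cC)$ and then transport the semigroupal structure along it, exploiting throughout that all functors in sight are cocontinuous and that cocontinuous functors (and natural transformations between separately cocontinuous functors) are determined by their restriction to representables. So first I would recall the plain $k$-linear statement: since $\cC$ is small and $\cD$ cocomplete, restriction along $Y\colon\cC\to\PSh(\cC)$ is an equivalence from cocontinuous $k$-linear functors $\PSh(\cC)\to\cD$ to $k$-linear functors $\cC\to\cD$, a quasi-inverse being the pointwise left Kan extension $\mathrm{Lan}_Y$. This uses only that $Y$ is fully faithful and dense (every presheaf is a colimit of representables, \Cref{c:y-dense}/\Cref{c:equiv-psh}) and that colimits of presheaves are pointwise (\Cref{p:pointwise}); in particular $\mathrm{Lan}_Y F\circ Y\cong F$ for $k$-linear $F\colon\cC\to\cD$, and $\mathrm{Lan}_Y(G\circ Y)\cong G$ for cocontinuous $G$.

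Next I would promote $\mathrm{Lan}_Y$ to a functor between the stated categories of \emph{semigroupal} functors. Given a $k$-linear semigroupal functor $(F,\Phi)\colon\cC\to\cD$, set $\widetilde F\coloneqq\mathrm{Lan}_Y F$, which is cocontinuous and $k$-linear; the task is to equip it with a natural isomorphism $\widetilde\Phi_{P,Q}\colon\widetilde F(P)\otimes\widetilde F(Q)\xrightarrow{\;\cong\;}\widetilde F(P\oday Q)$. The key point is that, since $\oday$ commutes with colimits in each variable (\Cref{p:day-cocont}), $\otimes$ on $\cD$ commutes with colimits in each variable (hypothesis on $\cD$), and $\widetilde F$ is cocontinuous, both $\widetilde F(-)\otimes\widetilde F(-)$ and $\widetilde F(-\oday-)$ are separately cocontinuous in each slot. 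Hence a natural transformation between them is uniquely determined by, and freely prescribable on, pairs of representables. On representables we have $Y(X)\oday Y(Y)\cong Y(X\otimes Y)$ — the structure isomorphism of the semigroupal functor $Y$ from \Cref{p:day-cocont} — and $\widetilde F(Y(X))\cong F(X)$ (as $Y$ is fully faithful), so I define $\widetilde\Phi$ on $(Y(X),Y(Y))$ to be $\Phi_{X,Y}$ and extend. The hexagon axiom for $(\widetilde F,\widetilde\Phi)$ is an identity of natural transformations between functors of three variables, each separately cocontinuous, so it suffices to verify it on triples of representables, where it becomes exactly the hexagon axiom for $(F,\Phi)$ — using that the Day-convolution associator of $\PSh(\cC)$ restricts on representables to the associator of $\cC$.

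Finally I would check that restriction along $Y$ and $(F,\Phi)\mapsto(\widetilde F,\widetilde\Phi)$ are mutually quasi-inverse. Restriction of a cocontinuous semigroupal functor $G$ is semigroupal, being the composite of the semigroupal functor $Y$ (\Cref{p:day-cocont}) with $G$; on underlying $k$-linear functors the two constructions are inverse by the plain case; and the comparison isomorphisms $\mathrm{Lan}_Y F\circ Y\cong F$ and $\mathrm{Lan}_Y(G\circ Y)\cong G$ respect the semigroupal structures, again by the uniqueness-on-representables principle, since on representables they reduce to the given data. This yields the equivalence. For the last sentence, write $Y_\cC,Y_\cD$ for the two Yoneda embeddings; applying the equivalence to the $k$-linear semigroupal functor $Y_\cD\circ F\colon\cC\to\PSh(\cD)$ (a composite of semigroupal functors) produces the cocontinuous $k$-linear semigroupal functor $\PSh(F)\coloneqq\mathrm{Lan}_{Y_\cC}(Y_\cD\circ F)$.

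The step I expect to be the main obstacle is the coherence bookkeeping in the construction of $\widetilde\Phi$ and the verification of its quasi-inverse property: one must organize the "check on representables" arguments so that, in each instance, the relevant functor of several variables is known to be separately cocontinuous before density is invoked, and keep track of how the Day associator and structure isomorphisms translate to $\cC$ on representables. Alternatively, one can sidestep all of this by invoking \cite[Theorem~5.1]{kellyday}, or its non-unital variant \cite[Theorem~2.24]{matti}, and noting that the unit object plays no role in the argument.
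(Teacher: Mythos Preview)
The paper does not provide its own proof of this proposition: it is stated with the attribution ``see \cite[Theorem~5.1]{kellyday}'' and no \texttt{proof} environment follows. The paper treats it as a known result, citing Day--Kelly for the monoidal case and, just above (\Cref{p:day-cocont}), \cite[Theorem~2.24]{matti} for the non-unital variant.

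Your proposal is a correct and standard sketch of the argument underlying that citation: reduce to the unstructured free-cocompletion universal property via $\mathrm{Lan}_Y$, build the semigroupal structure on $\mathrm{Lan}_Y F$ by prescribing it on representables and extending by separate cocontinuity, and verify hexagons and quasi-inverse compatibility on representables by density. You correctly identify the only delicate point --- the coherence bookkeeping --- and you already note the alternative of simply invoking the cited references. Since the paper takes exactly that alternative, your write-up goes beyond what the paper does rather than diverging from it.
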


\begin{lemma} \label{l:preserves-relevant}
    Assume the situation of~\Cref{st:setup}.

    \begin{enumerate}
        \item Every $A \in \cC$ appears as a colimit of some diagram
            $\bDelta_A: \cI_A \to \cC_0$.
        \item For every $X \in \cC$, the functors $(X \otimes -)$, $(-
            \otimes X)$, and $F$ commute with colimits of diagrams~$\bDelta_A$.
    \end{enumerate}
\end{lemma}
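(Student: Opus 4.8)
The plan is to prove both claims using the fact that $\cC, \cD$ sit inside the strict free cocompletions $\widehat{\cC_0} \simeq \PSh(\cC_0)$ and $\widehat{\cD_0} \simeq \PSh(\cD_0)$, where objects are literally diagrams in $\cC_0$ and $\cD_0$ respectively. First I would observe that by \Cref{c:equiv-psh} together with \Cref{p:day-cocont}, the Yoneda embedding identifies $\PSh(\cC_0)$ with the category $\widehat{\cC_0}$ whose objects are small diagrams $\bDelta: \cI \to \cC_0$ and whose semigroupal product is computed by Day convolution; explicitly, an object $\bDelta$ corresponds to the presheaf $\colim_i Y(\bDelta(i))$, and since $\oday$ commutes with colimits in both variables, we have $\bGamma \oday \bDelta \cong \colim_{(i,j)} Y(\bGamma(i)) \oday Y(\bDelta(j)) \cong \colim_{(i,j)} Y(\bGamma(i) \otimes \bDelta(j))$, so that $\bGamma \oday \bDelta$ is (isomorphic to) the diagram $\cI \times \cJ \to \cC_0$, $(i,j) \mapsto \bGamma(i) \otimes \bDelta(j)$.

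For part (1): given $A \in \cC$, the fully faithful embedding $j_\cC: \cC \hookrightarrow \PSh(\cC_0)$ sends $A$ to a presheaf, which by \Cref{c:y-dense} (or directly by the equivalence with $\widehat{\cC_0}$) is isomorphic to a colimit $\colim_{i \in \cI_A} Y(\bDelta_A(i))$ of representables, i.e.\ to the image under $Y_{\cC_0}$ of a diagram $\bDelta_A: \cI_A \to \cC_0$. Since $j_\cC$ is fully faithful and the diagram of \Cref{st:setup} commutes (so $j_\cC \circ i_\cC = Y_{\cC_0}$), this exhibits $A$ as the colimit in $\cC$ of the diagram $i_\cC \circ \bDelta_A$, which is the required statement; I would just phrase it as ``$A$ appears as a colimit of a diagram $\bDelta_A: \cI_A \to \cC_0$'' with the embedding $i_\cC$ left implicit as per the Setup's convention.

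For part (2): fix $X \in \cC$ and a diagram $\bDelta_A: \cI_A \to \cC_0$ as in (1). Under the identification $\PSh(\cC_0) \simeq \widehat{\cC_0}$, the functor $X \otimes -$ on $\cC$ is the restriction of $\cX \oday -$ on $\PSh(\cC_0)$, where $\cX$ is the presheaf corresponding to $X$. By \Cref{p:day-cocont}, $\cX \oday -$ commutes with \emph{all} colimits (it has a right adjoint), so in particular with the colimit $\colim_i Y(\bDelta_A(i))$; since $\cC \subseteq \PSh(\cC_0)$ is a full subcategory and the relevant colimit of $\bDelta_A$ computed in $\cC$ agrees with the one computed in $\PSh(\cC_0)$ by (1), the functor $X \otimes -: \cC \to \cC$ commutes with colimits of diagrams $\bDelta_A$. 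The same argument applies to $- \otimes X$. For $F$, I would use that $\PSh(F_0): \PSh(\cC_0) \to \PSh(\cD_0)$ is cocontinuous by \Cref{p:free-semigroupal-cocompletion}, and $F$ is its restriction (since the square in \Cref{st:setup} commutes with $F$ and $\PSh(F_0)$), so $F$ likewise commutes with colimits of the diagrams $\bDelta_A$, the colimits again being computed the same way in $\cC$ and in $\PSh(\cC_0)$ thanks to full faithfulness of $i_\cC, j_\cC$.

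The main subtlety — and the one point I would be careful to state cleanly rather than wave at — is the compatibility of colimits between $\cC$ and $\PSh(\cC_0)$: a priori $\cC$ need not be cocomplete and a colimit computed inside $\cC$ could differ from the one computed in $\PSh(\cC_0)$. The resolution is that in (1) we \emph{define} the relevant colimit of $\bDelta_A$ to be the one whose value is $A$; since $A$ already lies in $\cC$ and $\cC \hookrightarrow \PSh(\cC_0)$ is fully faithful, this object satisfies the universal property of the colimit within $\cC$ precisely because it does in $\PSh(\cC_0)$ and all the objects of the diagram lie in $\cC$. So ``commutes with colimits of diagrams $\bDelta_A$'' should be read as: the cocone $(X \otimes \bDelta_A(i) \to X \otimes A)_i$ (resp.\ for $-\otimes X$ and $F$) is again a colimit cocone, which is exactly what the cocontinuity of the ambient functors on $\PSh(\cC_0)$ delivers. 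I expect the only real work is unwinding the Day-convolution description of $X \otimes -$ as a restriction of $\cX \oday -$ and citing the adjunction; everything else is bookkeeping with the commuting diagram of \Cref{st:setup}.
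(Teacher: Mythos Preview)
Your proposal is correct and follows essentially the same route as the paper: both arguments use \Cref{c:y-dense} to write $j_\cC(A)$ as a colimit of representables, invoke that fully faithful embeddings reflect colimits to transport this back into $\cC$, and then appeal to the cocontinuity of $\oday$ (\Cref{p:day-cocont}) and of $\PSh(F_0)$ (\Cref{p:free-semigroupal-cocompletion}) on the ambient presheaf categories. The paper phrases the key step more tersely as ``inclusions of full subcategories reflect colimits,'' which is exactly the subtlety you unwind at the end; your detour through $\widehat{\cC_0}$ is not needed here but does no harm.
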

\begin{proof}
    This is because inclusions of full subcategories reflect colimits. For
    item~(1), let $A \in \cC$. By~\Cref{c:y-dense}, there exists a diagram
    $\bDelta_A: \cI \to \cC_0$ such that
    \[
        j_\cC(A) \cong \colim Y\bDelta_A = \colim j_\cC i_\cC \bDelta_A.
    \]
    Since $j_\cC$ reflects colimits, we have $A \cong \colim i_\cC \bDelta_A$.

    For~(2), let $A \in \cC$. Because $\PSh(F_0)$ is cocontinuous, we have
    \[\begin{aligned}
        j_\cD F(A) &= \PSh(F_0) j_\cC(A) \cong \PSh(F_0)\left(\colim j_\cC i_\cC
        \bDelta_A\right) \\
        &\cong \colim \PSh(F_0) Y \Delta = \colim j_\cD F i_\cC \bDelta_A.
    \end{aligned}\]
    Since $j_\cD$ reflects colimits, we have $F(A) \cong \colim F i_\cC
    \bDelta_A$. Because the Day convolution is cocontinuous in both variables,
    an analogous argument shows that the functors $(X \otimes -)$ and $(-
    \otimes X)$ for $X \in \cC$ commute with colimits of diagrams~$\bDelta_A$.
\end{proof}

Recall that~\Cref{st:setup} induces the commutative triangle
\begin{equation} \tag{\ref{e:setup-triangle}}
\begin{tikzcd}
    C_\DY^\bullet(\PSh(F_0)) \arrow{rr} \arrow{rd} && C_\DY^\bullet(F)
    \arrow{ld} \\
    & C_\DY^\bullet(F_0).
\end{tikzcd}
\end{equation}

\begin{proposition} \label{p:injective}
    All morphisms in the triangle~(\ref{e:setup-triangle}) are injective.
\end{proposition}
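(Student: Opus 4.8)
First I would recall that the three arrows of~\eqref{e:setup-triangle} are the functoriality maps of~\Cref{ss:functoriality} associated with the horizontal squares extracted from~\Cref{st:setup}: the top two rows give $C_\DY^\bullet(F)\to C_\DY^\bullet(F_0)$, the bottom two rows give $C_\DY^\bullet(\PSh(F_0))\to C_\DY^\bullet(F)$, and the outer rectangle gives $C_\DY^\bullet(\PSh(F_0))\to C_\DY^\bullet(F_0)$; in each case the lower horizontal functor ($i_\cD$, $j_\cD$, $Y_{\cD_0}$) is fully faithful, as the construction requires. Unwinding the defining formula $\eta\mapsto(T^{-1}\eta_{S(X_1),\dots,S(X_n)})$, one sees that a cochain lies in the kernel of such a map precisely when it vanishes on every tuple of objects in the (essential image of the) source of the corresponding upper horizontal functor; concretely, the three kernels consist of the cochains vanishing on all tuples from $\cC_0\subseteq\cC$, from $\cC\subseteq\PSh(\cC_0)$, and from $\cC_0\subseteq\PSh(\cC_0)$, respectively. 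Since~\eqref{e:setup-triangle} commutes (\Cref{p:functoriality-square}), the map $C_\DY^\bullet(\PSh(F_0))\to C_\DY^\bullet(F)$ factors the composite $C_\DY^\bullet(\PSh(F_0))\to C_\DY^\bullet(F)\to C_\DY^\bullet(F_0)$, so it suffices to prove injectivity of the two maps landing in $C_\DY^\bullet(F_0)$.

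Both of these follow from the following colimit-rigidity principle, which I would isolate as a lemma. Let $G\colon\cA\to\cB$ be a $k$-linear semigroupal functor and $\cA_0\subseteq\cA$ a full semigroupal subcategory such that (i) every object of $\cA$ is isomorphic to a colimit of a diagram valued in $\cA_0$, and (ii) the functors $G$, $(X\otimes-)$ and $(-\otimes X)$ for $X\in\cA$ commute with colimits of diagrams valued in $\cA_0$ (whenever such colimits exist in $\cA$). Then any $\eta\in C_\DY^n(G)$ with $\eta_{A_1,\dots,A_n}=0$ for all $A_1,\dots,A_n\in\cA_0$ is zero. To prove this, fix $X_1,\dots,X_n\in\cA$, write $X_i\cong\colim_{j\in\cI_i}\bDelta_i(j)$ with $\bDelta_i\colon\cI_i\to\cA_0$ and cocone legs $g^i_j\colon\bDelta_i(j)\to X_i$, and pull all $n$ colimits out of the product $X_1\otimes\dots\otimes X_n$ one variable at a time, each step tensoring a colimit with a fixed object and using~(ii). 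Identifying the composed legs by functoriality of $\otimes$, this presents $X_1\otimes\dots\otimes X_n$ as $\colim_{(j_1,\dots,j_n)\in\cI_1\times\dots\times\cI_n}\bDelta_1(j_1)\otimes\dots\otimes\bDelta_n(j_n)$ with cocone legs $g^1_{j_1}\otimes\dots\otimes g^n_{j_n}$; the indexing diagram is valued in $\cA_0$ (closed under $\otimes$), so applying $G$ and using~(ii) once more presents $G(X_1\otimes\dots\otimes X_n)$ as a colimit with jointly epimorphic legs $\mu_{j_1,\dots,j_n}\coloneqq G(g^1_{j_1}\otimes\dots\otimes g^n_{j_n})$. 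Naturality of $\eta$ in each variable then yields $\eta_{X_1,\dots,X_n}\circ\mu_{j_1,\dots,j_n}=\mu_{j_1,\dots,j_n}\circ\eta_{\bDelta_1(j_1),\dots,\bDelta_n(j_n)}=0$ for every tuple, and the universal property forces $\eta_{X_1,\dots,X_n}=0$.

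Finally I would verify the hypotheses of this lemma in the two cases. For $C_\DY^\bullet(F)\to C_\DY^\bullet(F_0)$ I apply it with $G=F$ and $\cA_0=\cC_0\subseteq\cC$: condition~(i) is~\Cref{l:preserves-relevant}(1), and~(ii) is~\Cref{l:preserves-relevant}(2) — whose proof, relying only on cocontinuity of $\PSh(F_0)$, commutativity of the outer square, and the fact that $j_\cD$ reflects colimits, in fact establishes commutation with the colimit of \emph{every} diagram in $\cC_0$ that admits a colimit in $\cC$, which is exactly what the iteration above uses. For $C_\DY^\bullet(\PSh(F_0))\to C_\DY^\bullet(F_0)$ I apply it with $G=\PSh(F_0)$ and $\cA_0$ the full subcategory of representable presheaves, which is closed under Day convolution since $Y$ is semigroupal (\Cref{p:day-cocont}): condition~(i) is~\Cref{c:y-dense}, and~(ii) holds because $\oday$ is cocontinuous in each variable (\Cref{p:day-cocont}) and $\PSh(F_0)$ is cocontinuous (\Cref{p:free-semigroupal-cocompletion}). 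I expect the main obstacle to be the bookkeeping inside the lemma: one must pull \emph{all} the colimits out of the tensor product before applying $G$ — so that $G$ is applied to a diagram landing in $\cA_0$, where cocontinuity is guaranteed — and must check that the intermediate one-variable cocone legs compose precisely to $g^1_{j_1}\otimes\dots\otimes g^n_{j_n}$. A secondary point is that~\Cref{st:setup} is non-strict (the Day convolution on $\PSh(\cC_0)$ is not strict), so the DY cochains carry the padding operator of~\Cref{r:non-strict}; as that construction is invariant under semigroupal equivalence and all functors in~\Cref{st:setup} are fully faithful, I would first replace the whole diagram by a simultaneous strictification and then run the argument above.
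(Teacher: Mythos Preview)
Your proof is correct and follows essentially the same route as the paper: express each object as a colimit of objects from the smaller subcategory, use \Cref{l:preserves-relevant} to pull these colimits through $\otimes$ and $F$, and then invoke naturality of $\eta$ together with the universal property of the colimit to conclude that $\eta$ is determined by its values on the subcategory. The only organizational difference is that the paper observes the map $C_\DY^\bullet(\PSh(F_0))\to C_\DY^\bullet(F_0)$ is literally the special case of $C_\DY^\bullet(F)\to C_\DY^\bullet(F_0)$ obtained by taking $\cC=\PSh(\cC_0)$ and $F=\PSh(F_0)$, so only one application of the argument is needed; you instead isolate the argument as a clean lemma and apply it twice, which is arguably more transparent and also makes explicit a point the paper glosses over, namely that the commutation in \Cref{l:preserves-relevant}(2) is needed for \emph{arbitrary} $\cC_0$-valued diagrams (to run the iterated pull-out), not just the particular $\bDelta_A$ of part~(1).
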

\begin{proof}
    It suffices to prove that the morphism $C_\DY^\bullet(F) \to C_\DY^\bullet(F_0)$ is
    injective, since $C_\DY^\bullet(\PSh(F_0)) \to C_\DY^\bullet(F_0)$ is only
    a special case of this morphism and this in turn implies injectivity of the
    third morphism. Let $\eta \in C_\DY^n(F)$, where we assume for simplicity that
    $n = 2$. Let $A, B \in \cC$. By~\Cref{l:preserves-relevant}, we can write
    \[
        F(A \otimes B) = \colim_{i \in \cI_A, j \in \cI_B} F(\bDelta_A(i)
        \otimes \bDelta_B(j))
    \]
    for some diagrams $\bDelta_A: \cI_A \to \cC_0, \bDelta_B: \cI_B \to \cC_B$
    and the colimit comes with natural morphisms $F(\bDelta_A(i) \otimes
    \bDelta_B(j)) \to F(A \otimes B)$ for each $i \in \cI_A, j \in \cI_B$. By
    naturality of $\eta$, we have commutative squares
    \[\begin{tikzcd}[column sep=2cm]
        F(\bDelta_A(i) \otimes \bDelta_B(j))
        \arrow{r}{\eta_{\bDelta_A(i),\bDelta_B(j)}} \arrow{d} &
        F(\bDelta_A(i) \otimes \bDelta_B(j)) \arrow{d} \\
        F(A \otimes B) \arrow{r}{\eta_{A,B}} & F(A \otimes B)
    \end{tikzcd}\]
    and the universal property of the colimit implies that $\eta_{A,B}$ is
    uniquely determined by the $\eta_{\bDelta_A(i),\bDelta_B(j)}$. More
    generally, every cochain in $C_\DY^n(F)$ for some $n$ is determined by its
    values on objects in $\cC_0$.
\end{proof}

\subsection{Completion of the proof} In order to prove surjectivity
in~\eqref{e:setup-triangle}, we work with the strict version of the
cocompletion.

\begin{proposition}
    The category $\hat{\cC}$ can be equipped with a strict $k$-linear
    semigroupal structure extending the structure on $\cC \subseteq \hat{\cC}$.
\end{proposition}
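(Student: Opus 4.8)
The plan is to define the semigroupal product on $\hat{\cC}$ by a pointwise formula at the level of indexing diagrams -- the same idea that underlies Day convolution on $\PSh(\cC)$, but carried out concretely so that strictness is visible -- and then to read off strict associativity from strictness of $\cC$. (One could alternatively transport the Day convolution of \Cref{p:day-cocont} along the equivalence $L$ of \Cref{c:equiv-psh}, but that would only produce a non-strict structure and would not literally restrict to $\otimes_\cC$ on $\cC$; the direct construction avoids both issues and will also be what we need later to extend cochains.)

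First I would define $\otimes$ on objects: for diagrams $\bGamma\colon\cI\to\cC$ and $\bDelta\colon\cJ\to\cC$, set $\bGamma\otimes\bDelta$ to be the diagram $\cI\times\cJ\to\cC$, $(i,j)\mapsto\bGamma(i)\otimes\bDelta(j)$, with the evident action on morphisms of $\cI\times\cJ$. On morphisms of $\hat{\cC}$ I would use the explicit picture of \Cref{r:lim-colim}: a morphism $\alpha\colon\bGamma\to\bGamma'$ is a compatible family $(\alpha_i)_{i\in\cI}$ with $\alpha_i\in\colim_{i'}\Hom_\cC(\bGamma(i),\bGamma'(i'))$, and likewise $\beta=(\beta_j)_{j\in\cJ}$ for $\beta\colon\bDelta\to\bDelta'$. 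Define $\alpha\otimes\beta$ to be the family indexed by $(i,j)\in\cI\times\cJ$ whose component is the image of $\alpha_i\otimes\beta_j$ under the canonical map
\[
\Bigl(\colim_{i'}\Hom_\cC(\bGamma(i),\bGamma'(i'))\Bigr)\otimes_k\Bigl(\colim_{j'}\Hom_\cC(\bDelta(j),\bDelta'(j'))\Bigr)\;\longrightarrow\;\colim_{(i',j')}\Hom_\cC\bigl(\bGamma(i)\otimes\bDelta(j),\ \bGamma'(i')\otimes\bDelta'(j')\bigr),
\]
which exists because $\otimes_k$ on $\CVec_k$ preserves colimits in each variable and because $\otimes\colon\cC\times\cC\to\cC$, being $k$-linear in each slot, induces $k$-bilinear maps on hom-spaces. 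One then checks that this family lies in the limit over $\cI\times\cJ$ (from $\alpha,\beta$ lying in their limits together with $(\vfi\circ\bGamma(f))\otimes(\chi\circ\bDelta(g))=(\vfi\otimes\chi)\circ(\bGamma(f)\otimes\bDelta(g))$), that the interchange law $(\alpha'\otimes\beta')\circ(\alpha\otimes\beta)=(\alpha'\circ\alpha)\otimes(\beta'\circ\beta)$ holds, and that identities are preserved -- all reducing to bifunctoriality of $\otimes$ on $\cC$ and to composition in $\hat{\cC}$ being inherited from $\cC$. This gives a well-defined bifunctor $\otimes\colon\hat{\cC}\times\hat{\cC}\to\hat{\cC}$, and it is $k$-linear in each variable since the displayed map is assembled from $k$-(bi)linear data.

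It then remains to check strictness and that the structure extends $\cC$. Fixing the convention that an iterated product $\cI_1\times\cdots\times\cI_n$ is the category of $n$-tuples, both $(\bGamma\otimes\bDelta)\otimes\mathbf{E}$ and $\bGamma\otimes(\bDelta\otimes\mathbf{E})$ for a third diagram $\mathbf{E}\colon\mathcal{K}\to\cC$ are literally the diagram $\cI\times\cJ\times\mathcal{K}\to\cC$, $(i,j,k)\mapsto\bGamma(i)\otimes\bDelta(j)\otimes\mathbf{E}(k)$ -- using strictness of $\cC$ so that the three-fold tensor is unambiguous -- and the same on morphisms; hence the associativity constraint on $\hat{\cC}$ is the identity and the pentagon is trivial. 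Finally, under the identification of $\cC$ with the full subcategory of diagrams of shape $\{*\}$, for $A,B\in\cC$ the product $A\otimes_{\hat{\cC}}B$ is the shape-$(\{*\}\times\{*\})$ diagram with value $A\otimes_\cC B$, i.e.\ equals $A\otimes_\cC B$, and similarly on morphisms, so $\otimes_{\hat{\cC}}$ restricts to $\otimes_\cC$ and the inclusion $\cC\hookrightarrow\hat{\cC}$ is strictly semigroupal.

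The main obstacle is the middle step: verifying that the pointwise prescription genuinely descends to a well-defined morphism of $\hat{\cC}$, i.e.\ that $\otimes_k$ interacts correctly with the colimit quotients $R$ of \Cref{r:lim-colim} and that limit-compatibility is preserved under tensoring; once that is in place, functoriality, $k$-linearity, strict associativity, and the extension property are routine consequences of bifunctoriality and strictness of $\cC$.
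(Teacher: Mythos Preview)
Your proposal is correct and follows exactly the paper's approach: define $\bGamma\ohat\bDelta$ as the diagram $\cI\times\cJ\to\cC$, $(i,j)\mapsto\bGamma(i)\otimes\bDelta(j)$, and extend to morphisms via the lim-colim formula. The paper's proof is a two-line sketch that omits the verifications you spell out (well-definedness on morphisms, functoriality, the $\{*\}\times\{*\}\cong\{*\}$ and $(\cI\times\cJ)\times\cK=\cI\times(\cJ\times\cK)$ identifications needed for strictness), so your account is strictly more detailed but identical in substance.
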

\begin{proof}
    We define the semigroupal product of diagrams $\bGamma: \cI \to \cC, \bDelta: \cJ \to \cC$
    to be the diagram
    \[
        \bGamma \ohat \bDelta:  \cI \times \cJ \xrightarrow{\bGamma \times \bDelta} \cC \times
        \cC \xrightarrow{\otimes} \cC.
    \]
    The semigroupal product on $\cC$ then canonically extends to $\hat{\cC}$ on
    morphisms using the lim-colim-formula~\eqref{e:lim-colim}.
\end{proof}

\begin{proposition} \label{p:equiv-semigroupal}
    The functor $L: \hat{\cC} \to \PSh(\cC)$ from \Cref{p:ff-psh}
    can be endowed with a $k$-linear semigroupal structure, making it an
    equivalence of $k$-linear semigroupal categories.
\end{proposition}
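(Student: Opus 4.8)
The plan is to build the semigroupal structure on $L$ out of the cocontinuity of the Day convolution $\oday$ and the semigroupal structure $(Y, \Phi)$ on the Yoneda embedding (both from \Cref{p:day-cocont}), and then to reduce the hexagon axiom to the representable case.

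First I would construct the structure isomorphism. Given small diagrams $\bGamma: \cI \to \cC$ and $\bDelta: \cJ \to \cC$, we have $L(\bGamma) = \colim_i Y\bGamma(i)$ and $L(\bDelta) = \colim_j Y\bDelta(j)$. Since $\oday$ commutes with colimits in each variable, the canonical comparison maps identify $L(\bGamma) \oday L(\bDelta)$ with $\colim_{(i,j) \in \cI \times \cJ}\bigl(Y\bGamma(i) \oday Y\bDelta(j)\bigr)$, and composing with the colimit over $(i,j)$ of the structure isomorphisms $\Phi_{\bGamma(i), \bDelta(j)}: Y\bGamma(i) \oday Y\bDelta(j) \xrightarrow{\;\cong\;} Y(\bGamma(i) \otimes \bDelta(j))$ gives an isomorphism
\[
    \Psi_{\bGamma,\bDelta}: L(\bGamma) \oday L(\bDelta) \xrightarrow{\;\cong\;} \colim_{(i,j)} Y\bigl(\bGamma(i) \otimes \bDelta(j)\bigr) = L(\bGamma \ohat \bDelta).
\]
Every morphism here lives in $\PSh(\cC) = \PSh_{\CVec_k}(\cC)$, so $\Psi_{\bGamma,\bDelta}$ is $k$-linear, and it is natural in $\bGamma$ and $\bDelta$ because colimit-comparison maps are natural in their diagrams and $\Phi$ is a natural transformation.

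Next I would verify the hexagon axiom for $(L, \Psi)$. Fix small diagrams $\bGamma: \cI \to \cC$, $\bDelta: \cJ \to \cC$ and $\mathbf{\Theta}: \cT \to \cC$. Since $\ohat$ is strict, $L\bigl((\bGamma \ohat \bDelta) \ohat \mathbf{\Theta}\bigr)$ and $L\bigl(\bGamma \ohat (\bDelta \ohat \mathbf{\Theta})\bigr)$ are the single object $\colim_{(i,j,t)} Y\bigl(\bGamma(i) \otimes \bDelta(j) \otimes \mathbf{\Theta}(t)\bigr)$, so the hexagon amounts to the equality of two morphisms out of $(L\bGamma \oday L\bDelta) \oday L\mathbf{\Theta}$: one assembled from $\Psi_{\bGamma \ohat \bDelta, \mathbf{\Theta}} \circ (\Psi_{\bGamma, \bDelta} \oday \id)$, the other from $\Psi_{\bGamma, \bDelta \ohat \mathbf{\Theta}} \circ (\id \oday \Psi_{\bDelta, \mathbf{\Theta}}) \circ a^{\mathrm{Day}}$, where $a^{\mathrm{Day}}$ denotes the associator of the Day convolution. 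Using cocontinuity of $\oday$ once more, this common domain equals $\colim_{(i,j,t)}\bigl((Y\bGamma(i) \oday Y\bDelta(j)) \oday Y\mathbf{\Theta}(t)\bigr)$, so it suffices to check the equality after precomposing with each colimit coprojection. On such a coprojection, naturality of $a^{\mathrm{Day}}$ pushes the associator past it, and unwinding the definition of $\Psi$ reduces both legs to the two sides of the hexagon axiom for the semigroupal functor $(Y, \Phi)$ at the objects $\bGamma(i), \bDelta(j), \mathbf{\Theta}(t)$ (the associator of $\cC$ being trivial). This holds by \Cref{p:day-cocont}, so $(L, \Psi)$ is a $k$-linear semigroupal functor.

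Finally, $L$ is an equivalence of underlying categories by \Cref{c:equiv-psh} (fully faithfulness being \Cref{p:ff-psh} and essential surjectivity \Cref{c:y-dense}); since it is moreover semigroupal, it is an equivalence of $k$-linear semigroupal categories, any quasi-inverse acquiring a canonical semigroupal structure (cf.~\cite[\S~2.4]{EGNO}). The main obstacle I anticipate is the coherence bookkeeping in the hexagon step: one has to be certain that the isomorphisms coming from ``$\oday$ commutes with colimits'' are compatible with the associator $a^{\mathrm{Day}}$ of $\PSh(\cC)$, and the reduction to colimit coprojections is exactly what isolates this point, ultimately resting on the naturality of $a^{\mathrm{Day}}$ together with the already-established coherence of $(Y, \Phi)$.
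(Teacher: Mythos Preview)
Your proposal is correct and follows essentially the same approach as the paper: both construct $\Psi$ from the semigroupal structure $(Y,\Phi)$ on the Yoneda embedding together with the cocontinuity of $\oday$ in each variable. The paper in fact explicitly omits the hexagon verification, so your sketch of that step (reducing to colimit coprojections and invoking the hexagon for $(Y,\Phi)$) goes a bit beyond what the paper supplies; note also that the paper writes $\Psi$ in the direction $L(\bGamma \ohat \bDelta) \to L(\bGamma)\oday L(\bDelta)$, inverse to yours, but this is immaterial since it is an isomorphism.
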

\begin{proof}
    For diagrams $\bGamma: \cI \to \cC, \bDelta: \cJ \to \cC$, consider the
    natural isomorphism
    \[\begin{aligned}
        \Psi_{\bGamma,\bDelta}: L(\bGamma \ohat \bDelta) &= \colim_{i,j} Y(\bGamma(i) \otimes \bDelta(j)) \\
        &\cong \colim_{i,j} Y(\bGamma(i)) \oday Y(\bDelta(j)) \\
        &\cong \colim_i \left(\bGamma(i) \oday \colim_j \bDelta(j)\right) \\
        &\cong \left(\colim_i \bGamma(i)\right) \oday \left(\colim_j \bDelta(j)\right) =
        L(\bGamma) \oday L(\bDelta)
    \end{aligned}\]
    where the first isomorphism comes from the semigroupal structure
    of the Yoneda embedding and the other isomorphisms use the fact that $\oday$
    is cocontinuous in both variables. We omit checking the hexagon axiom for
    $\Psi$.
\end{proof}

Let $F: \cC \to \cD$ be a strict $k$-linear semigroupal functor
between strict semigroupal categories. In addition to $\PSh(F)$, it induces a
functor $\hat{F}: \hat{\cC} \to \hat{\cD}$ sending a diagram $\bDelta: \cI \to
\cC$ to the diagram $F \circ \bDelta: \cI \to \cD$. On morphisms, $\hat{F}$
extends $F$ via the lim-colim-formula \eqref{e:lim-colim}. By construction,
$\hat{F}$ is $k$-linear strict semigroupal and we have the following diagram:
\[\begin{tikzcd}
    & \hat{\cC} \arrow{r}{\hat{F}} \arrow[dd,"L","\cong"'] &[2cm] \hat{\cD} \arrow[dd,"L","\cong"'] \\
    \cC \arrow[hook]{ru} \arrow[swap]{rd}{Y} && \\
    & \PSh(\cC) \arrow[swap]{r}{\PSh(\cD)} & \PSh(\cD)
\end{tikzcd}\]
All paths from $\cC$ to $\PSh(\cD)$ are isomorphic semigroupal functors. By
\Cref{p:free-semigroupal-cocompletion}, $\PSh(F)$ and $L \hat{F} L^{-1}$ are
isomorphic as functors $\PSh(\cC) \to \PSh(\cD)$. In particular, $\hat{F}$ is
also cocontinuous. In the commutative diagram
\[\begin{tikzcd}
    \cC \arrow{r}{F} \arrow[hook]{d} \arrow[bend right=50,swap]{dd}{Y} &[1.3cm] \cD \arrow[hook]{d} \arrow[bend left=50]{dd}{Y} \\
    \hat{\cC} \arrow{r}{\hat{F}} \arrow{d}{L} & \hat{\cD} \arrow[swap]{d}{L} \\
    \PSh(\cC) \arrow{r}{\PSh(F)} & \PSh(\cD),
\end{tikzcd}\]
the three arrows on the right are fully faithful. By
\Cref{p:functoriality-square}, this induces a commutative triangle
\begin{equation} \label{e:triangle-psh-hat}
\begin{tikzcd}
    C_\DY^\bullet(\PSh(F)) \arrow{rr}{\cong} \arrow[hook]{rd} &&
    C_\DY^\bullet(\hat{F}) \arrow{ld}{\vfi} \\
    & C_\DY^\bullet(F),
\end{tikzcd}
\end{equation}
where the top arrow is an isomorphim, being induced by the equivalence $L$, and
the left arrow is injective by~\Cref{p:injective}.

\begin{lemma}
    For a $k$-linear strict semigroupal functor $F: \cC \to \cD$, the forgetful
    morphism $\vfi: C_\DY^\bullet(\hat{F}) \to C_\DY^\bullet(F)$
    in~\eqref{e:triangle-psh-hat} is an isomorphism.
\end{lemma}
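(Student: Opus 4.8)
The plan is to show that $\vfi$ is bijective in every degree $n \geqslant 1$; since $\vfi$ is already a morphism of complexes (being an instance of the construction in \Cref{p:functoriality-square}, applied to the square built from the fully faithful embeddings $\cC \hookrightarrow \hat{\cC}$ and $\cD \hookrightarrow \hat{\cD}$ together with $F$ and $\hat{F}$), this suffices. Unwinding that construction, $\vfi$ sends $\zeta \in C_\DY^n(\hat{F})$ to its restriction $(\zeta_{X_1,\dots,X_n})_{X_1,\dots,X_n\in\cC}$; here we use that $\hat{F}(X_1 \ohat \cdots \ohat X_n) = F(X_1 \otimes \cdots \otimes X_n)$ is a one-object diagram lying in $\cD \subseteq \hat{\cD}$ and that $\cD \hookrightarrow \hat{\cD}$ is fully faithful on endomorphisms, so the restriction does land in $C_\DY^n(F) = \End(F^{\otimes,n})$.

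For injectivity I would argue as in \Cref{p:injective}. Via the equivalence $L\colon \hat{\cC} \simeq \PSh(\cC)$ of \Cref{p:equiv-semigroupal}, every object $\bGamma\colon \cI \to \cC$ of $\hat{\cC}$ is the colimit $\colim_{i} \bGamma(i)$ of its components, viewed as one-object diagrams in $\cC \subseteq \hat{\cC}$; moreover $\ohat$ is cocontinuous in each variable (\Cref{p:equiv-semigroupal}, \Cref{p:day-cocont}) and $\hat{F}$ is cocontinuous (as recorded just above). Hence for $\bGamma_1,\dots,\bGamma_n \in \hat{\cC}$,
\[
    \hat{F}^{\otimes,n}(\bGamma_1,\dots,\bGamma_n) \;=\; \hat{F}(\bGamma_1 \ohat \cdots \ohat \bGamma_n) \;\cong\; \colim_{(i_1,\dots,i_n)} F\big(\bGamma_1(i_1) \otimes \cdots \otimes \bGamma_n(i_n)\big)
\]
with each $\bGamma_k(i_k) \in \cC$, and naturality forces $\zeta_{\bGamma_1,\dots,\bGamma_n}$ to be the unique endomorphism compatible with the components $\zeta_{\bGamma_1(i_1),\dots,\bGamma_n(i_n)}$ along this colimit cocone. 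Thus a cochain over $\hat{F}$ is determined by its values on objects of $\cC$, i.e.\ $\ker\vfi = 0$.

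For surjectivity I would extend a given $\eta \in C_\DY^n(F)$ directly. For diagrams $\bGamma_k\colon \cI_k \to \cC$, the object $\hat{F}^{\otimes,n}(\bGamma_1,\dots,\bGamma_n)$ is by definition the diagram $(i_1,\dots,i_n) \mapsto F(\bGamma_1(i_1) \otimes \cdots \otimes \bGamma_n(i_n))$ on $\cI_1 \times \cdots \times \cI_n$, and I set $\hat{\eta}_{\bGamma_1,\dots,\bGamma_n}$ to be the endomorphism of this diagram with component $\eta_{\bGamma_1(i_1),\dots,\bGamma_n(i_n)}$ at $(i_1,\dots,i_n)$. Using the explicit lim--colim description of $\Hom_{\hat{\cD}}$ from \Cref{r:lim-colim} and naturality of $\eta$ in each slot, one checks that these components assemble into a genuine morphism in $\hat{\cD}$ and that $\hat{\eta}$ is natural in $\bGamma_1,\dots,\bGamma_n$; taking all $\cI_k = \{*\}$ yields $\vfi(\hat{\eta}) = \eta$. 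Conceptually, since $\hat{\cD}$ is cocomplete, $\hat{F}^{\otimes,n}$ is cocontinuous in each variable, and $\cC$ generates $\hat{\cC}$ under colimits, restriction along $\cC^{\times n} \hookrightarrow \hat{\cC}^{\times n}$ identifies $\End(\hat{F}^{\otimes,n})$ with the endomorphisms of the restricted functor, which factors through $\cD \hookrightarrow \hat{\cD}$ and therefore has endomorphism space exactly $C_\DY^n(F)$; this identification is $\vfi$.

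The main obstacle is the componentwise verification in the surjectivity step: a morphism in $\hat{\cC}$ is not a levelwise natural transformation of diagrams but is encoded by the lim--colim formula, so one must track which component of the target diagram absorbs which component of the source before naturality of $\eta$ in $\cC$ and $\cD$ (together with functoriality of colimits) can be invoked. The conceptual alternative sidesteps this bookkeeping at the cost of appealing to the iterated, several-variable universal property of the free cocompletion in the spirit of \Cref{p:free-semigroupal-cocompletion}.
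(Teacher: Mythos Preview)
Your proposal is correct and matches the paper's approach: the surjectivity argument is exactly the componentwise extension $\hat{\eta}_{\bGamma_1,\dots,\bGamma_n} = (\eta_{\bGamma_1(i_1),\dots,\bGamma_n(i_n)})_{i_1,\dots,i_n}$, checked via the lim--colim description and naturality of $\eta$ (the paper spells this out for $n=2$). The only difference is that the paper does not reprove injectivity, since it already follows from the triangle~\eqref{e:triangle-psh-hat}: the top arrow is an isomorphism and the left arrow is injective by \Cref{p:injective}, forcing $\vfi$ to be injective as well---your direct injectivity argument simply rephrases the proof of \Cref{p:injective} for $\hat{F}$.
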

\begin{proof}
    It suffices to prove surjectivity. Given $\eta \in C^n_\DY(\cC)$, we want
    to extend it to an element of $\hat{\eta} \in C^n_\DY(\hat{\cC})$ using the
    description of morphisms in $\hat{\cC}$ given in \Cref{r:lim-colim}. To
    minimize notational overhead, we only prove the case $n = 2$. It will then
    be straightforward to generalize. For diagrams $\bGamma: \cI \to \cC,
    \bDelta: \cJ \to \cC$ and $i \in \cI, j \in \cJ$, we have a morphism
    \[
        \eta_{\bGamma(i),\bDelta(j)}: F(\bGamma(i) \otimes \bDelta(j))
        \longrightarrow F(\bGamma(i) \otimes \bDelta(j))
    \]
    in $\cD$. Define
    \[
        \hat{\eta}_{\bGamma,\bDelta} \coloneqq
        (\eta_{\bGamma(i),\bDelta(j)})_{i\in\cI,j\in\cJ}\quad:\quad
        \hat{F}(\bGamma \ohat \bDelta) \longrightarrow \hat{F}(\bGamma \ohat
        \bDelta)
    \]
    To see that this is a well-defined morphism in $\hat{\cD}$, let $f: i_1
    \to i_2, g: j_1 \to j_2$ be morphisms in $\cI$ and $\cJ$, respectively.
    Since $\eta \in C^2_\DY(\cC)$, the diagram
    \[\begin{tikzcd}[column sep=2.5cm]
        F(\bGamma(i_1) \otimes \bDelta(j_1)) \arrow{r}{F(\bGamma(f) \otimes \bDelta(g))}
        \arrow[swap]{d}{\eta_{\bGamma(i_1),\bDelta(j_1)}} & F(\bGamma(i_2) \otimes \bDelta(j_2))
        \arrow{d}{\eta_{\bGamma(i_2),\bDelta(j_2)}} \\
        F(\bGamma(i_1) \otimes \bDelta(j_1)) \arrow[swap]{r}{F(\bGamma(f) \otimes \bDelta(g))} & F(\bGamma(i_2)
        \otimes \bDelta(j_2))
    \end{tikzcd}\]
    commutes, so $\eta_{\bGamma(i_1),\bDelta(j_1)}$ is equivalent to $\eta_{\bGamma(i_2),\bDelta(j_2)}
    \,F(\bGamma(f) \otimes \bDelta(g))$ in the sense of \Cref{r:lim-colim}, as
    desired.

    To see that $\hat{\eta}_{\bGamma,\bDelta}$ is natural in $\bGamma$, consider a morphism
    $\vfi: \bGamma \to \bGamma'$ in $\hat{\cC}$, where $\bGamma': \cI' \to \cC$ is some
    diagram. We need the following diagram to commute:
    \[\begin{tikzcd}[column sep=2cm]
        \hat{F}(\bGamma \ohat \bDelta) \arrow{r}{\vfi \ohat \id_\bDelta}
        \arrow[swap]{d}{\hat{\eta}_{\bGamma,\bDelta}} & \hat{F}(\bGamma' \ohat \bDelta)
        \arrow{d}{\hat{\eta}_{\bGamma',\bDelta}} \\
        \hat{F}(\bGamma \ohat \bDelta) \arrow[swap]{r}{\vfi \ohat \id_\bDelta} &
        \hat{F}(\bGamma' \ohat \bDelta)
    \end{tikzcd}\]
    This can be checked component-wise, so let $(i, j) \in \cI \times \cJ$: The
    component of $\vfi$ at $i$ is some morphism $\vfi_i: \bGamma(i) \to \bGamma'(i')$, and
    the diagram
    \[\begin{tikzcd}[column sep=2cm,row sep=1.0cm]
        F(\bGamma(i) \otimes \bDelta(j)) \arrow{r}{\vfi_i \otimes \id_{\bDelta(j)}}
        \arrow[swap]{d}{\eta_{\bGamma(i),\bDelta(j)}} & F(\bGamma'(i') \otimes
        \bDelta(j))
        \arrow{d}{\eta_{\bGamma'(i'),\bDelta(j)}} \\
        F(\bGamma(i) \otimes \bDelta(j)) \arrow[swap]{r}{\vfi_i \otimes \id_{\bDelta(j)}}
        & F(\bGamma'(i') \otimes \bDelta(j))
    \end{tikzcd}\]
    commutes.
\end{proof}

\begin{corollary} \label{c:iso-forget-psh}
    For a $k$-linear strict semigroupal functor $F: \cC \to \cD$, the forgetful
    morphism of complexes $C_\DY^\bullet(\PSh(F)) \to C_\DY^\bullet(F)$ is an
    isomorphism.
\end{corollary}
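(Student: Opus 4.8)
The plan is to read the statement straight off the commutative triangle~\eqref{e:triangle-psh-hat}, which was assembled for exactly this purpose. First I would note that the forgetful morphism $C_\DY^\bullet(\PSh(F)) \to C_\DY^\bullet(F)$ appearing in the statement is precisely the hooked left edge of that triangle: it is the morphism induced via \Cref{p:functoriality-square} from the outer commutative square with horizontals the fully faithful embeddings $\cC \hookrightarrow \PSh(\cC)$, $\cD \hookrightarrow \PSh(\cD)$ and verticals $F$, $\PSh(F)$. So it suffices to show the other two edges of the triangle are isomorphisms and then invoke commutativity.

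The top edge $C_\DY^\bullet(\PSh(F)) \to C_\DY^\bullet(\hat F)$ is induced by the semigroupal equivalence $L$ of \Cref{p:equiv-semigroupal}; since an equivalence of $k$-linear semigroupal categories is in particular fully faithful on endomorphisms and induces an isomorphism of DY complexes (\Cref{r:non-strict}), this edge is an isomorphism. The right edge $\vfi\colon C_\DY^\bullet(\hat F) \to C_\DY^\bullet(F)$ is an isomorphism by the Lemma just proved -- and this is where all the real content sits, namely in the explicit extension of an $n$-cochain for $F$ to one for $\hat F$ using the lim-colim description of morphisms in $\hat\cC$.

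Finally, commutativity of~\eqref{e:triangle-psh-hat} exhibits the forgetful morphism as the composite of the top isomorphism with $\vfi$, hence it is itself an isomorphism. I do not expect any genuine obstacle at this stage: the corollary is a purely formal consequence of the preceding Lemma together with the semigroupal equivalence $\hat\cC \simeq \PSh(\cC)$ of \Cref{p:equiv-semigroupal}. The only point deserving a moment's care is the bookkeeping that identifies the three arrows of the triangle with the forgetful morphism, the map induced by $L$, and the map $\vfi$ respectively, and that confirms they compose as drawn -- but this is exactly what the functoriality established in \Cref{p:functoriality-square} guarantees.
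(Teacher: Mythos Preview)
Your proposal is correct and follows exactly the approach the paper intends: the corollary is stated without proof because it is meant to be read off the triangle~\eqref{e:triangle-psh-hat} precisely as you describe, combining the preceding Lemma (which shows $\vfi$ is an isomorphism) with the fact that the top edge is induced by the equivalence $L$.
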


We can now prove the main theorem of this section.

\begin{proof}[{Proof of \Cref{t:colimit-invariant}}]
    We may assume that $F$ is strict because strictification is functorial.
    Recall that we have the triangle
    \begin{equation} \tag{\ref{e:setup-triangle}}
    \begin{tikzcd}
        C_\DY^\bullet(\PSh(F_0)) \arrow{rr} \arrow{rd} && C_\DY^\bullet(F)
        \arrow{ld} \\
        & C_\DY^\bullet(F_0).
    \end{tikzcd}
    \end{equation}
    By \Cref{p:injective}, each arrow is a monomorphism. By
    \Cref{c:iso-forget-psh}, the left arrow is an isomorphism, so all three are
    isomorphisms.
\end{proof}

\subsection{Applications} \label{ss:applications}

Let us collect a few consequences of~\Cref{t:colimit-invariant}.

\begin{example}
    The functor $F$ in \Cref{st:setup} may be understood to be a completion
    of $F_0$ with respect to certain types of (co)limits: A $k$-linear semigroupal
    functor $F: \cC \to \cD$ extends canonically to the additive completion
    $F^\oplus: \cC^\oplus \to \cD^\oplus$, the pseudo-abelian envelope $F^\ps:
    \cC^\ps \to \cD^\ps$ and the Ind-completion $\Ind(F): \Ind(\cC) \to
    \Ind(\cD)$. By \Cref{t:colimit-invariant}, the Davydov-Yetter cohomology of these
    functors coincides with $C_\DY^\bullet(F)$.
\end{example}

An important special case is $F = \id_\cC$:

\begin{corollary} \label{c:sandwich}
    Consider $k$-linear semigroupal subcategories $\cC_0 \subseteq \cC \subseteq
    \PSh(\cC_0)$. The inclusion $\cC_0 \subseteq \cC$ induces an isomorphism
    $C_\DY^\bullet(\cC) \to C_\DY^\bullet(\cC_0)$.
\end{corollary}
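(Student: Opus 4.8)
The plan is to obtain this as the special case of \Cref{t:colimit-invariant} in which $F_0$ and $F$ are identity functors. Take $\cD_0 \coloneqq \cC_0$, $\cD \coloneqq \cC$, $F_0 \coloneqq \id_{\cC_0}$, $F \coloneqq \id_\cC$, and let $i_\cC = i_\cD \colon \cC_0 \hookrightarrow \cC$ and $j_\cC = j_\cD \colon \cC \hookrightarrow \PSh(\cC_0)$ be the given full $k$-linear semigroupal subcategory inclusions. First I would check that this data fits into \Cref{st:setup}. The inclusions are fully faithful by hypothesis. The hypothesis ``$\cC_0 \subseteq \cC \subseteq \PSh(\cC_0)$'', interpreted with the Day convolution semigroupal structure of \Cref{p:day-cocont}, says precisely that the composite $\cC_0 \hookrightarrow \cC \hookrightarrow \PSh(\cC_0)$ is the Yoneda embedding $Y_{\cC_0}$ as a $k$-linear semigroupal functor, which is the ``tall'' vertical arrow required by \Cref{st:setup}. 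Finally $\PSh(F_0) = \PSh(\id_{\cC_0})$ is the identity functor of $\PSh(\cC_0)$: the functor $\id_{\PSh(\cC_0)}$ is cocontinuous and restricts along $Y$ to $\id_{\cC_0}$, so by the uniqueness part of \Cref{p:free-semigroupal-cocompletion} it coincides with $\PSh(\id_{\cC_0})$. The remaining squares of \Cref{st:setup} commute trivially. (If $\cC$ is not strict, I would replace it beforehand by a semigroupal strictification, under which both $C_\DY^\bullet$ and the inclusion of $\cC_0$ are invariant.)

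With the setup in place, \Cref{t:colimit-invariant} applies and its triangle~\eqref{e:setup-triangle} specializes to a triangle with vertices $C_\DY^\bullet(\PSh(\cC_0))$, $C_\DY^\bullet(\cC)$ and $C_\DY^\bullet(\cC_0)$, all of whose morphisms are isomorphisms. In particular the morphism $C_\DY^\bullet(\cC) \to C_\DY^\bullet(\cC_0)$ --- which, by the construction of~\eqref{e:setup-triangle}, is the morphism $\vfi_\square$ attached to the square with $S = T = i_\cC$ and $F = G = \id$ --- is an isomorphism.

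It then remains to identify this $\vfi_\square$ with the morphism in the statement. By the explicit formula for $\vfi_\square$ recalled in \Cref{ss:functoriality}, and since $T = i_\cC$ is a full inclusion (so that the maps $T^{-1}$ on endomorphism spaces are identities), $\vfi_\square$ sends $\eta \in C_\DY^n(\cC)$ to the restricted cochain $(\eta_{X_1,\dots,X_n})_{X_1,\dots,X_n \in \cC_0}$; this is exactly the forgetful morphism induced by $\cC_0 \hookrightarrow \cC$ described in \Cref{c:forgetful}. There is no genuine obstacle here beyond bookkeeping: the only point requiring a moment's care is to confirm that the semigroupal data implicit in ``$\cC_0 \subseteq \cC \subseteq \PSh(\cC_0)$'' is exactly that demanded by \Cref{st:setup} and that $\PSh(\id) = \id$ as $k$-linear semigroupal functors.
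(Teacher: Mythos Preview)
Your proposal is correct and matches the paper's approach: the paper states this corollary immediately after the sentence ``An important special case is $F = \id_\cC$'' and gives no further proof, so specializing \Cref{t:colimit-invariant} to $F_0 = \id_{\cC_0}$, $F = \id_\cC$ is exactly what is intended. Your additional bookkeeping (verifying $\PSh(\id_{\cC_0}) \cong \id_{\PSh(\cC_0)}$ via \Cref{p:free-semigroupal-cocompletion} and identifying the resulting morphism with the forgetful map of \Cref{c:forgetful}) simply makes explicit what the paper leaves implicit.
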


\begin{example} \label{x:enough-projectives}
    Let $\cC$ be an abelian $k$-linear semigroupal category with enough
    projectives. Let $\cP$ be the full subcategory of projective objects in
    $\cC$, which is also $k$-linear semigroupal. Every object $X \in \cC$
    has a projective resolution
    \[
        \cdots \to P_1 \to P_0 \to X \to 0
    \]
    and thus appears as the cokernel of $P_1 \to P_0$, i.e.~as a colimit of
    objects in $\cP$ in a canonical way. Consequently, the Yoneda embedding
    $\cP \hookrightarrow \PSh(\cP)$ factors as $\cP \subseteq \cC \subseteq
    \PSh(\cP)$, and \Cref{c:sandwich} yields an isomorphism of complexes
    $C_\DY^\bullet(\cC) \to C_\DY^\bullet(\cP)$. In other words, in order to
    compute the DY cohomology of $\cC$, it suffices to consider only the
    projective objects.
\end{example}

 Here is the final application of \Cref{t:colimit-invariant} presented here.
 For pseudo-tensor categories, a universal embedding into a tensor category
 (which is abelian by definition) need not exist. If it does, we speak of the
 \textit{abelian envelope}. We show that in many cases, passing to the abelian
 envelope does not change the Davydov-Yetter cohomology.

\begin{definition}[see {\cite{EAHS}}]
    Let $\cC$ be a pseudo-tensor category over a field $k$. A $k$-linear
    monoidal functor $F: \cC \to \cA$, where $\cA$ is a $k$-linear tensor
    category, is called a \textbf{(monoidal) abelian envelope} of $\cC$ if it
    satisfies the following universal property: For any $k$-linear tensor
    category $\cT$, precomposition with $F$ defines an equivalence
    \[
        \Fun_\otimes^\ex(\cA, \cT) \to \Fun_\otimes^\faith(\cC, \cT), \qquad G
        \mapsto G \circ F
    \]
    between the category $\Fun_\otimes^\ex(\cA, \cT)$ of exact $k$-linear
    monoidal functors and the category $\Fun_\otimes^\faith(\cC, \cT)$ of
    faithful $k$-linear monoidal functors.
\end{definition}

For a field $k$ and a pseudo-tensor category $\cC$, Coulembier
\cite{Cou} gives an existence criterion for an abelian envelope of $\cC$. He
defines a full $k$-linear monoidal subcategory $\Sh(\cC) \subseteq \PSh(\cC)$
of \textit{sheaves} on $\cC$ and the notion of a \textit{strongly faithful
object} in $\cC$ (for precise definitions, see \cite[Definition 2.2.2 and
\S~3.1.2]{Cou}).

\begin{proposition}[Existence Theorem] \label{p:abenv-existence}
    If every morphism in $\cC$ is split by some strongly faithful object $X$,
    then $\cC$ admits an abelian envelope $\cC \hookrightarrow \cA$. In this
    case we have a commutative (up to isomorphism) diagram of pseudo-tensor
    categories
    \[ \begin{tikzcd}
        \Sh(\cC) \arrow{r}{\cong} & \Ind(\cA) \\
        \cC \arrow[hook]{u} \arrow[hook]{r} & \cA. \arrow[hook]{u}
    \end{tikzcd}\]
\end{proposition}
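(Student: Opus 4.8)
This is Coulembier's theorem \cite{Cou}, so the plan is to recall the shape of the argument rather than carry it out; the point is that $\cA$ is to be realized inside the sheaf category $\Sh(\cC)\subseteq\PSh(\cC)$. First I would record that sheafification $a\colon\PSh(\cC)\to\Sh(\cC)$ for Coulembier's topology is exact and that the Day convolution of \Cref{p:day-cocont} descends along $a$ to a closed $k$-linear semigroupal structure on $\Sh(\cC)$; together with the fact that every representable presheaf is already a sheaf — which is exactly where the splitting hypothesis is used — this makes the composite $\cC\hookrightarrow\PSh(\cC)\xrightarrow{a}\Sh(\cC)$ a fully faithful monoidal functor into a Grothendieck abelian $k$-linear monoidal category.

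Next I would set $\cA\subseteq\Sh(\cC)$ to be the full subcategory of ``finitely presented'' sheaves, i.e.\ those arising as the cokernel of a morphism between finite direct sums of objects of $\cC$, and prove the three points that form the core of the statement: (i) $\cA$ is closed under subobjects and quotients in $\Sh(\cC)$, hence is abelian; (ii) $\oday$ restricts to $\cA$, and the duals that exist in $\cC$ (since $\cC$ is pseudo-tensor, hence rigid) remain duals in $\cA$, so $\cA$ is rigid symmetric; (iii) $\End_{\cA}(\one)=k$. The hypothesis that every morphism of $\cC$ is split by a strongly faithful object is what keeps the embedding $\cC\hookrightarrow\cA$ faithful and the unit rigid after sheafification: tensoring a nonzero morphism with a strongly faithful object keeps it nonzero, and this property survives sheafification. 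This produces a $k$-linear tensor category $\cA$ with a faithful $k$-linear monoidal functor $\cC\hookrightarrow\cA$.

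For the displayed square I would show that $\Sh(\cC)$ is locally finitely presentable with finitely presentable part exactly $\cA$: by \Cref{c:equiv-psh} and the explicit colimit presentation of presheaves used in its proof, the objects of $\cC$, hence those of $\cA$, generate $\Sh(\cC)$ under filtered colimits, so $\Sh(\cC)\cong\Ind(\cA)$ compatibly with the monoidal structures; commutativity with $\cC$ in the corner is then immediate. For the universal property, given a faithful $k$-linear monoidal $G\colon\cC\to\cT$ into a tensor category, \Cref{p:free-semigroupal-cocompletion} applied with target $\Ind(\cT)$ extends $G$ to a cocontinuous monoidal functor $\PSh(\cC)\to\Ind(\cT)$; one checks it inverts the local morphisms of the topology (again using faithfulness of $G$ together with the splitting hypothesis), so it descends to $\Sh(\cC)\to\Ind(\cT)$, and it sends finitely presented sheaves to finite-length objects, hence restricts to an exact monoidal functor $\cA\to\cT$; uniqueness holds because $\cA$ is generated by the image of $\cC$.

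I expect steps (i)--(iii) to be the real obstacle: showing that the finitely presented sheaves form a rigid abelian monoidal subcategory with $\End(\one)=k$ requires precise control of how sheafification interacts with the tensor product and with duality, and it is exactly here that the strong-faithfulness condition does all the work — this is the technical heart of \cite{Cou}.
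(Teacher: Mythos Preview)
The paper's proof is a single sentence: it cites \cite[Theorem~4.1.1]{Cou} and nothing more. You correctly identify the result as Coulembier's and cite it, so in that sense your proposal agrees with the paper's approach; the extended sketch you give of the construction of $\cA$ inside $\Sh(\cC)$, the verification that it is a tensor category, and the derivation of the universal property is all additional exposition that the paper does not attempt. Since the statement is quoted purely as background input to \Cref{t:abelian-envelope}, the paper is content to treat it as a black box, whereas you have chosen to unpack it.
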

\begin{proof}
    This is \cite[Theorem~4.1.1]{Cou}.
\end{proof}

The existence theorem provides enough information about the abelian envelope to
apply the invariance theorem and obtain an invariance result for abelian
envelopes.

\begin{theorem} \label{t:abelian-envelope}
    In the setting of the Existence Theorem~(\Cref{p:abenv-existence}), we
    have an isomorphism $C^\bullet_\DY(\cA) \cong C^\bullet_\DY(\cC)$ induced
    by the embedding $\cC \hookrightarrow \cA$.
\end{theorem}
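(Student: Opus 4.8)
The plan is to reduce \Cref{t:abelian-envelope} to \Cref{t:colimit-invariant} (equivalently to \Cref{c:sandwich}) by exhibiting $\cC$ and $\cA$ as $k$-linear semigroupal subcategories sandwiched between a small subcategory and its presheaf category. The Existence Theorem~(\Cref{p:abenv-existence}) gives us exactly the tool we need: a commutative square of pseudo-tensor categories
\[\begin{tikzcd}
    \Sh(\cC) \arrow{r}{\cong} & \Ind(\cA) \\
    \cC \arrow[hook]{u} \arrow[hook]{r} & \cA \arrow[hook]{u}
\end{tikzcd}\]
with $\Sh(\cC)$ a full $k$-linear monoidal subcategory of $\PSh(\cC)$. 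First I would observe that $\cC$ is essentially small (it is a pseudo-tensor category, hence essentially small by assumption in our conventions), so it has a small skeleton; replacing $\cC$ by a small strict skeletal model changes neither $\cA$, $\Sh(\cC)$, nor the DY complexes, since all constructions involved are invariant under semigroupal equivalence (cf.~\Cref{r:non-strict} and the discussion of strictification). From now on $\cC$ is small.

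The key step is to set $\cC_0 \coloneqq \cC$ and to produce inclusions of $k$-linear semigroupal categories $\cC \subseteq \cA \subseteq \PSh(\cC)$. The first inclusion $\cC \hookrightarrow \cA$ is the abelian envelope functor, which is fully faithful (being faithful and, by the universal property applied to the identity, essentially the unit of an adjunction onto a full subcategory; in any case Coulembier's construction makes it a full embedding). For the second inclusion, compose $\cA \hookrightarrow \Ind(\cA) \cong \Sh(\cC) \subseteq \PSh(\cC)$; each of these is fully faithful, the middle one by \Cref{p:abenv-existence}, and the composite is $k$-linear semigroupal because all the functors in sight are (the equivalence $\Sh(\cC)\cong\Ind(\cA)$ is one of pseudo-tensor categories, and $\Ind$ and $\Sh$ carry their Day-type semigroupal structures compatibly with the Yoneda embeddings). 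Crucially, I must check that the composite $\cC \hookrightarrow \cA \hookrightarrow \PSh(\cC)$ is isomorphic to the Yoneda embedding $Y_\cC$ as a semigroupal functor: this is precisely the commutativity of the square in \Cref{p:abenv-existence} together with the fact that $\cC \hookrightarrow \Ind(\cC) \hookrightarrow \PSh(\cC)$ is Yoneda and $\Ind(\cA)\cong\Sh(\cC)$ restricts to the identity on $\cC$. Hence we are exactly in the situation of \Cref{st:setup} with $\cC_0 = \cC$, $\cD_0 = \cC$, $F_0 = \id_\cC$, middle row $\id_\cA$, and bottom row $\id_{\PSh(\cC)}$, and \Cref{c:sandwich} yields the isomorphism $C^\bullet_\DY(\cA) \cong C^\bullet_\DY(\cC)$ induced by $\cC \hookrightarrow \cA$.

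I expect the main obstacle to be bookkeeping rather than mathematical depth: one must verify that the equivalence $\Sh(\cC)\cong\Ind(\cA)$ supplied by \cite[Theorem~4.1.1]{Cou} is an equivalence of \emph{semigroupal} categories (not merely of abelian categories) and that under it the two embeddings of $\cC$ — namely $\cC\hookrightarrow\cA\hookrightarrow\Ind(\cA)$ and $\cC\hookrightarrow\Sh(\cC)\hookrightarrow\PSh(\cC)=Y_\cC$ — agree up to coherent semigroupal natural isomorphism. Coulembier's sheafification is built to be monoidal and to extend the monoidal structure on $\cC$, so this compatibility holds, but it requires carefully citing the relevant parts of \cite{Cou} (the monoidality of $\Sh(-)$ and of the localisation functor $\PSh(\cC)\to\Sh(\cC)$). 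Once that is in place, \Cref{c:sandwich} applies verbatim and the naturality of the isomorphism in $\cC \hookrightarrow \cA$ is automatic from \Cref{p:functoriality-square}. A minor point to address is that \Cref{st:setup} is phrased for $\PSh(\cC_0)$ of \emph{all} presheaves while Coulembier works inside $\Sh(\cC)$; but since $\Sh(\cC)$ is a full semigroupal subcategory of $\PSh(\cC)$ containing $\cA$, enlarging the ambient category from $\Sh(\cC)$ to $\PSh(\cC)$ only makes the sandwich $\cC\subseteq\cA\subseteq\PSh(\cC)$ easier to satisfy, so no difficulty arises there.
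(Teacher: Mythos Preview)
Your argument is correct, but it organizes the reduction to \Cref{c:sandwich} differently from the paper. The paper does not embed $\cA$ into $\PSh(\cC)$; instead it keeps the commutative square of \Cref{p:abenv-existence} and applies functoriality (\Cref{p:functoriality-square}) to obtain a commutative square
\[\begin{tikzcd}
    C_\DY^\bullet(\Sh(\cC)) \arrow{d} & C_\DY^\bullet(\Ind(\cA)) \arrow[swap]{l}{\cong} \arrow{d} \\
    C_\DY^\bullet(\cC)  & C_\DY^\bullet(\cA), \arrow{l}
\end{tikzcd}\]
and then invokes \Cref{c:sandwich} \emph{twice}: once for $\cC \subseteq \Sh(\cC) \subseteq \PSh(\cC)$ to see that the left arrow is an isomorphism, and once for $\cA \subseteq \Ind(\cA) \subseteq \PSh(\cA)$ to see that the right arrow is an isomorphism; the bottom arrow is then forced to be an isomorphism. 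Your route is more economical (a single application of \Cref{c:sandwich}) but purchases this by transporting $\cA$ through the equivalence $\Ind(\cA)\cong\Sh(\cC)$, which obliges you to check that the resulting composite $\cC\hookrightarrow\cA\hookrightarrow\PSh(\cC)$ agrees semigroupally with the Yoneda embedding---exactly the bookkeeping you flag. The paper's route sidesteps that identification: each of its two sandwiches uses the ``native'' Yoneda embedding of the base category, and the only compatibility needed is the commutativity already asserted in \Cref{p:abenv-existence}. Both arguments are equally valid and of comparable length.
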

\begin{proof}
    By the Existence theorem, we have an induced commutative square
    \[\begin{tikzcd}
        C_\DY^\bullet(\Sh(\cC)) \arrow{d} & C_\DY^\bullet(\Ind(\cA)) \arrow[swap]{l}{\cong} \arrow{d} \\
        C_\DY^\bullet(\cC)  & C_\DY^\bullet(\cA). \arrow{l}
    \end{tikzcd}\]
    Applying \Cref{c:sandwich} to the chains of subcategories
    $\cC \subseteq \Sh(\cC) \subseteq \PSh(\cC)$ and $\cA \subseteq \Ind(\cA)
    \subseteq \PSh(\cA)$, we see that the left and right arrow are
    isomorphisms. Thus the bottom arrow is an isomorphism.
\end{proof}

    \bibliographystyle{alphaurl}
\bibliography{refs}

@book{kasselqg,
    author = {Kassel, Christian},
    title = {Quantum Groups},
    year = {2012},
    isbn = {1461269008},
    publisher = {Springer Publishing Company, Incorporated}
}

@article{semigroupal,
    AUTHOR = {Alexei Davydov},
    TITLE = {{Nuclei of categories with tensor products}},
    JOURNAL = {Theory Appl. Categ.},
    VOLUME = {18},
    YEAR = {2007},
    number={16},
    PAGES = {440--472},
    ISSN = {1201-561X},
    eprint={0708.2761},
    archivePrefix={arXiv},
    primaryClass={math.CT},
}

@article{hochschild,
    author = {Gerhard Hochschild},
    JOURNAL = {Ann. of Math.},
    number = {3},
    pages = {568--579},
    publisher = {[Annals of Mathematics, Trustees of Princeton University on Behalf of the Annals of Mathematics, Mathematics Department, Princeton University]},
    title = {{On the Cohomology Theory for Associative Algebras}},
    volume = {47},
    year = {1946},
    doi={https://doi.org/10.2307/1969093}
}

@article{Davydov2022DeformationCO,
    title={{Deformation cohomology of Schur–Weyl categories}},
    author={Alexei Davydov and Mohamed Elbehiry},
    journal={Selecta Mathematica},
    year={2022},
    volume={29},
}

@book{EGNO,
    title={{Tensor categories}},
    publisher={Am.~Math.~Soc.},
    series={Mathematical Surveys and Monographs 205},
    author={Pavel Etingof and Shlomo Gelaki and Dmitri Nikshych and Victor Ostrik},
    year={2015},
    url={https://www-math.mit.edu/~etingof/egnobookfinal.pdf}
}

@misc{davydov1997twisting,
    title={{Twisting of monoidal structures}},
    author={Alexei Davydov},
    year={1997},
    eprint={q-alg/9703001},
    archivePrefix={arXiv},
    primaryClass={id='q-alg' full_name='Quantum Algebra and Topology' is_active=False alt_name='math.QA' in_archive='q-alg' is_general=False description=None}
}

@misc{yetter1997,
    title={{Braided Deformations of Monoidal Categories and {V}assiliev Invariants}},
    author={David Yetter},
    year={1997},
    eprint={q-alg/9710010},
    archivePrefix={arXiv},
    primaryClass={q-alg},
}

@misc{craneyetter1996,
    title={{Deformations of (Bi)tensor Categories}},
    author={Louis Crane and David Yetter},
    year={1996},
    eprint={q-alg/9612011},
    archivePrefix={arXiv},
    primaryClass={q-alg},
}

@phdthesis{DayThesis,
    title={{Construction of Biclosed Categories}},
    school={School of Mathematics at the University of New South Wales},
    year={1970},
    author={Brian Day},
    doi={10.26190/unsworks/8048}
}

@misc{davydovcenter,
    title={Field theories with defects and the centre functor},
    author={Alexei Davydov and Liang Kong and Ingo Runkel},
    year={2011},
    eprint={1107.0495},
    archivePrefix={arXiv},
    primaryClass={math.QA},
}

@article{BGP,
    author={Erwan Beurier and Dominique Pastor and René Guitart},
    title={{Presentations of clusters and strict free-cocompletions}},
    year = {2021},
    journal = {Theory and Applications of Categories},
    volume = {36},
    number = {17},
    pages = {492--513},
    url={http://www.tac.mta.ca/tac/volumes/36/17/36-17.pdf}
}

@book{kelly,
    author = {Max Kelly},
    isbn = {0521287022},
    publisher = {Cambridge University Press},
    series = {London Mathematical Society lecture note series, 64},
    title ={ {Basic concepts of enriched category theory}},
    year = {1982},
}

@book{Riehl_2014,
    place={Cambridge},
    series={New Mathematical Monographs},
    title={{Categorical Homotopy Theory}},
    publisher={Cambridge University Press},
    author={Emily Riehl},
    year={2014},
    collection={New Mathematical Monographs},
    doi={https://doi.org/10.1017/CBO9781107261457}
}

@book{Loregian_2021,
    title={{(Co)end Calculus}},
    ISBN={9781108746120},
    DOI={10.1017/9781108778657},
    publisher={Cambridge University Press},
    author={Loregian, Fosco},
    year={2021},
    month=jun
}

@article{kellyday,
    title = {A universal property of the convolution monoidal structure},
    journal = {Journal of Pure and Applied Algebra},
    volume = {43},
    number = {1},
    pages = {75-88},
    year = {1986},
    issn = {0022-4049},
    doi = {10.1016/0022-4049(86)90005-8},
    author = {Geun Bin IM and Max Kelly}
}

@article{Cou,
    title={{Monoidal abelian envelopes}},
    volume={157},
    ISSN={1570-5846},
    eprint={2003.10105},
    archivePrefix={arXiv},
    primaryClass={math.CT},
    DOI={10.1112/s0010437x21007399},
    number={7},
    journal={Compositio Mathematica},
    publisher={Wiley},
    author={Coulembier, Kevin},
    year={2021},
    month=jun,
    pages={1584–1609}
}

@article{EAHS,
    title={{Deligne Categories and the Limit of Categories $\mathrm{Rep}(\mathrm{GL}(m|n))$}},
    volume={2020},
    ISSN={1687-0247},
    eprint={1511.07699},
    archivePrefix={arXiv},
    primaryClass={math.RT},
    DOI={10.1093/imrn/rny144},
    number={15},
    journal={International Mathematics Research Notices},
    publisher={Oxford University Press (OUP)},
    author={Inna Entova-Aizenbud and Vladimir Hinich and Vera Serganova},
    year={2018},
    month=jun, pages={4602–4666}
}

@article{matti,
    author = {Stroiński, Mateusz},
    title = {Identity in the Presence of Adjunction},
    journal = {International Mathematics Research Notices},
    volume = {2024},
    number = {18},
    pages = {12711-12745},
    year = {2024},
    month = {08},
    doi = {10.1093/imrn/rnae166},
    eprint={2401.09892},
    archivePrefix={arXiv},
    primaryClass={math.CT},
}

@mastersthesis{ma,
    author = "Mader, Peter",
    title  = "Invariance Properties of Davydov-Yetter Cohomology",
    school = "Rheinische Friedrich-Wilhelms-Universität Bonn",
    year   = "2024",
    month = "12"
}

\end{document}